\newtheorem{theorem}{Theorem}[section] 
\newtheorem{lemma}[theorem]{Lemma}     
\newtheorem{corollary}[theorem]{Corollary}
\newtheorem{proposition}[theorem]{Proposition}
\title[Quaternion Derivatives: The GHR Calculus]
 {Quaternion Derivatives: The GHR Calculus} 
\author{Dongpo Xu, Cyrus Jahanchahi, Clive C. Took and Danilo P. Mandic}
\begin{document}
\maketitle

\begin{abstract}
Quaternion derivatives in the mathematical literature are typically defined only
for analytic (regular) functions. However, in engineering problems, functions of interest are often real-valued and thus not analytic, such as the standard cost function. The HR calculus is a convenient way to calculate formal derivatives of both analytic and non-analytic functions of quaternion variables, however, both the HR and other functional calculus in quaternion analysis have encountered an essential technical obstacle, that is, the traditional product rule is invalid due to the non-commutativity of the quaternion algebra. To address this issue, a generalized form of the HR derivative is proposed based on a general orthogonal system. The so introduced generalization, called the generalized HR (GHR) calculus, encompasses not just the left- and right-hand versions of quaternion derivative, but also enables solutions to some long standing problems, such as the novel product rule, the chain rule, the mean-valued theorem and Taylor's theorem. At the core of the proposed approach is the quaternion rotation, which can naturally be applied to other functional calculi in non-commutative settings. Examples on using the GHR calculus in adaptive signal processing support the analysis.
\end{abstract}


\section{Introduction.}
Quaternions have become a standard in physics \cite{Girard}, computer graphics \cite{Hanson}, and have also been successfully applied to
many signal processing and communications problems  \cite{Took09,Took10a,Took10b,Khong13,Bihan,Bihan06,Bihan14,Buchholz,Sangwine12,Sangwine11,Miron}.
One attractive property is that quaternion algebra \cite{Hamilton} reduces the number of parameters, and offers improvements in terms of computational complexity \cite{Neto} and functional simplicity \cite{Jahanchahi13}. Often, the task is to find the values of quaternion parameters which optimize a chosen objective function. To solve this kind of optimization problems, a common approach is to build adaptive optimization algorithms based on the gradient of the objective function, as in the quaternion least mean square (QLMS) adaptive filter \cite{Took09}. However, a confusing aspect of QLMS adaptive filtering, and other gradient-based optimization procedures,
is that the objective functions of interest are often real-valued and thus not analytic according to the analyticity condition in quaternion analysis \cite{Sudbery,Deavours,Khalek,Fueter34,Fueter35,Leo}. An alternative way to find the derivative of real functions of quaternion variables is therefore needed. Following the idea of the CR calculus in the complex domain \cite{Hjorungnes07,Brandwood,Kreutz,Wirtinger}, two alternative ways can be used to find the derivative of a real function $f$ with respect to the unknown quaternion variable $q$. The first way, called the pseudo-derivative, rewrites $f$ as a function of the four real components $q_a,q_b,q_c$ and $q_d$ of the quaternion variable $q$, and then find the real derivatives
of the so rewritten function with respect to the independent real variables, $q_a,q_b,q_c$ and $q_d$, separately. In this way, we can treat the real-valued function $f$ as a real differentiable mapping between $\mathbb{R}^4$ and $\mathbb{R}$. The second way, called the HR calculus, is more elegant \cite{Jahanchahi10,Mandic11}, and aims to find the formal derivatives of $f$ with respect to the quaternion variables $q,q^i,q^j,q^k$ and their conjugates. The differentials of these quaternion variables are independent, and shown in Lemma \ref{lm:dpindp} and Lemma \ref{lm:dpconjindp}.

In this paper,  motivated by the CR calculus \cite{Hjorungnes11,Mandic09,Kreutz}, we revisit the theory of HR calculus introduced by three of the authors \cite{Mandic11}, and further extend this theory by developing the product rule and chain rule for the HR calculus. However, as shown in Section \ref{sec:limithr}, the traditional product rule is not suitable for
the HR calculus due to the non-commutativity of quaternion algebra. Other functional calculi \cite{Sudbery,Deavours,Gentili06,Khalek,Leo} in quaternion analysis are similarly suffering from this barrier.

To this end, we firstly generalize the HR calculus based on a general orthogonal system. The generalized HR (GHR) calculus encompasses not just the left- and right-hand versions of quaternion derivative, we also show that for the two versions of the HR derivative, their results are identical for real-valued functions. One major result is therefore that using the GHR calculus, it is no longer important which version of the HR derivative is used. Also, within the GHR framework, we introduce a novel product rule to facilitate the calculation of the HR derivatives of general functions of quaternion variables, and show that
if one function of the product is  real-valued, this novel product rule degenerates into the traditional product rule shown in Corollary in \ref{cor:leftprodrl}. The core of the novel product rule is the quaternion rotation, this idea can be also naturally applied to other functional calculus in non-commutative settings. In the process of refining the theory of HR calculus, we revisit two important and fundamental theorems: the mean value theorem and Taylor's theorem. Taylor's theorem is presented in a compact and familiar form involving the HR derivatives. The GHR calculus poses an answer to an long-standing mathematical problem \cite{Schwartz}, while illustrative examples show how it can be applied as an important tool for solving problems
in signal processing and communications.

\subsection{Quaternion Algebra.}
Quaternions are an associative but not commutative algebra
over $\mathbb{R}$, defined as
\begin{equation}
\mathbb{H}=\textrm{span}\{1,i,j,k\}\triangleq \{q_a+iq_b+jq_c+kq_d\;|\;q_a,q_b,q_c,q_d\in\mathbb{R}\}
\end{equation}
where $\{1,i,j,k\}$ is a basis of $\mathbb{H}$, and the imaginary units $i,j$ and $k$ satisfy $i^2=j^2=k^2=ijk=-1$, which implies $ij=k=-ji$, $jk=i=-kj$, $ki=j=-ik$. For any quaternion
\begin{equation}\label{eq:qqabcd}
q=q_a+iq_b+jq_c+kq_d=Sq+Vq
\end{equation}
the scalar (real) part is denoted by $q_a=Sq=\mathfrak{R}(q)$, whereas the vector part $Vq=\mathfrak{I}(q)=iq_b+jq_c+kq_d$ comprises the three imaginary parts. The quaternion
product for $p,q \in \mathbb{H}$ is given by
\begin{equation}
pq=SpSq-Vp \cdot Vq+SpVq+SqVp+Vp\times Vq
\end{equation}
where the symbols $'\cdot'$ and $'\times'$ denote the usual inner product and vector product, respectively.
The presence of the vector product causes the quaternion product to be noncommutative, i.e., for $p,q\in\mathbb{H}$, $pq\neq qp$ in general. The conjugate of a quaternion $q$ is defined as $q^*=Sq-Vq$, while the conjugate of the product satisfies $(pq)^*=q^*p^*$. The modulus of a quaternion is defined as $|q|=\sqrt{qq^*}=\sqrt{q_a^2+q_b^2+q_c^2+q_d^2}$, and it is
easy to check that $|pq|=|p||q|$. The inverse of a quaternion $q\neq 0$ is $q^{-1}=q^*/|q|^2$ which yields an important consequence
\begin{equation}
(pq)^{-1}=\frac{(pq)^*}{|pq|^2}=\frac{q^*p^*}{|q|^2|p|^2}=\frac{q^*}{|q|^2}\frac{p^*}{|p|^2}=q^{-1}p^{-1}
\end{equation}
(note the change in order).

If $|q| = 1$, we call $q$ a \textit{unit} quaternion. A quaternion $q$ is said to be \textit{pure} if $\mathfrak{R}(q)=0$. Then
$q^*=-q$ and $q^2=-|q|^2$. Thus, a \textit{pure unit} quaternion is a square root of -1, such as imaginary units $i,j$ and $k$.

\subsection{Analytic Functions in $\mathbb{H}$.} A function that is analytic is also called regular, or monogenic. Due to the non-commutativity of quaternion products, there are two ways to write the quotient in the definition of quaternion derivative, as shown below.
\begin{proposition}[\cite{Gentili09}]\label{pr:tradefideri}
Let $D\subseteq \mathbb{H}$ be a simply-connected domain of definition of the function $f:D\rightarrow \mathbb{H}$. If for any $q\in D$
\begin{equation}\label{eq:leftlimderiv}
\lim_{h\rightarrow 0}[\left(f(q+h)-f(q)\right)h^{-1}]
\end{equation}
exists in $\mathbb{H}$, then necessarily $f(q)=\omega q +\lambda$ for some $\omega,\lambda \in \mathbb{H}$. If for any $q\in D$
\begin{equation}\label{eq:rtlimderiv}
\lim_{h\rightarrow 0}[h^{-1}\left(f(q+h)-f(q)\right)]
\end{equation}
exists in $\mathbb{H}$, then necessarily $f(q)=q \nu +\lambda$ for some $\nu,\lambda \in \mathbb{H}$.
\end{proposition}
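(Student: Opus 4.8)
The plan is to show that a one-sided difference quotient can converge at every point only when $f$ is affine, by translating the limit into a rigid condition on the real differential of $f$ and then exploiting the multiplication table $ij=-ji$.

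\emph{Reformulation.} Suppose the left limit in \eqref{eq:leftlimderiv} exists at each $q$ and denote its value by $g(q)$. Multiplying the quotient by $h$ gives $f(q+h)-f(q)=g(q)h+o(|h|)$, so $f$ is real-differentiable with differential $h\mapsto g(q)h$. Writing $f_a,f_b,f_c,f_d$ for the partial derivatives of $f$ with respect to $q_a,q_b,q_c,q_d$ and evaluating this differential along the coordinate directions $h=1,i,j,k$, I obtain
\[
f_a=g,\qquad f_b=gi,\qquad f_c=gj,\qquad f_d=gk .
\]
Thus $g=f_a$, and the last three identities form a Cauchy--Riemann--Fueter-type system.

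\emph{Constancy of $g$.} This is the crux, and here noncommutativity does the work. From $g=f_a$ and equality of the mixed partials $f_{ab}=f_{ba}$, $f_{ac}=f_{ca}$, $f_{ad}=f_{da}$, I get $g_b=g_a i$, $g_c=g_a j$, $g_d=g_a k$, where $g_x$ denotes $\partial g/\partial q_x$. Computing $f_{cb}=g_c i$ and $f_{bc}=g_b j$ and equating them yields $g_a(ji-ij)=0$, i.e. $-2\,g_a k=0$, so $g_a=0$; the three relations then force $g_b=g_c=g_d=0$. Hence $g$ is constant, say $g\equiv\omega$, on the connected domain $D$. Since $\omega q$ has the same four partial derivatives $\omega,\omega i,\omega j,\omega k$, the difference $f-\omega q$ has vanishing differential and is therefore a constant $\lambda$, giving $f(q)=\omega q+\lambda$. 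The right-handed case \eqref{eq:rtlimderiv} is identical \emph{mutatis mutandis}: one finds $f_a=g$, $f_b=ig$, $f_c=jg$, $f_d=kg$, and the same mixed-partial comparison gives $(ij-ji)g_a=0$, hence $g\equiv\nu$ and $f(q)=q\nu+\lambda$.

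\emph{Main obstacle.} The existence of the limit furnishes only first-order differentiability, so a priori $g=f_a$ need not be $C^1$, whereas the interchange of partial derivatives above needs $f\in C^2$. I would close this gap in two steps: first, applying the first-order expansion twice, along $h_1$ and then $h_2$, and comparing with the expansion along $h_1+h_2$ gives $(g(q+h_1)-g(q))h_2=o(|h_1|)+o(|h_2|)$, which forces $g$ to be continuous, so $f\in C^1$; second, the overdetermined constant-coefficient first-order system $f_b=gi$, $f_c=gj$, $f_d=gk$ is elliptic, so its $C^1$ solutions are in fact smooth (indeed real-analytic), legitimizing every interchange of derivatives. Alternatively one may simply invoke the regularity established in \cite{Gentili09}. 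I expect this regularity step to be the genuinely delicate point, because the constancy of $g$ is a second-order effect of $ij=-ji$ that is invisible at first order.
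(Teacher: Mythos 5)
First, a point of comparison: the paper itself contains no proof of this statement — Proposition \ref{pr:tradefideri} is quoted from \cite{Gentili09}, and the text immediately afterwards defers to \cite{Sudbery,Luna} for the details — so your attempt can only be measured against the classical argument in those references. Measured that way, your main line is precisely the classical one (Mejlikhzhon's theorem, which Sudbery presents in exterior-calculus form: $df=g\,dq$ forces $0=dg\wedge dq$, which is exactly your mixed-partials computation written in coordinates). Your reformulation is correct: existence of the limit in \eqref{eq:leftlimderiv} gives Fr\'echet differentiability with differential $h\mapsto g(q)h$, hence $f_a=g$, $f_b=gi$, $f_c=gj$, $f_d=gk$; equality of mixed partials together with $ij\neq ji$ then forces $g_a=g_b=g_c=g_d=0$, so $g\equiv\omega$ on the connected set $D$ and $f-\omega q$ has vanishing differential, hence is constant. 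The signs check out, and the right-handed case is indeed the mirror image. All of this is valid whenever $f$ is smooth enough for the second partials to exist and commute, e.g. $f\in C^2$.

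The genuine gap is the regularity step, and your proposed repair does not work. Writing $R_p(h)$ for the remainder at base point $p$, your three expansions give $(g(q+h_1)-g(q))h_2=R_q(h_1+h_2)-R_q(h_1)-R_{q+h_1}(h_2)$, and the last term is $o(|h_2|)$ only for each \emph{fixed} base point $q+h_1$: the hypothesis supplies no uniformity in $h_1$, so with $|h_2|\sim|h_1|\to 0$ the quotient $R_{q+h_1}(h_2)/|h_2|$ is uncontrolled. In fact the inference cannot be valid as stated, because it uses nothing quaternionic: the identical formal identity holds for any everywhere-differentiable $f:\mathbb{R}\to\mathbb{R}$, and $f(x)=x^2\sin(1/x)$, $f(0)=0$, shows that pointwise differentiability does not force continuity of the derivative (in the complex case the conclusion is true, but only via Goursat's contour argument, never via a pointwise expansion). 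This also undercuts your second step: hypoellipticity of the overdetermined system (whose symbol is indeed injective) applies to distributional solutions, but without continuity, or at least local integrability, of $g=f_a$ you cannot identify the pointwise partials with distributional ones, so there is nothing for elliptic regularity to act on. The honest ways to close the argument are (a) to assume $f\in C^1$ — harmless in the context of this paper, whose calculus assumes continuous real partial derivatives throughout, and essentially what the classical treatments do — or (b) to import a genuinely Goursat-type regularity theorem, e.g. by splitting $f=F_1+F_2j$ over a complex slice, where \eqref{eq:leftlimderiv} makes $F_1$ holomorphic and $F_2$ anti-holomorphic on each plane parallel to that slice. Your fallback of simply citing \cite{Gentili09} for regularity is legitimate, but it concedes that the self-contained proof is incomplete at exactly the point you flagged as delicate; flagging the hole is not the same as filling it.
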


Proposition \ref{pr:tradefideri} is discussed in detail in \cite{Sudbery,Luna} and indicates that the traditional definitions of derivative in \eqref{eq:leftlimderiv} and \eqref{eq:rtlimderiv} are too restrictive. One attempt to relax this constraint is due to Feuter \cite{Fueter34,Fueter35}, summarized in \cite{Sudbery,Deavours}. Feuter's analyticity condition is termed the Cauchy-Riemann-Fueter (CRF) equation, given by

\begin{equation}\label{eq:crf}
\begin{split}
{\rm Left\;CRF:}\quad \frac{\partial f}{\partial q_a}+\frac{\partial f}{\partial q_b}i+\frac{\partial f}{\partial q_c}j+\frac{\partial f}{\partial q_d}k=0\\
{\rm Right\;CRF:}\quad \frac{\partial f}{\partial q_a}+i\frac{\partial f}{\partial q_b}+j\frac{\partial f}{\partial q_c}+k\frac{\partial f}{\partial q_d}=0
\end{split}
\end{equation}
The limitations of the CRF condition were pointed out by Gentili and Struppa in \cite{Gentili06,Gentili07}, illuminating that the polynomial functions (even the identity $f(q)=q$) satisfy neither the left CRF nor the right CRF. To further relax the analyticity condition, a \textit{local}  analyticity condition (LAC) was proposed in \cite{Leo}, by using the polar form of a quaternion to give
\begin{equation}\label{eq:lac}
\begin{split}
{\rm Left\;LAC:}\quad \frac{\partial f}{\partial q_a}+\left(q_b\frac{\partial f}{\partial q_a}+q_c\frac{\partial f}{\partial q_c}+q_d\frac{\partial f}{\partial q_d}\right)\frac{V_q}{|V_q|^2}=0 \\
{\rm Right\;LAC:}\quad \frac{\partial f}{\partial q_a}+\frac{V_q}{|V_q|^2}\left(q_b\frac{\partial f}{\partial q_a}+q_c\frac{\partial f}{\partial q_c}+q_d\frac{\partial f}{\partial q_d}\right)=0
\end{split}
\end{equation}
where $q =q_a+\hat{q}\alpha$, $\hat{q}=\frac{V_q}{|V_q|}$ and $V_q=iq_b+jq_c+kq_d$. This theory of local analyticity is now very well developed, and in many different directions, and we refer the reader to \cite{Colombo09,Gentili06,Gentili07} for
the slice regular functions. More recent work in this area includes \cite{Colombo,Gentili09,Gentili}, and references therein.
The advantage of the local analyticity condition is that both the polynomial functions of $q$ and some elementary functions in Section \ref{sec:elemfuns} satisfy the left LAC or the right LAC.

\begin{remark}
Note that the product and composition of two LAC functions $f$ and $g$ generally no longer meet the local analytic condition.
For example, if $f(q)=q$ and $g(q)=\omega q$, $\omega\in\mathbb{H}$, then $f$ and $g$ satisfy the left LAC,
but the product $fg=q\omega q$ does not satisfy the left LAC. It is the same situation for the right LAC,  only we need to write the function $g$ as $g(q)= q \omega$.
\end{remark}

The quaternion derivative in quaternion analysis is defined only
for analytic functions. However, in engineering problems, objective functions of interest are often real-valued to minimize or maximize them
and thus not analytic, such as
\begin{equation}
f(q)=|q|^2=qq^*
\end{equation}
Notice that if the definition of the analytic (regular) function given in \cite{Sudbery,Deavours,Colombo09,Gentili06,Gentili07,Khalek,Leo} is used, then the
function $f$ is not analytic.  In order to take the derivative of these functions (but not limited to only such functions), the HR calculus extends the classical idea of complex CR calculus \cite{Hjorungnes07,Brandwood,Kreutz,Wirtinger} to the quaternion field \cite{Mandic11}. This generalization is not trivial, and we show that many rules of the CR and HR calculus are different. The details are given in Section \ref{sec:hr}.

\begin{remark}
It is important to note that the left (right) terminology in \eqref{eq:crf}, \eqref{eq:lac} and below differ from those in \cite{Sudbery,Deavours,Colombo09,Gentili06,Khalek,Leo}. In this paper, the standard of left (right) is based on the position of $\frac{\partial f}{\partial q_a}$, $\frac{\partial f}{\partial q_b}$, $\frac{\partial f}{\partial q_c}$ and $\frac{\partial f}{\partial q_d}$, rather than on the positions of imaginary units $i,j,k$. Although
this is only a notational difference, we later show that
the left derivatives (named based on this standard) in Definition \ref{def:lefthr} and \ref{def:leftghr} result in a left constant rule \eqref{pr:lefthrpr2} and \eqref{pr:leftghrpr1}, that is, the \textit{left} constant can come out from the \textit{left} derivative of product, and the left derivatives stand on the left side of the quaternion differential in \eqref{eq:case1dfdqcmp} and \eqref{eq:case1bdfdqcmp}. This allows for a consistent use of terminology.
\end{remark}

\subsection{Quaternion Rotation.}
Every quaternion can be written in the \textit{polar} form
\begin{equation}
q=Sq+Vq =|q|\left(\frac{Sq}{|q|}+\frac{Vq}{|Vq|}\frac{|Vq|}{|q|}\right)=|q|(\cos \theta + \hat{q} \sin \theta )
\end{equation}
where $\hat{q}=Vq/|Vq|$ is a pure unit quaternion and $\theta=\arccos(S_q/|q|)$ is the angle (or argument)
of the quaternion. We now introduce the rotation and involution operations.
\begin{definition}[(Quaternion Rotation {\cite{Ward}})]\label{def:qrot}
For any quaternion number q, consider the transformation
\begin{equation*}
\phi_{\mu}(q)\equiv q^{\mu}\triangleq \mu q \mu^{-1}
\end{equation*}
where $\mu=|\mu|(\cos \theta + \hat{\mu} \sin \theta )\in \mathbb{H}$. This transformation geometrically describes a 3-dimensional rotation of the vector part of q by the angle $2\theta$ about to the vector part of $\mu$.
\end{definition}

The properties of the operator in Definition \ref{def:qrot} (see \cite{Bihan06,Buchholz}) are:
\begin{equation}\label{pr:qrotpr1}
\mu q^{(\mu^*)}=\mu \mu^* q (\mu^*)^{-1}=|\mu|^2q \frac{\mu}{|\mu|^2}=q\mu,\quad   \forall \mu \in\mathbb{H}
\end{equation}
\begin{equation}
q^{(\mu^{-1})}=\mu^{-1} q \mu=\frac{\mu^*q \mu}{|\mu|^2} =\mu^* q (\mu^*)^{-1}=q^{(\mu^*)},\quad    \forall \mu \in\mathbb{H}
\end{equation}
\begin{equation}\label{pr:pqmu}
(pq)^{\mu}=\mu pq \mu^{-1}=\mu p \mu^{-1}\mu q \mu^{-1}=p^{\mu}q^{\mu},\quad   \forall p, q\in\mathbb{H}
\end{equation}
\begin{equation}\label{pr:def1qmunu}
q^{\mu \nu}=({\mu}{\nu})q({\mu}{\nu})^{-1}= {\mu}({\nu}q{\nu}^{-1} ) {\mu}^{-1}=(q^{\nu} )^{\mu},\quad
 \forall \nu, \mu \in\mathbb{H}
\end{equation}
\begin{equation}
q^{\mu*} \equiv (q^*)^{\mu}=\mu q^* \mu^{-1}=(\mu q \mu^{-1})^*=(q^{\mu})^*\equiv q^{*\mu},\quad   \forall q,\mu\in\mathbb{H}
\end{equation}
(note the change in order and $q^{\mu*}\neq q^{(\mu^*)}$).

\begin{definition}[(Quaternion Involution {\cite{Ell}})]\label{def:qinv}
The involution of a
quaternion over a pure unit quaternion $\eta$ is given by
\begin{equation*}
\phi_{\eta}(q)\equiv q^{\eta}= \eta q \eta^{-1}=\eta q \eta^*=-\eta q \eta
\end{equation*}
and represents a rotation by $\pi$ about $\eta$.
\end{definition}

The properties of quaternion involution are:
\begin{equation}
(q^{\eta})^{\eta}=q^{\eta\eta}=q^{(-1)}=q,\quad   \forall q\in\mathbb{H}
\end{equation}
\begin{equation}
q^{(\eta^{*})}=q^{-\eta}=(-\eta)q(-\eta)^{-1}=q^{\eta},\quad   \forall q\in\mathbb{H}
\end{equation}

\begin{definition}[(Quaternion Reflection {\cite{Ward}})]
The reflection of a
quaternion about a pure unit quaternion $\eta$ is given by
\begin{equation*}
\psi_{\eta}(q)\equiv\phi_{\eta}(-q)=\eta q \eta
\end{equation*}
and the reflection is performed in the plane whose normal is $\eta$.
\end{definition}

We should point out that the real representation in (1) can be
easily extended to other orthogonal bases. In particular, for any non-zero quaternion $\mu=\mu_a+i\mu_b+j\mu_c+k\mu_d$,
we consider an orthogonal system $\{1,i^{\mu},j^{\mu},k^{\mu} \}$ given by \cite{Ward}
\begin{equation}
\begin{split}
&i^{\mu}=\frac{1}{|\mu|^2}\left((\mu_a^2+\mu_b^2-\mu_c^2-\mu_d^2 )i+(2\mu_a \mu_d+2\mu_b \mu_c )j+(2\mu_b \mu_d -2\mu_a \mu_c)k\right)\\
&j^{\mu}=\frac{1}{|\mu|^2}\left((2\mu_b \mu_c -2\mu_a \mu_d)i+(\mu_a^2+\mu_c^2-\mu_b^2-\mu_d^2 )j+(2\mu_c \mu_d+2\mu_a \mu_b )k\right)\\
&k^{\mu}=\frac{1}{|\mu|^2}\left((2\mu_a \mu_c+2\mu_b \mu_d )i+(2\mu_c \mu_d -2\mu_a \mu_b)j+(\mu_a^2+\mu_d^2-\mu_b^2-\mu_c^2 )k\right)
\end{split}
\end{equation}
so that the matrix representation of the map $(\cdot)^{\mu}$ becomes
\begin{equation}
{\bf M}=\frac{1}{|\mu|^2}\left(\begin{array}{ccc}
\mu_a^2+\mu_b^2-\mu_c^2-\mu_d^2  & 2\mu_a \mu_d+2\mu_b \mu_c  & 2\mu_b \mu_d -2\mu_a \mu_c \\
2\mu_b \mu_c -2\mu_a \mu_d & \mu_a^2+\mu_c^2-\mu_b^2-\mu_d^2  & 2\mu_c \mu_d+2\mu_a \mu_b\\
2\mu_a \mu_c+2\mu_b \mu_d & 2\mu_c \mu_d -2\mu_a \mu_b  & \mu_a^2+\mu_d^2-\mu_b^2-\mu_c^2
\end{array}\right)
\end{equation}
It is easily shown that ${\bf M}$ is orthogonal: ${\bf M}{\bf M}^T={\bf I}_3$ and ${\rm det}({\bf M})=1$, so that the linear map  $q^{\mu}=\mu q \mu^{-1}$ represents a rotation in $\mathbb{R}^3$,  which implies
\begin{equation}\label{pr:prodijkmurl}
i^{\mu}i^{\mu}=j^{\mu}j^{\mu}=k^{\mu}k^{\mu}=i^{\mu}j^{\mu}k^{\mu}=-1
\end{equation}
Thus, any quaternion $q$ can be alternatively represented in the $\mu$\textit{-basis} as
\begin{equation}
q=q_a+q_bi+q_cj+q_dk=q_a+q_b'i^{\mu}+q_c'j^{\mu}+q_d'k^{\mu}
\end{equation}
where $(q_b',q_c',q_d')=(q_b,q_c,q_d){\bf M}^T$.

\subsection{The Equivalence Relations and Involutions}\label{sec:equivrelation}

Given a complex number $z=z_a+iz_b$, its real and imaginary part
can be extracted as $z_a=\frac{1}{2}(z+z^*)$ and $z_b=\frac{1}{2i}(z-z^*)$ \cite{Moreno08}. Such convenient manipulation offers a number of advantages, but is not possible to achieve in the quaternion domain.
To deal with this issue, we employ the quaternion involutions (self-inverse mappings), given by \cite{Ell}
\begin{equation}\label{eq:qijkrot}
\begin{split}
&q^i=-iqi=q_a+iq_b-jq_c-kq_d,\quad q=q_a+iq_b+jq_c+kq_d\\
&q^j=-jqj=q_a-iq_b+jq_c-kq_d,\quad q^k=-kqk=q_a-iq_b-jq_c+kq_d
\end{split}
\end{equation}
and their conjugate involutions given by
\begin{align}\label{eq:qijkconjrot}
\begin{split}
&q^{i*}=q_a-iq_b+jq_c+kq_d,\quad q^*=q_a-iq_b-jq_c-kq_d\\
&q^{j*}=q_a+iq_b-jq_c+kq_d,\quad q^{k*}=q_a+iq_b+jq_c-kq_d
\end{split}
\end{align}
In this way, the four real components of the quaternion $q$ can
now be computed based on \eqref{eq:qijkrot} or \eqref{eq:qijkconjrot} as \cite{Sudbery,Took11,Mandic11,Gentili}
\begin{equation}\label{eq:z1qlink}
\begin{split}
&q_a=\frac{1}{4}(q+q^i+q^j+q^k),\quad  q_b=-\frac{i}{4}(q+q^i-q^j-q^k)\\
&q_c=-\frac{j}{4}(q-q^i+q^j-q^k),\quad  q_d=-\frac{k}{4}(q-q^i-q^j+q^k)
\end{split}
\end{equation}
\begin{equation}\label{eq:z1conjqlink}
\begin{split}
&q_a=\frac{1}{4}(q^*+q^{i*}+q^{j*}+q^{k*}),\quad  q_b=\frac{i}{4}(q^*+q^{i*}-q^{j*}-q^{k*})\\
&q_c=\frac{j}{4}(q^*-q^{i*}+q^{j*}-q^{k*}),\quad  q_d=\frac{k}{4}(q^*-q^{i*}-q^{j*}+q^{k*})
\end{split}
\end{equation}
This allows for any quaternion function of the four real variables $q_a,q_b,q_c,q_d$ to be
expressed as a function of the quaternion variables $\{q,q^{i},q^{j},q^{k}\}$
or $\{q^{*},q^{i*},q^{j*},q^{k*}\}$, whereby, the relationship between the involutions in \eqref{eq:qijkrot} and conjugate involutions in \eqref{eq:qijkconjrot} is given by
\begin{equation}\label{eq:linkqconjq}
\begin{split}
&q^*=\frac{1}{2}\left(-q+q^i+q^j+q^k\right),\quad q^{i*}=\frac{1}{2}\left(q-q^i+q^j+q^k\right)\\
&q^{j*}=\frac{1}{2}\left(q+q^i-q^j+q^k\right),\quad q^{k*}=\frac{1}{2}\left(q+q^i+q^j-q^k\right)
\end{split}
\end{equation}
\begin{equation}\label{eq:linkq2qconj}
\begin{split}
&q=\frac{1}{2}\left(-q^*+q^{i*}+q^{j*}+q^{k*}\right),\quad q^{i}=\frac{1}{2}\left(q^*-q^{i*}+q^{j*}+q^{k*}\right)\\
&q^{j}=\frac{1}{2}\left(q^*+q^{i*}-q^{j*}+q^{k*}\right),\quad q^{k}=\frac{1}{2}\left(q^*+q^{i*}+q^{j*}-q^{k*}\right)
\end{split}
\end{equation}

\begin{remark}
Observe that $q$ and $q^*$ are not independent and thus $\frac{\partial q^*}{\partial q}\neq 0$ as shown in \eqref{pr:lefthrpr1}.
This is the main difference from the CR calculus, where the derivative $\frac{\partial z^*}{\partial z} = 0$.
\end{remark}

\subsection{Results Used to Introduce GHR Derivatives}
The quaternion components, that is, the real variables $q_a,q_b,q_c$ and $q_d$ are mutually independent and hence so
are their differentials. Although the quaternion variables $q,q^{i},q^{j}$ and $q^{k}$ are related, it is important to notice that their differentials are linearly independent, similar to the CR calculus \cite{Hjorungnes11}. This condition is very important for distinguishing the GHR derivatives
from the quaternion differential of the function under consideration.
\begin{lemma}\label{lm:dpindp}
Let $f_n:\mathbb{H}\rightarrow\mathbb{H}$, $(n=1,2,3,4)$ be any arbitrary quaternion-valued functions. If the left case
\begin{equation}\label{eq:leftcsindp}
f_1dq^{\mu}+f_2dq^{\mu i}+f_3dq^{\mu j}+f_4dq^{\mu k}=0
\end{equation}
or the right case
\begin{equation}
dq^{\mu}f_1+dq^{\mu i}f_2+dq^{\mu j}f_3+dq^{\mu k}f_4=0
\end{equation}
for $\forall \mu \in \mathbb{H},\mu\neq 0$, then $f_n=0$ for $n\in\{1,2,3,4\}$.
\end{lemma}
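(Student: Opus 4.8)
\emph{Proof proposal.} The plan is to reduce the general $\mu$-rotated statement to the elementary case $\mu=1$, and then to extract four independent real one-forms whose coefficients must separately vanish. First I would use the fact that $\mu$ is a fixed nonzero quaternion together with Definition \ref{def:qrot} and property \eqref{pr:def1qmunu} to write $q^{\mu\nu}=(q^\nu)^\mu=\mu q^\nu\mu^{-1}$ for $\nu\in\{1,i,j,k\}$; since $\mu$ is constant this gives $dq^{\mu}=\mu\,dq\,\mu^{-1}$, $dq^{\mu i}=\mu\,dq^i\,\mu^{-1}$, and likewise for $j,k$. Substituting these into the left identity \eqref{eq:leftcsindp} and multiplying on the right by the invertible factor $\mu$ turns the hypothesis into
\begin{equation*}
g_1\,dq+g_2\,dq^i+g_3\,dq^j+g_4\,dq^k=0,\qquad g_n:=f_n\mu .
\end{equation*}
Because $\mu\neq 0$ is invertible, it suffices to prove $g_n=0$ for all $n$.

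Next I would expand each involution differential in the real basis. From \eqref{eq:qijkrot} one has $dq=dq_a+i\,dq_b+j\,dq_c+k\,dq_d$, $dq^i=dq_a+i\,dq_b-j\,dq_c-k\,dq_d$, and the analogous expressions for $dq^j,dq^k$. Since the real differentials $dq_a,dq_b,dq_c,dq_d$ are mutually independent, I would collect the coefficient of each one. Using that the real scalars $dq_a,\dots,dq_d$ commute with the $g_n$ while the imaginary units sit to their right, the vanishing of the four coefficients (after cancelling the common right-hand units $i,j,k$ from the last three) reads
\begin{equation*}
\begin{pmatrix} 1 & 1 & 1 & 1 \\ 1 & 1 & -1 & -1 \\ 1 & -1 & 1 & -1 \\ 1 & -1 & -1 & 1 \end{pmatrix}
\begin{pmatrix} g_1 \\ g_2 \\ g_3 \\ g_4 \end{pmatrix}=0 .
\end{equation*}

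The coefficient matrix $H$ is a Hadamard matrix with $HH^{T}=4\,{\bf I}_4$, hence invertible with a \emph{real} inverse; since the real entries of $H^{-1}$ commute with the quaternion unknowns, $\mathbf{g}=H^{-1}\mathbf{0}=\mathbf{0}$, so $g_1=g_2=g_3=g_4=0$ and multiplying by $\mu^{-1}$ gives $f_n=0$ for all $n$. The right case is handled identically: I would instead factor $\mu^{-1}$ out on the left, set $h_n:=\mu^{-1}f_n$, and reach $dq\,h_1+dq^i\,h_2+dq^j\,h_3+dq^k\,h_4=0$, which yields the same Hadamard system with the units now on the left.

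The one point requiring care — the main obstacle given the non-commutativity — is bookkeeping the side on which each quaternion multiplies: the reduction works precisely because the rotation factor $\mu$ and the scalar real differentials are central and can be moved past everything, even though $\mu$ cannot be moved past the imaginary units. Keeping the $f_n$ (equivalently $g_n$, $h_n$) consistently on one side throughout is what makes the resulting system have quaternion unknowns but a purely real, invertible coefficient matrix.
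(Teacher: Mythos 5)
Your proof is correct and takes essentially the same route as the paper's: both expand the quaternion differentials over the independent real differentials $dq_a,dq_b,dq_c,dq_d$ and then solve the resulting invertible $4\times 4$ sign (Hadamard) system, cancelling invertible unit factors along the way. The only difference is cosmetic — you conjugate $\mu$ out first and work in the standard basis $\{1,i,j,k\}$, whereas the paper keeps $\mu$ and expands directly in the rotated basis $\{1,i^{\mu},j^{\mu},k^{\mu}\}$; the two computations agree term by term under $i^{\mu}=\mu i \mu^{-1}$.
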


\begin{proof}
The Left Case: By applying the rotation transformation on  both sides of \eqref{eq:qqabcd} and \eqref{eq:qijkrot}, it follows that
\begin{align}\label{eq:qijkmurot}
\begin{split}
&q^{\mu}=q_a+i^{\mu}q_b+j^{\mu}q_c+k^{\mu}q_d,\quad q^{\mu i}=q_a+i^{\mu}q_b-j^{\mu}q_c-k^{\mu}q_d\\
&q^{\mu j}=q_a-i^{\mu}q_b+j^{\mu}q_c-k^{\mu}q_d,\quad q^{\mu k}=q_a-i^{\mu}q_b-j^{\mu}q_c+k^{\mu}q_d
\end{split}
\end{align}
By applying the differential operator to the above expressions, and substituting $dq^{\mu},dq^{\mu i},dq^{\mu j}$ and $dq^{\mu k}$  into \eqref{eq:leftcsindp}, we have
\begin{equation}
\begin{split}
&f_1(dq_a+i^{\mu}dq_b+j^{\mu}dq_c+k^{\mu}dq_d)+f_2\left(dq_a+i^{\mu}dq_b-j^{\mu}dq_c-k^{\mu}dq_d\right)\\
&+f_3\left(dq_a-i^{\mu}dq_b+j^{\mu}dq_c-k^{\mu}dq_d\right)+f_4\left(dq_a-i^{\mu}dq_b-j^{\mu}dq_c+k^{\mu}dq_d\right)=0
\end{split}
\end{equation}
This is equivalent to
\begin{equation}
\begin{split}
&(f_1+f_2+f_3+f_4)dq_a+\left(f_1+f_2-f_3-f_4\right)i^{\mu}dq_b\\
&+\left(f_1-f_2+f_3-f_4\right)j^{\mu}dq_c+\left(f_1-f_2-f_3+f_4\right)k^{\mu}dq_d=0
\end{split}
\end{equation}
Since the differentials $dq_a,dq_b,dq_c$ and $dq_d$ are independent, we have
\begin{equation}
\begin{split}
f_1+f_2+f_3+f_4=0,\quad f_1+f_2-f_3-f_4=0\\
f_1-f_2+f_3-f_4=0,\quad f_1-f_2-f_3+f_4=0
\end{split}
\end{equation}
Hence, it follows that $f_1=f_2=f_3=f_4=0$. The right case can be proved in a similar way.
\end{proof}

The next lemma enables us to identify the conjugate GHR derivatives, and its proof is essentially the same as that of Lemma \ref{lm:dpindp}, so it is omitted.
\begin{lemma}\label{lm:dpconjindp}
Let $f_n:\mathbb{H}\rightarrow\mathbb{H}$, $(n=1,2,3,4)$ be any arbitrary quaternion-valued function. If the left case
\begin{equation*}
f_1dq^{\mu*}+f_2dq^{\mu i*}+f_3dq^{\mu j*}+f_4dq^{\mu k*}=0
\end{equation*}
or the right case
\begin{equation*}
dq^{\mu*}f_1+dq^{\mu i*} f_2+dq^{\mu j*}f_3+dq^{\mu k*}f_4=0
\end{equation*}
for $\forall \mu \in \mathbb{H},\mu\neq 0$, then $f_n=0$ for $n\in\{1,2,3,4\}$.
\end{lemma}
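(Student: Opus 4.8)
The plan is to mirror the proof of Lemma \ref{lm:dpindp} verbatim, substituting the conjugate involutions \eqref{eq:qijkconjrot} for the involutions \eqref{eq:qijkrot}. First I would rotate the four conjugate involutions by $\mu$. Since conjugation by $\mu$ commutes with quaternion conjugation and sends $i,j,k$ to $i^{\mu},j^{\mu},k^{\mu}$, applying the property $q^{\mu*}\equiv(q^*)^{\mu}$ (and likewise $q^{\mu i*}=(q^{i*})^{\mu}$, etc.) to \eqref{eq:qijkconjrot} gives the $\mu$-basis expressions
\begin{align*}
&q^{\mu*}=q_a-i^{\mu}q_b-j^{\mu}q_c-k^{\mu}q_d,\quad q^{\mu i*}=q_a-i^{\mu}q_b+j^{\mu}q_c+k^{\mu}q_d\\
&q^{\mu j*}=q_a+i^{\mu}q_b-j^{\mu}q_c+k^{\mu}q_d,\quad q^{\mu k*}=q_a+i^{\mu}q_b+j^{\mu}q_c-k^{\mu}q_d
\end{align*}
which are exactly the analogues of \eqref{eq:qijkmurot}.

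I would then apply the differential operator, using that $q_a,q_b,q_c,q_d$ are real while $i^{\mu},j^{\mu},k^{\mu}$ are constant pure unit quaternions, and substitute $dq^{\mu*},dq^{\mu i*},dq^{\mu j*},dq^{\mu k*}$ into the left-case hypothesis. Collecting terms by the real differentials $dq_a,dq_b,dq_c,dq_d$ (factoring the units out on the right is legitimate because all four products carry the same unit on the same side) reduces the hypothesis to
\begin{equation*}
\begin{split}
&(f_1+f_2+f_3+f_4)dq_a+(-f_1-f_2+f_3+f_4)i^{\mu}dq_b\\
&+(-f_1+f_2-f_3+f_4)j^{\mu}dq_c+(-f_1+f_2+f_3-f_4)k^{\mu}dq_d=0.
\end{split}
\end{equation*}

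Because $dq_a,dq_b,dq_c,dq_d$ are mutually independent and $i^{\mu},j^{\mu},k^{\mu}$ are nonzero (hence invertible), each coefficient must vanish, yielding a four-by-four linear system in $f_1,f_2,f_3,f_4$ whose coefficient matrix has pairwise orthogonal rows and thus nonvanishing determinant; its only solution is $f_1=f_2=f_3=f_4=0$. The right case is handled identically, with the $f_n$ placed on the right and the units factored out on the left. I do not anticipate a genuine obstacle here: the only step needing care is getting the sign pattern of the rotated conjugate involutions right, since a sign slip would merely produce a different---but still invertible---system and could quietly mask an error.
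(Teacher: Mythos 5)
Your proposal is correct and follows exactly the route the paper intends: the paper omits this proof precisely because it is "essentially the same as that of Lemma \ref{lm:dpindp}," and your argument is that proof mirrored with the conjugate involutions, with the rotated expressions for $q^{\mu*},q^{\mu i*},q^{\mu j*},q^{\mu k*}$ and the resulting sign pattern both worked out correctly. The concluding linear system has pairwise orthogonal rows (hence an invertible real coefficient matrix), so $f_1=f_2=f_3=f_4=0$, as required.
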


\section{The HR calculus}\label{sec:hr}

The optimization problems in quaternion variables frequently arise in engineering applications such as control theory, signal processing, and
electrical engineering. Solutions often requires a first- or second-order approximation
of the objective function to generate a new search direction. However, real functions of quaternion variables are essentially non-analytic.
The recently proposed HR calculus solves these issues by using the quaternion involution, and we now introduce two kinds of HR derivatives (the derivation of HR calculus is given in Appendix A).
\begin{definition}[(The Left HR Derivatives)]\label{def:lefthr}
Let $q=q_a+iq_b+jq_c+kq_d$, where $q_a,q_b,q_c,q_d\in\mathbb{R}$, then the formal left HR derivatives, with respect to $\{q,q^i,q^j,q^k\}$ and $\{q^*,q^{i*},q^{j*},q^{k*}\}$ of the function $f$, are defined as
\begin{align*}
{\large
\left(\begin{array}{cc}
\frac{\partial f}{\partial q},&\frac{\partial f}{\partial q^*}\\
\frac{\partial f}{\partial q^i},&\frac{\partial f}{\partial q^{i*}}\\
\frac{\partial f}{\partial q^j},&\frac{\partial f}{\partial q^{j*}}\\
\frac{\partial f}{\partial q^k},&\frac{\partial f}{\partial q^{k*}}
\end{array}\right)^T}=\frac{1}{4}
{\large
\left(\begin{array}{cc}
\frac{\partial f}{\partial q_a},&\frac{\partial f}{\partial q_a}\\
\frac{\partial f}{\partial q_b},&-\frac{\partial f}{\partial q_b}\\
\frac{\partial f}{\partial q_c},&-\frac{\partial f}{\partial q_c}\\
\frac{\partial f}{\partial q_d},&-\frac{\partial f}{\partial q_d}
\end{array}\right)^T}
\left(\begin{array}{cccc}
1 & 1  & 1 & 1 \\
-i & -i & i  & i\\
-j & j  & -j & j\\
-k & k  & k  & -k\\
\end{array}\right)
\end{align*}
where $\frac{\partial f}{\partial q_a}$, $\frac{\partial f}{\partial q_b}$, $\frac{\partial f}{\partial q_c}$ and $\frac{\partial f}{\partial q_d}$ are the partial derivatives of $f$ with respect to $q_a$, $q_b$, $q_c$ and $q_d$, respectively.
\end{definition}
The properties of Definition \ref{def:lefthr} are:
\begin{align}\label{pr:lefthrpr1}
\begin{split}
\frac{\partial  q}{\partial q}=\frac{\partial  q^*}{\partial q^*}=1,\quad &\frac{\partial  q}{\partial q^{\eta}}=\frac{\partial  q^*}{\partial q^{\eta*}}=0\\
\frac{\partial  q}{\partial q*}=\frac{\partial  q^*}{\partial q}=-\frac{1}{2},\quad &\frac{\partial  q}{\partial q^{\eta*}}=\frac{\partial  q^*}{\partial q^{\eta}}=\frac{1}{2}
\end{split}\quad \forall\eta \in \{i,j,k\}
\end{align}
\begin{align}\label{pr:lefthrpr2}
\begin{split}
\left(\frac{\partial f}{\partial q}\right)^{\eta}=\frac{\partial f^{\eta}}{\partial q^{\eta}},
\quad &\left(\frac{\partial f}{\partial q^{\eta}}\right)^{\eta}=\frac{\partial f^{\eta}}{\partial q}\\
\frac{\partial (\eta f)}{\partial q}=\eta\frac{\partial f}{\partial q}=\frac{\partial f^{\eta}}{\partial q^{\eta}}\eta,
\quad &\frac{\partial (f\eta)}{\partial q}=\frac{\partial f}{\partial q^{\eta}}\eta=\eta\frac{\partial f^{\eta}}{\partial q}
\end{split}\quad \forall\eta \in \{1,i,j,k\}
\end{align}
It is important to note that if a function $f$ is premultiplied by a constant $\eta$ in the second line of \eqref{pr:lefthrpr2}, then the derivative of the product is equal to the derivative of $f$ premultiplied by the constant, but not for postmultiplication. In other words, the left constant $\eta$ can come out from the derivative of the product, which is the reason we call Definition \ref{def:lefthr} the left HR derivative.

\begin{definition}[(The Right HR Derivatives \cite{Mandic11})]\label{def:righthr}
 Let $q=q_a+iq_b+jq_c+kq_d$, where $q_a,q_b,q_c,q_d\in\mathbb{R}$, then the formal right HR derivatives, with respect to $\{q,q^i,q^j,q^k\}$ and $\{q^*,q^{i*},q^{j*},q^{k*}\}$ of the function $f$, are defined as
\begin{align*}
{\large
\left(\begin{array}{cc}
\frac{\partial_r f}{\partial q},&\frac{\partial_r f}{\partial q^*}\\
\frac{\partial_r f}{\partial q^i},&\frac{\partial_r f}{\partial q^{i*}}\\
\frac{\partial_r f}{\partial q^j},&\frac{\partial_r f}{\partial q^{j*}}\\
\frac{\partial_r f}{\partial q^k},&\frac{\partial_r f}{\partial q^{k*}}
\end{array}\right)}=\frac{1}{4}
\left(\begin{array}{cccc}
1 & -i  & -j & -k \\
1 & -i & j  & k\\
1 & i  & -j & k\\
1 & i  & j  & -k
\end{array}\right)
{\large
\left(\begin{array}{cc}
\frac{\partial f}{\partial q_a},&\frac{\partial f}{\partial q_a}\\
\frac{\partial f}{\partial q_b},&-\frac{\partial f}{\partial q_b}\\
\frac{\partial f}{\partial q_c},&-\frac{\partial f}{\partial q_c}\\
\frac{\partial f}{\partial q_d},&-\frac{\partial f}{\partial q_d}
\end{array}\right)}
\end{align*}
where $\frac{\partial f}{\partial q_a}$, $\frac{\partial f}{\partial q_b}$, $\frac{\partial f}{\partial q_c}$ and $\frac{\partial f}{\partial q_d}$ are the partial derivatives of $f$ with respect to $q_a$, $q_b$, $q_c$ and $q_d$, respectively.
\end{definition}
The properties of Definition \ref{def:righthr} are:
\begin{align}\label{pr:righthrpr1}
\begin{split}
\frac{\partial_r  q}{\partial q}=\frac{\partial_r  q^*}{\partial q^*}=1,\quad &\frac{\partial_r  q}{\partial q^{\eta}}=\frac{\partial_r  q^*}{\partial q^{\eta*}}=0\\
\frac{\partial_r  q}{\partial q*}=\frac{\partial_r  q^*}{\partial q}=-\frac{1}{2},\quad &\frac{\partial_r  q}{\partial q^{\eta*}}=\frac{\partial_r  q^*}{\partial q^{\eta}}=\frac{1}{2}
\end{split}\quad \forall\eta \in \{i,j,k\}
\end{align}
\begin{align}\label{pr:righthrpr2}
\begin{split}
\left(\frac{\partial_r f}{\partial q}\right)^{\eta}=\frac{\partial_r f^{\eta}}{\partial q^{\eta}},
\quad &\left(\frac{\partial_r f}{\partial q^{\eta}}\right)^{\eta}=\frac{\partial_r f^{\eta}}{\partial q}\\
\frac{\partial_r (f\eta)}{\partial q}=\frac{\partial_r f}{\partial q}\eta=\eta\frac{\partial_r f^{\eta}}{\partial q^{\eta}}
\quad &\frac{\partial_r (\eta f)}{\partial q}=\eta\frac{\partial_r f}{\partial q^{\eta}}=\frac{\partial_r f^{\eta}}{\partial q}\eta
\end{split}\quad \forall\eta \in \{1,i,j,k\}
\end{align}
where the second line of \eqref{pr:righthrpr2} is just a mirrors image of \eqref{pr:lefthrpr2}. Thus, we call Definition \ref{def:righthr} the right HR derivative, denoted by the $\partial_r$ to distinguish from the left HR derivatives.
\begin{remark}
The only difference between the left HR derivatives and the right HR derivative is the position of the partial derivative $\frac{\partial f}{\partial q_a}$, $\frac{\partial f}{\partial q_b}$, $\frac{\partial f}{\partial q_c}$ and $\frac{\partial f}{\partial q_d}$. In the left HR derivative, $\frac{\partial f}{\partial q_a},\frac{\partial f}{\partial q_b}, \frac{\partial f}{\partial q_c}$ and $\frac{\partial f}{\partial q_d}$ stand on the left side and imaginary units $i,j,k$ on the right side. It is exactly the opposite case for the right HR derivative. Note that the $\frac{\partial f}{\partial q_a},\frac{\partial f}{\partial q_b}, \frac{\partial f}{\partial q_c}$ and $\frac{\partial f}{\partial q_d}$ cannot swap position with the imaginary units $i,j,k$ because of the noncommutative nature of quaternion product.
\end{remark}

\subsection{Relation Between the HR Derivatives}
By applying the Hermitian operator to both sides of the expression in Definition \ref{def:lefthr} and using $(AB)^H=B^HA^H$, we obtain
\begin{align}\label{eq:lefthrconj}
\left(\begin{array}{cc}
\frac{\partial f}{\partial q},&\frac{\partial f}{\partial q^*}\\
\frac{\partial f}{\partial q^i},&\frac{\partial f}{\partial q^{i*}}\\
\frac{\partial f}{\partial q^j},&\frac{\partial f}{\partial q^{j*}}\\
\frac{\partial f}{\partial q^k},&\frac{\partial f}{\partial q^{k*}}
\end{array}\right)^*=\frac{1}{4}
\left(\begin{array}{cccc}
1 & i  & j & k \\
1 & i & -j  & -k\\
1 & -i  & j & -k\\
1 & -i  & -j  & k\
\end{array}\right)
\left(\begin{array}{cc}
\frac{\partial f}{\partial q_a},&\frac{\partial f}{\partial q_a}\\
\frac{\partial f}{\partial q_b},&-\frac{\partial f}{\partial q_b}\\
\frac{\partial f}{\partial q_c},&-\frac{\partial f}{\partial q_c}\\
\frac{\partial f}{\partial q_d},&-\frac{\partial f}{\partial q_d}
\end{array}\right)^*
\end{align}
by replacing $f$ with its conjugate $f^*$ in \eqref{eq:lefthrconj} and using $\left(\frac{\partial f^*}{\partial \xi}\right)^*=\frac{\partial f}{\partial \xi}$, $\xi\in\{q_a,q_b,q_c,q_d\}$. Then, the pair of relationships between the left HR derivatives and the right HR derivatives becomes
\begin{align}\label{eq:leftrightlink}
\left(\begin{array}{cc}
\frac{\partial_r f}{\partial q},&\frac{\partial_r f}{\partial q^*}\\
\frac{\partial_r f}{\partial q^i},&\frac{\partial_r f}{\partial q^{i*}}\\
\frac{\partial_r f}{\partial q^j},&\frac{\partial_r f}{\partial q^{j*}}\\
\frac{\partial_r f}{\partial q^k},&\frac{\partial_r f}{\partial q^{k*}}
\end{array}\right)=
\left(\begin{array}{cc}
\frac{\partial f^*}{\partial q^*},&\frac{\partial f^*}{\partial q}\\
\frac{\partial f^*}{\partial q^{i*}},&\frac{\partial f^*}{\partial q^i}\\
\frac{\partial f^*}{\partial q^{j*}},&\frac{\partial f^*}{\partial q^j}\\
\frac{\partial f^*}{\partial q^{k*}},&\frac{\partial f^*}{\partial q^k}
\end{array}\right)^*
,\quad
\left(\begin{array}{cc}
\frac{\partial f}{\partial q^*},&\frac{\partial f}{\partial q}\\
\frac{\partial f}{\partial q^{i*}},&\frac{\partial f}{\partial q^i}\\
\frac{\partial f}{\partial q^{j*}},&\frac{\partial f}{\partial q^j}\\
\frac{\partial f}{\partial q^{k*}},&\frac{\partial f}{\partial q^k}
\end{array}\right)=
\left(\begin{array}{cc}
\frac{\partial_r f^*}{\partial q},&\frac{\partial_r f^*}{\partial q^*}\\
\frac{\partial_r f^*}{\partial q^i},&\frac{\partial_r f^*}{\partial q^{i*}}\\
\frac{\partial_r f^*}{\partial q^j},&\frac{\partial_r f^*}{\partial q^{j*}}\\
\frac{\partial_r f^*}{\partial q^k},&\frac{\partial_r f^*}{\partial q^{k*}}
\end{array}\right)^*
\end{align}

\begin{remark}
From the identity \eqref{eq:leftrightlink}, we can see that the left HR derivative is equal to the right HR derivative if the function $f$ is real-valued. This result is instrumental for practical applications of the HR calculus, where the objective function (or cost function) is often real-valued, such as the mean square error. Using the HR calculus, it is not important to choose the kind of HR derivative,  because the final results are exactly the same. In the sequel, we therefore mainly focus on the left HR derivatives.
\end{remark}

\subsection{Higher Order HR Derivatives}\label{sec:hgderiv}
Since a formal derivative of a function $f:\mathbb{H}\rightarrow\mathbb{H}$ is (wherever it exists) again a function from $\mathbb{H}$ to $\mathbb{H}$, it makes sense to take the HR derivative of HR derivative, i.e., a higher order HR derivative. We shall
consider second order left derivatives of the form
\begin{equation}
\begin{split}
\frac{\partial^2 f}{\partial q^{\mu} \partial q^{\nu}}=\frac{\partial }{\partial q^{\mu}}\left(\frac{\partial f}{\partial q^{\nu}}\right), \quad
\frac{\partial^2 f}{\partial q^{\mu*} \partial q^{\nu*}}=\frac{\partial }{\partial q^{\mu*} }\left(\frac{\partial f}{\partial q^{\nu*} }\right)\\
\frac{\partial^2 f}{\partial q^{\mu} \partial q^{\nu*}}=\frac{\partial }{\partial q^{\mu}}\left(\frac{\partial f}{\partial q^{\nu*}}\right), \quad
\frac{\partial^2 f}{\partial q^{\mu*} \partial q^{\nu}}=\frac{\partial }{\partial q^{\mu*} }\left(\frac{\partial f}{\partial q^{\nu} }\right)
\end{split}
\end{equation}
where $\mu,\nu\in\{1,i,j,k\}$. From \eqref{pr:lefthrpr1} and \eqref{pr:lefthrpr2}, we obtain
\begin{equation}
\begin{split}
&\left(\frac{\partial^2 f}{\partial q^{i} \partial q^{i}}\right)^i=\frac{\partial }{\partial q^{ii}}\left(\frac{\partial f}{\partial q^{i}}\right)^i=\frac{\partial }{\partial q}\left(\frac{\partial f^i}{\partial q^{ii}}\right)=\frac{\partial^2 f^i}{\partial q\partial q}\\
&\left(\frac{\partial^2 f}{\partial q^{i} \partial q^{j}}\right)^k=\frac{\partial }{\partial q^{ki}}\left(\frac{\partial f}{\partial q^{j}}\right)^k=\frac{\partial }{\partial q^{j}}\left(\frac{\partial f^k}{\partial q^{kj}}\right)=\frac{\partial^2 f^k}{\partial q^{j} \partial q^{i}}\\
\end{split}
\end{equation}
If $f$ is a real-valued function, the second formula in the above expression can be simplified as
\begin{equation}
\left(\frac{\partial^2 f}{\partial q^{i} \partial q^{j}}\right)^k=\frac{\partial^2 f}{\partial q^{j} \partial q^{i}}
\end{equation}
This clearly shows that the mixed second order left HR derivatives are in general not equal, that is
\begin{equation}
\frac{\partial^2 f}{\partial q^{\mu} \partial q^{\nu}}\neq\frac{\partial^2 f}{\partial q^{\nu} \partial q^{\mu}}
\end{equation}
where $\mu,\nu\in\{1,i,j,k\}$. The second order left HR derivatives have a commutative property \cite{Sudbery}
\begin{equation}
16\frac{\partial^2 f}{\partial q^{\mu} \partial q^{\mu*}}=16\frac{\partial^2 f}{\partial q^{\mu*} \partial q^{\mu}}
=\Delta f\triangleq \frac{\partial^2 f}{\partial q^2_a}+\frac{\partial^2 f}{\partial q^2_b}
+\frac{\partial^2 f}{\partial q^2_c}+\frac{\partial^2 f}{\partial q^2_d}
\end{equation}
If $f$ is a real-valued function, we obtain
\begin{equation}
\begin{split}
\left(\frac{\partial^2 f}{\partial q^{\mu} \partial q^{\nu}}\right)^*=\frac{\partial^2 f}{\partial q^{\nu*} \partial q^{\mu*}},\quad
\left(\frac{\partial^2 f}{\partial q^{\mu*} \partial q^{\nu*}}\right)^*=\frac{\partial^2 f}{\partial q^{\nu} \partial q^{\mu}}\\
\left(\frac{\partial^2 f}{\partial q^{\mu} \partial q^{\nu*}}\right)^*=\frac{\partial^2 f}{\partial q^{\nu} \partial q^{\mu*}},\quad
\left(\frac{\partial^2 f}{\partial q^{\mu*} \partial q^{\nu}}\right)^*=\frac{\partial^2 f}{\partial q^{\nu*} \partial q^{\mu}}\\
\end{split}
\end{equation}
In a similar manner, the second order right HR derivatives can be defined as
\begin{equation}
\begin{split}
\frac{\partial_r^2 f}{\partial q^{\mu} \partial q^{\nu}}=\frac{\partial_r }{\partial q^{\mu}}\left(\frac{\partial_r f}{\partial q^{\nu}}\right), \quad
\frac{\partial_r^2 f}{\partial q^{\mu*} \partial q^{\nu*}}=\frac{\partial_r }{\partial q^{\mu*} }\left(\frac{\partial_r f}{\partial q^{\nu*} }\right)\\
\frac{\partial_r^2 f}{\partial q^{\mu} \partial q^{\nu*}}=\frac{\partial_r }{\partial q^{\mu}}\left(\frac{\partial_r f}{\partial q^{\nu*}}\right), \quad
\frac{\partial_r^2 f}{\partial q^{\mu*} \partial q^{\nu}}=\frac{\partial_r }{\partial q^{\mu*} }\left(\frac{\partial_r f}{\partial q^{\nu} }\right)
\end{split}
\end{equation}
where $\mu,\nu\in\{1,i,j,k\}$. An important commutative property between second order left and right derivatives of a real valued function $f$ is given by
\begin{equation}
\begin{split}
\frac{\partial_r^2 f}{\partial q^{\mu} \partial q^{\nu}}=\frac{\partial^2 f}{\partial q^{\nu} \partial q^{\mu}}, \quad
\frac{\partial_r^2 f}{\partial q^{\mu*} \partial q^{\nu*}}=\frac{\partial^2 f}{\partial q^{\nu*} \partial q^{\mu*}}\\
\frac{\partial_r^2 f}{\partial q^{\mu} \partial q^{\nu*}}=\frac{\partial^2 f}{\partial q^{\nu*} \partial q^{\mu}}, \quad
\frac{\partial_r^2 f}{\partial q^{\mu*} \partial q^{\nu}}=\frac{\partial^2 f}{\partial q^{\nu} \partial q^{\mu*}}
\end{split}
\end{equation}

\subsection{The Validity of the Traditional Product Rule.}\label{sec:limithr}  Definitions \ref{def:lefthr} and \ref{def:righthr} give a method to calculate the HR derivatives, but are complicated and inefficient. For example, the power function $f(q)=q^n$, it is too complicated and inconvenient to compute using Definition \ref{def:lefthr} or \ref{def:righthr}. The greatest difficulty with the HR calculus is that it does not satisfy the traditional product rule, that is, for any quaternion functions $f(q)$ and $g(q)$, in general we have
\begin{equation}\label{rl:oldprdrule}
\frac{\partial (fg)}{\partial q}\neq f\frac{\partial g}{\partial q}+\frac{\partial f}{\partial q}g
\end{equation}
We shall illustrate this technical obstacle by two examples.
\begin{example}\label{exp:qq}
Find the HR derivative of the function $f:\mathbb{H}\rightarrow \mathbb{H}$ given by
\begin{equation}
f(q)=q^2=q_a^2-(q_b^2+q_c^2+q_d^2)+2q_a(iq_b+jq_c+kq_d)
\end{equation}
where $q=q_a+iq_b+jq_c+kq_d$, $q_a,q_b,q_c,q_d\in\mathbb{R}$.
\end{example}
\textbf{Solution}: By Definition \ref{def:lefthr}, the left side of \eqref{rl:oldprdrule} becomes
\begin{align}
\begin{split}
&\frac{\partial (q^2)}{\partial q}=\frac{1}{4}\left(\frac{\partial q^2}{\partial q_a}-\frac{\partial q^2}{\partial q_b}i-\frac{\partial q^2}{\partial q_c}j-\frac{\partial q^2}{\partial q_d}k\right)\\
&=\frac{1}{4}\Big(2q_a+2(iq_b+jq_c+kq_d)-(-2q_b+2q_a i)i-(-2q_c+2q_a j)j-(-2q_d+2q_a k)k\Big)\\
&=\frac{1}{4}\Big(8q_a+4q_b i+4q_c j+4q_d k\Big)=q+\mathfrak{R}(q)
\end{split}
\end{align}
Alternatively, using the property \eqref{pr:lefthrpr1}, the right side of \eqref{rl:oldprdrule} can be calculated as
\begin{equation}
q\frac{\partial q}{\partial q}+\frac{\partial q}{\partial q}q=2q
\end{equation}
This clearly shows that the left side is not equal to the right side of \eqref{rl:oldprdrule}, and thus the product rule is not valid.

\begin{example}\label{exp:qqconj}
Find the HR derivative of the function $f:\mathbb{H}\rightarrow \mathbb{H}$ given by
\begin{equation}
f(q)=|q|^2=qq^*
\end{equation}
\end{example}
\textbf{Solution}: By Definition \ref{def:lefthr}, we will first calculate the left side of \eqref{rl:oldprdrule} as
\begin{align}
\begin{split}
\frac{\partial (|q|^2)}{\partial q}&=\frac{1}{4}\left(\frac{\partial |q|^2}{\partial q_a}-\frac{\partial |q|^2}{\partial q_b}i-\frac{\partial |q|^2}{\partial q_c}j-\frac{\partial |q|^2}{\partial q_d}k\right)\\
&=\frac{1}{4}\Big(2q_a-2q_bi-2q_cj-2q_dk\Big)=\frac{1}{2}q^*
\end{split}
\end{align}
while, using the property \eqref{pr:lefthrpr1}, the right side of \eqref{rl:oldprdrule} can be calculated as
\begin{equation}
q\frac{\partial q^*}{\partial q}+\frac{\partial q}{\partial q}q^*=-\frac{q}{2}+q^*
\end{equation}
This clearly shows that the left side is not equal to the right side of \eqref{rl:oldprdrule}.

\begin{remark}
Examples \ref{exp:qq} and \ref{exp:qqconj} show that the traditional product rule is not applicable for the left HR derivative in Definition \ref{def:lefthr}. In a similar manner, the traditional product rule is not applicable for the right HR derivative in Definition \ref{def:righthr}.
\end{remark}

\section{The Generalization of HR Calculus}
In this section, we propose the GHR derivatives to solve the obstacle of the product rule within the HR calculus. We achieve this by changing the basis $\{1,i,j,k\}$ in Definition \ref{def:lefthr} and \ref{def:righthr} to a general orthogonal basis $\{1,i^{\mu},j^{\mu},k^{\mu}\}$, as shown in \eqref{pr:prodijkmurl}.
This allows us to give a similar derivation of the GHR calculus as that of HR calculus in Appendix A, but this is omitted to save the space.
\begin{definition}[(The Left GHR Derivatives)]\label{def:leftghr}
Let $q=q_a+iq_b+jq_c+kq_d$, where $q_a,q_b,q_c,q_d\in\mathbb{R}$, then the left GHR derivatives, with respect to $q^{\mu}$ and $q^{\mu*}$ $(\mu\neq 0, \mu \in \mathbb{H})$, of the function $f$ are defined as
\begin{equation*}
\begin{split}
\frac{\partial f}{\partial q^{\mu}}=\frac{1}{4}\left(\frac{\partial f}{\partial q_a}-\frac{\partial f}{\partial q_b}i^{\mu}-\frac{\partial f}{\partial q_c}j^{\mu}-\frac{\partial f}{\partial q_d}k^{\mu}\right)\\
\frac{\partial f}{\partial q^{\mu*}}=\frac{1}{4}\left(\frac{\partial f}{\partial q_a}+\frac{\partial f}{\partial q_b}i^{\mu}+\frac{\partial f}{\partial q_c}j^{\mu}+\frac{\partial f}{\partial q_d}k^{\mu}\right)
\end{split}
\end{equation*}
where $\frac{\partial f}{\partial q_a}$, $\frac{\partial f}{\partial q_b}$, $\frac{\partial f}{\partial q_c}$ and $\frac{\partial f}{\partial q_d}$ are the partial derivatives of $f$ with respect to $q_a$, $q_b$, $q_c$ and $q_d$, respectively, and the set $\{1,i^{\mu},j^{\mu},k^{\mu}\}$ is a general orthogonal basis of $\mathbb{H}$.
\end{definition}

The properties of Definition \ref{def:leftghr} are:
\begin{align}\label{pr:leftghrpr2}
\frac{\partial f}{\partial q^{\mu}}=
\left\{\begin{array}{cc}
\frac{\partial f}{\partial q}, \quad if\;\mu=1\\
\frac{\partial f}{\partial q^i}, \quad if\;\mu=i\\
\frac{\partial f}{\partial q^j}, \quad if\;\mu=j\\
\frac{\partial f}{\partial q^k}, \quad if\;\mu=k
\end{array}\right.
,\quad
\frac{\partial f}{\partial q^{\mu*}}=
\left\{\begin{array}{cc}
\frac{\partial f}{\partial q^*}, \quad if\;\mu=1\\
\frac{\partial f}{\partial q^{i*}}, \quad if\;\mu=i\\
\frac{\partial f}{\partial q^{j*}}, \quad if\;\mu=j\\
\frac{\partial f}{\partial q^{k*}}, \quad if\;\mu=k
\end{array}\right.
\end{align}
\begin{equation}\label{pr:leftghrpr1}
\begin{split}
\left(\frac{\partial f}{\partial q^{\mu}}\right)^{\nu}=\frac{\partial f^{\nu}}{\partial q^{\nu\mu}},
\quad &\left(\frac{\partial f}{\partial q^{\nu^*\mu}}\right)^{\nu}=\frac{\partial f^{\nu}}{\partial q^{\mu}}\\
\frac{\partial (\nu f)}{\partial q^{\mu}}=\nu\frac{\partial f}{\partial q^{\mu}}=\frac{\partial f^{\nu}}{\partial q^{\nu\mu}}\nu,
\quad &\frac{\partial (f\nu)}{\partial q^{\mu}}=\frac{\partial f}{\partial q^{\nu\mu}}\nu=\nu\frac{\partial f^{(\nu^*)}}{\partial q^{\mu}}
\end{split}\quad \forall\nu \in \mathbb{H}
\end{equation}
where the properties \eqref{pr:pqmu} and \eqref{pr:def1qmunu} are used in the first line of \eqref{pr:leftghrpr1}, and $\mu q=q^{\mu}\mu$ and \eqref{pr:def1qmunu} are used in the second line of \eqref{pr:leftghrpr1}. The detail is omitted because the proof is similar to \eqref{pr:lefthrpr2}. If $f$ is a real-valued function, the conjugate rule of the left GHR derivatives is given by
\begin{equation}
\begin{split}
\left(\frac{\partial f}{\partial q^{\mu}}\right)^{*}=\frac{\partial f}{\partial q^{\mu*}},
\quad &\left(\frac{\partial f}{\partial q^{\mu*}}\right)^{*}=\frac{\partial f}{\partial q^{\mu}}
\end{split}
\end{equation}

\begin{definition}[(The Right GHR Derivatives)]\label{def:rightghr}
Let $q=q_a+iq_b+jq_c+kq_d$, where $q_a,q_b,q_c,q_d\in\mathbb{R}$, then the right GHR derivatives with respect to $q^{\mu}$ and $q^{\mu*}$ $(\mu\neq 0, \mu \in \mathbb{H})$ of the function $f$, are defined as
\begin{equation*}
\begin{split}
\frac{\partial_r f}{\partial q^{\mu}}=\frac{1}{4}\left(\frac{\partial f}{\partial q_a}-i^{\mu}\frac{\partial f}{\partial q_b}-j^{\mu}\frac{\partial f}{\partial q_c}-k^{\mu}\frac{\partial f}{\partial q_d}\right)\\
\frac{\partial_r f}{\partial q^{\mu*}}=\frac{1}{4}\left(\frac{\partial f}{\partial q_a}+i^{\mu}\frac{\partial f}{\partial q_b}+j^{\mu}\frac{\partial f}{\partial q_c}+k^{\mu}\frac{\partial f}{\partial q_d}\right)
\end{split}
\end{equation*}
where $\frac{\partial f}{\partial q_a}$, $\frac{\partial f}{\partial q_b}$, $\frac{\partial f}{\partial q_c}$ and $\frac{\partial f}{\partial q_d}$ are the partial derivatives of $f$ with respect to $q_a$, $q_b$, $q_c$ and $q_d$, respectively, and the set $\{1,i^{\mu},j^{\mu},k^{\mu}\}$ is a general orthogonal basis of $\mathbb{H}$.
\end{definition}

The properties of Definition \ref{def:rightghr} are:
\begin{align}\label{pr:grightghrpr2}
\frac{\partial_r f}{\partial q^{\mu}}=
\left\{\begin{array}{cc}
\LARGE{\frac{\partial_r f}{\partial q}}, \quad if\;\mu=1\\
\frac{\partial_r f}{\partial q^i}, \quad if\;\mu=i\\
\frac{\partial_r f}{\partial q^j}, \quad if\;\mu=j\\
\frac{\partial_r f}{\partial q^k}, \quad if\;\mu=k
\end{array}\right.
,\quad
\frac{\partial_r f}{\partial q^{\mu*}}=
\left\{\begin{array}{cc}
\LARGE{\frac{\partial_r f}{\partial q^*}}, \quad if\;\mu=1\\
\frac{\partial_r f}{\partial q^{i*}}, \quad if\;\mu=i\\
\frac{\partial_r f}{\partial q^{j*}}, \quad if\;\mu=j\\
\frac{\partial_r f}{\partial q^{k*}}, \quad if\;\mu=k
\end{array}\right.
\end{align}
\begin{equation}\label{pr:grightghrpr1}
\begin{split}
\left(\frac{\partial_r f}{\partial q^{\mu}}\right)^{\nu}=\frac{\partial_r f^{\nu}}{\partial q^{\nu\mu}},
\quad &\left(\frac{\partial_r f}{\partial q^{\nu^*\mu}}\right)^{\nu}=\frac{\partial_r f^{\nu}}{\partial q^{\mu}}\\
\frac{\partial_r (f\nu)}{\partial q^{\mu}}=\frac{\partial_r f}{\partial q^{\mu}}\nu=\nu\frac{\partial_r f^{(\nu^*)}}{\partial q^{\nu^*\mu}},
\quad &\frac{\partial_r (\nu f)}{\partial q^{\mu}}=\nu\frac{\partial_r f}{\partial q^{\nu^*\mu}}=\frac{\partial_r f^{\nu}}{\partial q^{\mu}}\nu
\end{split}\quad \forall\nu \in \mathbb{H}
\end{equation}
Similar to the relation in \eqref{eq:leftrightlink}, the relation between the two kinds of the GHR derivatives can be found as
\begin{equation}\label{eq:gleftrightlink}
\begin{split}
\frac{\partial_r f}{\partial q^{\mu}}=\left(\frac{\partial f^*}{\partial q^{\mu*}}\right)^*,\quad
\frac{\partial_r f}{\partial q^{\mu*}}=\left(\frac{\partial f^*}{\partial q^{\mu}}\right)^*\\
\frac{\partial f}{\partial q^{\mu}}=\left(\frac{\partial_r f^*}{\partial q^{\mu*}}\right)^* ,\quad
\frac{\partial f}{\partial q^{\mu*}}=\left(\frac{\partial_r f^*}{\partial q^{\mu}}\right)^*
\end{split}\quad \forall\mu \in \mathbb{H}
\end{equation}

\begin{remark}
By comparing Definition \ref{def:lefthr} and Definition \ref{def:leftghr},
it is seen that the GHR derivative is more concise and easier to understand than the HR derivative,
and that the HR derivative is a special case of the GHR derivative. More importantly, as shown below, the GHR derivatives incorporate a novel product rule, which is very convenient for calculating the HR and GHR derivatives. In addition, the GHR derivative can be extended to other orthogonal systems, such as $\{1,\eta,\eta',\eta''\}$ in \cite{Jvia10,Jvia11}.
\end{remark}

\subsection{The Novel Product Rule.}
In Section \ref{sec:limithr}, we have explained that the traditional product rule is not feasible for the HR calculus. Now, we propose a novel product rule to solve this technical obstacle, and show that the traditional product rule is a special case of the novel product rule in Corollary \ref{cor:leftprodrl}.
\begin{theorem}[(Product Rule of Left GHR)]\label{thm:nwpleftpdrl}
If the functions $f, g:\mathbb{H}\rightarrow \mathbb{H}$ have the left GHR derivatives, then so too has their product $fg$, and
\begin{equation*}
\frac{\partial (fg)}{\partial q^{\mu}}=f\frac{\partial g}{\partial q^{\mu}}+\frac{\partial f}{\partial q^{g\mu}}g,\quad \frac{\partial (fg)}{\partial q^{\mu*}}=f\frac{\partial g}{\partial q^{\mu*}}+\frac{\partial f}{\partial q^{g\mu*}}g
\end{equation*}
where $\frac{\partial f}{\partial q^{g\mu}}$ and $\frac{\partial f}{\partial q^{g\mu*}}$ can be obtained by replacing $\mu$ with $g\mu$ in Definition \ref{def:leftghr}.
\end{theorem}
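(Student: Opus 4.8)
The plan is to reduce everything to the ordinary product rule for partial derivatives with respect to the \emph{real} variables $q_a,q_b,q_c,q_d$, and then to repair the noncommutativity with a single rotation identity. First I would observe that, since each $q_s$ $(s\in\{a,b,c,d\})$ is real and the quaternion product is $\mathbb{R}$-bilinear, the classical Leibniz rule survives, $\frac{\partial(fg)}{\partial q_s}=\frac{\partial f}{\partial q_s}\,g+f\,\frac{\partial g}{\partial q_s}$, the real increment in the difference quotient dividing out on either side. Substituting this into Definition \ref{def:leftghr} gives
\begin{equation*}
\frac{\partial(fg)}{\partial q^{\mu}}=\frac{1}{4}\left[\Big(\tfrac{\partial f}{\partial q_a}g+f\tfrac{\partial g}{\partial q_a}\Big)-\Big(\tfrac{\partial f}{\partial q_b}g+f\tfrac{\partial g}{\partial q_b}\Big)i^{\mu}-\Big(\tfrac{\partial f}{\partial q_c}g+f\tfrac{\partial g}{\partial q_c}\Big)j^{\mu}-\Big(\tfrac{\partial f}{\partial q_d}g+f\tfrac{\partial g}{\partial q_d}\Big)k^{\mu}\right],
\end{equation*}
and I would split the right-hand side into the terms carrying $f(\partial g)$ and the terms carrying $(\partial f)g$.

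The $f(\partial g)$ terms assemble at once, because $f$ is a common \emph{left} factor and slides out past the basis elements $1,i^{\mu},j^{\mu},k^{\mu}$, producing exactly $f\,\frac{\partial g}{\partial q^{\mu}}$. The remaining block is
\begin{equation*}
\frac{1}{4}\left(\tfrac{\partial f}{\partial q_a}g-\tfrac{\partial f}{\partial q_b}g\,i^{\mu}-\tfrac{\partial f}{\partial q_c}g\,j^{\mu}-\tfrac{\partial f}{\partial q_d}g\,k^{\mu}\right),
\end{equation*}
and here the obstruction is visible: the factor $g$ sits \emph{between} the real partial derivatives and the imaginary units, so it cannot be pulled to the right naively. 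This is precisely the failure exhibited in Examples \ref{exp:qq} and \ref{exp:qqconj}, and it is the crux of the argument.

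The key step, which I expect to be the only genuine obstacle, is the rotation identity $g\,\eta^{\mu}=\eta^{g\mu}\,g$ for $\eta\in\{i,j,k\}$. It follows directly from Definition \ref{def:qrot} together with the inverse-of-product rule and cancellation, $\eta^{g\mu}g=(g\mu)\eta(g\mu)^{-1}g=g\mu\eta\mu^{-1}g^{-1}g=g\,\eta^{\mu}$ (equivalently, it is the composition property \eqref{pr:def1qmunu} applied to $\eta^{g\mu}=(\eta^{\mu})^{g}$). Applying it to each term converts $g\,i^{\mu}\mapsto i^{g\mu}g$, and likewise for $j,k$, so the block becomes
\begin{equation*}
\frac{1}{4}\left(\tfrac{\partial f}{\partial q_a}-\tfrac{\partial f}{\partial q_b}i^{g\mu}-\tfrac{\partial f}{\partial q_c}j^{g\mu}-\tfrac{\partial f}{\partial q_d}k^{g\mu}\right)g,
\end{equation*}
which is exactly $\frac{\partial f}{\partial q^{g\mu}}\,g$, i.e. Definition \ref{def:leftghr} with $\mu$ replaced by $g\mu$. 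Combining the two blocks yields the first identity. The conjugate case $\frac{\partial(fg)}{\partial q^{\mu*}}$ is handled verbatim, the only change being the $+$ signs in the conjugate GHR derivative, and the same rotation identity delivers $f\frac{\partial g}{\partial q^{\mu*}}+\frac{\partial f}{\partial q^{g\mu*}}g$. Once the rotation identity is in place, everything else is bookkeeping of the four real partial derivatives.
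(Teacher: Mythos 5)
Your proof is correct, and it takes a genuinely more direct route than the paper's. The paper argues in two stages: it first proves an auxiliary result (Lemma \ref{lem:qvfprodrl}) that the traditional product rule holds when the right factor $g$ is \emph{real}-valued, by decomposing $f$ into real components and invoking the left-constant rule in \eqref{pr:leftghrpr1}; it then expands a general $g$ in the rotated basis, $g=g_a+i^{\mu}g_b+j^{\mu}g_c+k^{\mu}g_d$, applies that lemma term by term, uses the right-constant rule $\frac{\partial (f\eta)}{\partial q^{\mu}}=\frac{\partial f}{\partial q^{\eta\mu}}\eta$, and regroups around the four real partials. You instead start from the order-preserving Leibniz rule $\frac{\partial(fg)}{\partial q_s}=\frac{\partial f}{\partial q_s}g+f\frac{\partial g}{\partial q_s}$, $s\in\{a,b,c,d\}$, which is indeed valid for quaternion-valued functions exactly as you argue (quaternion multiplication is $\mathbb{R}$-bilinear and the increment is real), substitute it into Definition \ref{def:leftghr}, and split the result into the $f(\partial g)$ block, where $f$ factors out on the left, and the $(\partial f)g$ block. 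Both arguments then converge on the identical key step: the rotation identity $g\,\eta^{\mu}=\eta^{g\mu}g$, equivalently $g\,i^{\mu}g^{-1}=i^{g\mu}$, which via \eqref{pr:def1qmunu} is precisely how the paper's Appendix B concludes. What your route buys is brevity and transparency — no auxiliary lemma, no constant rules, and it isolates exactly where noncommutativity enters (only in the $(\partial f)g$ block). What the paper's route buys is that its lemma has independent value (it is essentially Corollary \ref{cor:leftprodrl}, reused later), and the whole derivation stays inside the algebra of GHR rules already established. One shared caveat, not a gap of yours relative to the paper: both proofs implicitly assume $g(q)\neq 0$ at the point in question, so that $g\mu\neq 0$ and the rotations $i^{g\mu},j^{g\mu},k^{g\mu}$ — and hence $\frac{\partial f}{\partial q^{g\mu}}$ itself — are defined; the paper's manipulation $g\,i^{\mu}g^{-1}$ requires this just as your identity does.
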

\begin{proof}
The proof of Theorem \ref{thm:nwpleftpdrl} is given in Appendix B.
\end{proof}

\begin{corollary}[(Product Rule of Left HR)]\label{cor:nwpleftpdrl}
If the functions $f,g:\mathbb{H}\rightarrow \mathbb{H}$ have the left HR derivatives, then so too has their product $fg$, and
\begin{equation}
\begin{split}
&\frac{\partial (fg)}{\partial q^{}}=f\frac{\partial g}{\partial q^{}}+\frac{\partial f}{\partial q^{g}}g,
\quad \frac{\partial (fg)}{\partial q^{i}}=f\frac{\partial g}{\partial q^{i}}+\frac{\partial f}{\partial q^{gi}}g\\
&\frac{\partial (fg)}{\partial q^{j}}=f\frac{\partial g}{\partial q^{j}}+\frac{\partial f}{\partial q^{gj}}g,
\quad \frac{\partial (fg)}{\partial q^{k}}=f\frac{\partial g}{\partial q^{k}}+\frac{\partial f}{\partial q^{gk}}g
\end{split}
\end{equation}
\begin{equation}
\begin{split}
&\frac{\partial (fg)}{\partial q^{*}}=f\frac{\partial g}{\partial q^{*}}+\frac{\partial f}{\partial q^{g*}}g,\quad
\frac{\partial (fg)}{\partial q^{i*}}=f\frac{\partial g}{\partial q^{i*}}+\frac{\partial f}{\partial q^{gi*}}g\\
&\frac{\partial (fg)}{\partial q^{j*}}=f\frac{\partial g}{\partial q^{j*}}+\frac{\partial f}{\partial q^{gj*}}g
\quad \frac{\partial (fg)}{\partial q^{k*}}=f\frac{\partial g}{\partial q^{k*}}+\frac{\partial f}{\partial q^{gk*}}g
\end{split}
\end{equation}
where $\frac{\partial f}{\partial q},\frac{\partial f}{\partial q^i},\frac{\partial f}{\partial q^j},\frac{\partial f}{\partial q^k}$ and so on are the left HR derivatives in Definition \ref{def:lefthr}, and $\frac{\partial f}{\partial q^{gi}},\frac{\partial f}{\partial q^{gj}},\frac{\partial f}{\partial q^{gk}}$ and so on
can be obtained by replacing $\mu$ with $gi,gj,jk$ in Definition \ref{def:leftghr}.
\end{corollary}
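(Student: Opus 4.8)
The plan is to deduce this corollary directly from Theorem~\ref{thm:nwpleftpdrl} by specializing the free rotation parameter $\mu$ to each of the four values $1, i, j, k$ and invoking the identification of GHR derivatives with HR derivatives provided by property~\eqref{pr:leftghrpr2}. No independent computation from Definition~\ref{def:lefthr} will be needed; the corollary is essentially the restriction of the general GHR product rule to the standard orthogonal basis $\{1,i,j,k\}$.

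First I would recall that Theorem~\ref{thm:nwpleftpdrl} asserts, for every nonzero $\mu \in \mathbb{H}$,
\begin{equation*}
\frac{\partial (fg)}{\partial q^{\mu}} = f\frac{\partial g}{\partial q^{\mu}} + \frac{\partial f}{\partial q^{g\mu}}g,
\qquad
\frac{\partial (fg)}{\partial q^{\mu*}} = f\frac{\partial g}{\partial q^{\mu*}} + \frac{\partial f}{\partial q^{g\mu*}}g.
\end{equation*}
Setting $\mu = 1$ and then $\mu = i, j, k$ in turn produces eight identities. For these four special values, property~\eqref{pr:leftghrpr2} tells us that the left GHR derivatives $\frac{\partial (fg)}{\partial q^{\mu}}$ and $\frac{\partial g}{\partial q^{\mu}}$ (together with their conjugate counterparts) coincide with the left HR derivatives $\frac{\partial (fg)}{\partial q},\frac{\partial (fg)}{\partial q^{i}},\frac{\partial (fg)}{\partial q^{j}},\frac{\partial (fg)}{\partial q^{k}}$ of Definition~\ref{def:lefthr}. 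Substituting these identifications term by term, and likewise for the conjugate variables, yields precisely the eight equations in the statement. The existence of $\frac{\partial(fg)}{\partial q^{\mu}}$ is inherited from the existence of the left HR derivatives of $f$ and $g$ as already guaranteed by the theorem.

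The one point requiring care — rather than a genuine obstacle — is the treatment of the mixed term $\frac{\partial f}{\partial q^{g\mu}}$. Since $g$ is an arbitrary quaternion-valued function, the product $g\mu$ is generically \emph{not} one of $1, i, j, k$, so property~\eqref{pr:leftghrpr2} does not apply to it; this term must therefore be left as a bona fide GHR derivative and cannot be rewritten as an HR derivative. This is exactly why the statement specifies that $\frac{\partial f}{\partial q^{g}},\frac{\partial f}{\partial q^{gi}},\frac{\partial f}{\partial q^{gj}},\frac{\partial f}{\partial q^{gk}}$ are to be obtained by substituting $g\mu$ into Definition~\ref{def:leftghr}. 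With this understood, the specialization is immediate and the proof is complete.
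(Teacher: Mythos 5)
Your proposal is correct and follows exactly the paper's own route: the paper proves this corollary by setting $\mu=1,i,j,k$ in Theorem~\ref{thm:nwpleftpdrl}, which is precisely your specialization argument. Your additional remarks on the identification via~\eqref{pr:leftghrpr2} and on why the mixed term $\frac{\partial f}{\partial q^{g\mu}}$ must remain a GHR derivative are sound elaborations of the same one-line proof.
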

\begin{proof}
Set $\mu=1,i,j,k$ in Theorem \ref{thm:nwpleftpdrl}, then the corollary follows.
\end{proof}

Theorem \ref{thm:nwpleftpdrl} is also valid for the product of quaternion-valued function and real-valued function of quaternion variables, as stated below.
\begin{corollary}\label{cor:leftprodrl}
If the functions $f:\mathbb{H}\rightarrow \mathbb{H}$ and $g:\mathbb{H}\rightarrow \mathbb{R}$ have the left GHR derivatives, then
their product $fg$ satisfies the traditional product rule
\begin{equation*}
\frac{\partial (fg)}{\partial q^{\mu}}=f\frac{\partial g}{\partial q^{\mu}}+\frac{\partial f}{\partial q^{\mu}}g,\quad \frac{\partial (fg)}{\partial q^{\mu*}}=f\frac{\partial g}{\partial q^{\mu*}}+\frac{\partial f}{\partial q^{\mu*}}g
\end{equation*}
where $\frac{\partial f}{\partial q^{\mu}}$ and $\frac{\partial f}{\partial q^{\mu*}}$ are the left GHR derivatives in Definition \ref{def:leftghr}.
\end{corollary}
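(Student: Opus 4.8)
The plan is to derive Corollary~\ref{cor:leftprodrl} directly from the novel product rule of Theorem~\ref{thm:nwpleftpdrl}, which I may assume. Comparing the two statements, the only discrepancy is that Theorem~\ref{thm:nwpleftpdrl} carries the rotated variable $q^{g\mu}$ (resp.\ $q^{g\mu*}$) wherever the traditional rule has $q^{\mu}$ (resp.\ $q^{\mu*}$). Hence the entire corollary reduces to establishing the single identity $\frac{\partial f}{\partial q^{g\mu}}=\frac{\partial f}{\partial q^{\mu}}$, together with its conjugate analogue, under the hypothesis that $g$ is real-valued.

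The key fact I would exploit is that the quaternion rotation $(\cdot)^{\mu}$ of Definition~\ref{def:qrot} is invariant under scaling $\mu$ by a nonzero real number. Since $g:\mathbb{H}\rightarrow\mathbb{R}$ takes real values, at any point where $g\neq 0$ the scalar $g$ commutes with every quaternion and satisfies $(g\mu)^{-1}=g^{-1}\mu^{-1}$, so that
\[
q^{g\mu}=(g\mu)\,q\,(g\mu)^{-1}=g\,\mu\, q\,\mu^{-1}g^{-1}=\mu q\mu^{-1}=q^{\mu}.
\]
Equivalently, I can read this off the explicit component formulae for $i^{\mu},j^{\mu},k^{\mu}$: replacing $\mu$ by $g\mu$ multiplies every numerator entry and the factor $|\mu|^2$ by $g^2$, and these cancel, giving $i^{g\mu}=i^{\mu}$, $j^{g\mu}=j^{\mu}$ and $k^{g\mu}=k^{\mu}$. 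Thus the orthogonal basis $\{1,i^{g\mu},j^{g\mu},k^{g\mu}\}$ defining $\frac{\partial f}{\partial q^{g\mu}}$ coincides with $\{1,i^{\mu},j^{\mu},k^{\mu}\}$.

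Since the left GHR derivative in Definition~\ref{def:leftghr} depends on $\mu$ only through this orthogonal basis, it follows at once that $\frac{\partial f}{\partial q^{g\mu}}=\frac{\partial f}{\partial q^{\mu}}$ and $\frac{\partial f}{\partial q^{g\mu*}}=\frac{\partial f}{\partial q^{\mu*}}$. Substituting these two identities into the expressions of Theorem~\ref{thm:nwpleftpdrl} yields precisely the traditional product rule asserted in the corollary.

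The only delicate point I anticipate is the locus where $g$ vanishes, since there $g\mu=0$ and the rotation $q^{g\mu}$ is undefined. This turns out to be harmless: at such points the terms $\frac{\partial f}{\partial q^{g\mu}}g$ and $\frac{\partial f}{\partial q^{\mu}}g$ are both annihilated by the factor $g=0$, so the two product rules agree there as well; alternatively one establishes the identity of bases on the open set $\{q : g(q)\neq 0\}$ and passes to the closure by continuity. Beyond this minor subtlety, the argument is immediate from the scale-invariance of the quaternion rotation.
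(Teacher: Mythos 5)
Your proof is correct and takes essentially the same route as the paper: the paper's own proof of Corollary \ref{cor:leftprodrl} consists precisely of the observation that $q^{g\mu}=q^{\mu}$ and $q^{g\mu*}=q^{\mu*}$ for a real-valued $g$, substituted into Theorem \ref{thm:nwpleftpdrl}. Your extra care at the zero set of $g$ (where the rotation $q^{g\mu}$ is undefined) addresses a point the paper silently glosses over, and your resolution --- that the problematic terms on both sides carry the factor $g=0$ --- is sound.
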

\begin{proof}
From $q^{g\mu}=q^{\mu}$ and $q^{g\mu*}=q^{\mu*}$ for the real function $g$, then the corollary follows.
\end{proof}

\begin{theorem}[(Product Rule of Right GHR)]\label{thm:nwrtpdrl}
If the functions $f, g:\mathbb{H}\rightarrow \mathbb{H}$ have the right GHR derivatives, then so too has their product $fg$, and
\begin{equation*}
\frac{\partial_r (fg)}{\partial q^{\mu}}=\frac{\partial_r f}{\partial q^{\mu}}g+f\frac{\partial_r g}{\partial q^{f^*\mu}},
\quad \frac{\partial_r (fg)}{\partial q^{\mu*}}=\frac{\partial_r f}{\partial q^{\mu*}}g+f\frac{\partial_r g}{\partial q^{f^*\mu*}}
\end{equation*}
where $\frac{\partial_r g}{\partial q^{\mu f}}$ and $\frac{\partial_r g}{\partial q^{\mu f*}}$ are obtained by replacing $\mu$ with $\mu f$ in Definition \ref{def:rightghr}.
\end{theorem}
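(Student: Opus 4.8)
The plan is to avoid any direct computation from Definition \ref{def:rightghr} and instead reduce the statement to the left product rule already established in Theorem \ref{thm:nwpleftpdrl}. The bridge is the conjugate relation \eqref{eq:gleftrightlink} linking the left and right GHR derivatives, used in tandem with the order-reversing identity $(fg)^*=g^*f^*$. The whole argument is therefore purely formal: it transports the left rule across the conjugation and records how the rotation index behaves.

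Concretely, for the unstarred identity I would start from the first relation in \eqref{eq:gleftrightlink} and $(fg)^*=g^*f^*$ to write
\[
\frac{\partial_r (fg)}{\partial q^{\mu}}=\left(\frac{\partial (fg)^*}{\partial q^{\mu*}}\right)^*=\left(\frac{\partial (g^*f^*)}{\partial q^{\mu*}}\right)^*.
\]
Applying the (starred) left product rule of Theorem \ref{thm:nwpleftpdrl} to $g^*f^*$, with first factor $g^*$ and second factor $f^*$, gives
\[
\frac{\partial (g^*f^*)}{\partial q^{\mu*}}=g^*\frac{\partial f^*}{\partial q^{\mu*}}+\frac{\partial g^*}{\partial q^{f^*\mu*}}f^*.
\]
Conjugating and using $(AB)^*=B^*A^*$ together with $(A+B)^*=A^*+B^*$ yields
\[
\frac{\partial_r (fg)}{\partial q^{\mu}}=\left(\frac{\partial f^*}{\partial q^{\mu*}}\right)^*g+f\left(\frac{\partial g^*}{\partial q^{f^*\mu*}}\right)^*.
\]
Reading \eqref{eq:gleftrightlink} backwards — once at index $\mu$ and once at index $f^*\mu$ in place of $\mu$ — turns the two conjugated left derivatives into $\frac{\partial_r f}{\partial q^{\mu}}$ and $\frac{\partial_r g}{\partial q^{f^*\mu}}$, which is exactly the claimed formula. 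The starred identity follows in the same manner, this time starting from the second relation in \eqref{eq:gleftrightlink} and applying the unstarred case of Theorem \ref{thm:nwpleftpdrl}.

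The main obstacle, and essentially the only place where care is genuinely required, is the bookkeeping of the rotation index. In the left rule the index shift $\mu\mapsto g\mu$ is triggered by the \emph{second} factor of the product; after the reversal $fg\mapsto g^*f^*$ the second factor is $f^*$, so the shift becomes $\mu\mapsto f^*\mu$, which is precisely the $f^*\mu$ appearing in the statement. I would verify that the conjugation of a rotated derivative is compatible with \eqref{eq:gleftrightlink} at a shifted index, namely $\left(\partial g^*/\partial q^{f^*\mu*}\right)^*=\partial_r g/\partial q^{f^*\mu}$; this is just \eqref{eq:gleftrightlink} with $\mu$ replaced by $f^*\mu$, and since $\mu$ there ranges over all nonzero quaternions the substitution is legitimate. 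No property beyond \eqref{eq:gleftrightlink}, the order-reversal of conjugation, and this index substitution is needed.
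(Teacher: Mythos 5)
Your proof is correct, but it follows a genuinely different route from the one the paper intends. The paper omits the proof of Theorem \ref{thm:nwrtpdrl} with the remark that it is ``conformal with'' Theorem \ref{thm:nwpleftpdrl}; that is, the intended argument mirrors Appendix B with right derivatives throughout: expand the left factor in the orthogonal basis $\{1,i^{\mu},j^{\mu},k^{\mu}\}$, use a right-handed analogue of Lemma \ref{lem:qvfprodrl} together with the constant rules \eqref{pr:grightghrpr1}, and regroup the rotations to recognize $f\,\partial_r g/\partial q^{f^*\mu}$. You instead transport the already-proved left rule across the conjugation duality \eqref{eq:gleftrightlink} via $(fg)^*=g^*f^*$, and your index bookkeeping is exactly right: the shift $\mu\mapsto G\mu$ in the left rule is driven by the \emph{second} factor, which after order reversal is $f^*$, producing the $f^*\mu$ of the displayed formula (this also confirms that the trailing clause of the theorem, which speaks of ``$\mu f$'', is a typo for the displayed $f^*\mu$). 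Equally important is your observation that \eqref{eq:gleftrightlink} may be invoked at the function-valued index $f^*\mu$: since those identities are pointwise algebraic relations valid for every nonzero rotation index, the substitution is legitimate wherever $f\neq 0$ --- the same implicit caveat that already attaches to the symbol $\partial_r g/\partial q^{f^*\mu}$ in the statement itself and to $\partial f/\partial q^{g\mu}$ in the left rule, so it is not a defect of your argument. As to what each approach buys: yours is shorter, avoids duplicating the basis-expansion computation, and makes the left/right symmetry of the GHR calculus explicit, but it leans on \eqref{eq:gleftrightlink}, which the paper states without proof (it is a one-line check from Definitions \ref{def:leftghr} and \ref{def:rightghr}, conjugating and using that $i^{\mu},j^{\mu},k^{\mu}$ are pure unit quaternions --- worth writing out if your version is to stand alone); the paper's route is self-contained and uniform with the left case, at the cost of repeating the whole computation. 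One last point you use tacitly and should state: the hypothesis that $f,g$ have right GHR derivatives is equivalent to $f^*,g^*$ having left GHR derivatives, again by \eqref{eq:gleftrightlink} (or simply because both reduce to existence of the real partial derivatives $\partial/\partial q_a,\dots,\partial/\partial q_d$), so Theorem \ref{thm:nwpleftpdrl} is indeed applicable to the pair $(g^*,f^*)$.
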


\begin{corollary}[(Product Rule of Right HR)]\label{cor:nwprightpdrl}
If the functions $f, g:\mathbb{H}\rightarrow \mathbb{H}$ have the right HR derivatives, then so too has their product $fg$, and
\begin{equation}
\begin{split}
&\frac{\partial_r (fg)}{\partial q^{}}=\frac{\partial_r f}{\partial q^{}}g+f\frac{\partial_r g}{\partial q^{(f^*)}},
\quad \frac{\partial_r (fg)}{\partial q^{i}}=\frac{\partial_r f}{\partial q^{i}}g+f\frac{\partial_r g}{\partial q^{f^*i}}\\
&\frac{\partial_r (fg)}{\partial q^{j}}=\frac{\partial_r f}{\partial q^{j}}g+f\frac{\partial_r g}{\partial q^{f^*j}},
\quad \frac{\partial_r (fg)}{\partial q^{k}}=\frac{\partial_r f}{\partial q^{k}}g+f\frac{\partial_r g}{\partial q^{f^*k}}
\end{split}
\end{equation}
\begin{equation}
\begin{split}
&\frac{\partial_r (fg)}{\partial q^{*}}=\frac{\partial_r f}{\partial q^{*}}g+f\frac{\partial_r g}{\partial q^{(f^**)}},
\quad \frac{\partial_r (fg)}{\partial q^{i*}}=\frac{\partial_r f}{\partial q^{i*}}g+f\frac{\partial_r g}{\partial q^{f^*i*}}\\
&\frac{\partial_r (fg)}{\partial q^{j*}}=\frac{\partial_r f}{\partial q^{j*}}g+f\frac{\partial_r g}{\partial q^{f^*j*}}
\quad \frac{\partial_r (fg)}{\partial q^{k*}}=\frac{\partial_r f}{\partial q^{k*}}g+f\frac{\partial_r g}{\partial q^{f^*k*}}
\end{split}
\end{equation}
where $\frac{\partial_r f}{\partial q},\frac{\partial_r f}{\partial q^i},\frac{\partial_r f}{\partial q^j},\frac{\partial_r f}{\partial q^k}$ etc. are the right HR derivatives in Definition \ref{def:righthr}.
\end{corollary}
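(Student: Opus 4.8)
The plan is to derive Corollary \ref{cor:nwprightpdrl} as a direct specialization of the right GHR product rule in Theorem \ref{thm:nwrtpdrl}, exactly paralleling the way Corollary \ref{cor:nwpleftpdrl} was obtained from Theorem \ref{thm:nwpleftpdrl}. First I would note the existence claim: since $f$ and $g$ possess right HR derivatives, they possess right GHR derivatives (the HR derivative being the special case $\mu\in\{1,i,j,k\}$), so Theorem \ref{thm:nwrtpdrl} applies and guarantees that the product $fg$ has right GHR derivatives, hence in particular right HR derivatives. The mechanism that converts the general GHR formula into the stated HR formulas is the identification in \eqref{pr:grightghrpr2}: when the rotation parameter $\mu$ is one of the four basis elements $1,i,j,k$, the right GHR derivatives $\partial_r f/\partial q^{\mu}$ and $\partial_r f/\partial q^{\mu*}$ collapse to the corresponding right HR derivatives of Definition \ref{def:righthr}.

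Concretely, I would substitute $\mu=1,i,j,k$ in turn into the first identity of Theorem \ref{thm:nwrtpdrl},
\[
\frac{\partial_r (fg)}{\partial q^{\mu}}=\frac{\partial_r f}{\partial q^{\mu}}g+f\frac{\partial_r g}{\partial q^{f^*\mu}}.
\]
By \eqref{pr:grightghrpr2} the left-hand side and the leading factor $\partial_r f/\partial q^{\mu}$ on the right become the right HR derivatives with respect to $q,q^i,q^j,q^k$. The shifted term $\partial_r g/\partial q^{f^*\mu}$ specializes to $\partial_r g/\partial q^{(f^*)}$, $\partial_r g/\partial q^{f^*i}$, $\partial_r g/\partial q^{f^*j}$, $\partial_r g/\partial q^{f^*k}$ for the four choices of $\mu$, reproducing the first display of the corollary verbatim. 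Repeating the substitution in the conjugate identity of Theorem \ref{thm:nwrtpdrl}, together with the conjugate half of \eqref{pr:grightghrpr2}, yields the second display.

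The only point requiring attention — and the reason each formula is stated with a residual GHR derivative rather than a pure HR derivative — is that the rotation parameter $f^*\mu$ appearing in the shifted term is a general quaternion, not one of $\{1,i,j,k\}$. Consequently $\partial_r g/\partial q^{f^*\mu}$ cannot in general be rewritten via \eqref{pr:grightghrpr2} as a base right HR derivative; it must remain a right GHR derivative evaluated along the axis $f^*\mu$, obtained by substituting $f^*\mu$ for $\mu$ in Definition \ref{def:rightghr}. With this understood there is no genuine obstacle: the corollary is a term-by-term transcription of Theorem \ref{thm:nwrtpdrl} at the four basis values, mirroring the structure of the left-handed Corollary \ref{cor:nwpleftpdrl}, and the proof concludes.
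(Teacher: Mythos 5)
Your proposal is correct and follows exactly the paper's route: the paper omits this proof as being ``conformal with'' that of Corollary \ref{cor:nwpleftpdrl}, which is proved precisely by setting $\mu=1,i,j,k$ in the corresponding GHR product rule, and you do the same with Theorem \ref{thm:nwrtpdrl}, using \eqref{pr:grightghrpr2} to identify the basis-valued GHR derivatives with the right HR derivatives. Your added remark that the shifted term $\partial_r g/\partial q^{f^*\mu}$ must remain a GHR derivative (since $f^*\mu$ is a general quaternion) is a correct and worthwhile clarification, fully consistent with the paper's statement.
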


\begin{corollary}\label{cor:rightprodrl}
If the functions $f:\mathbb{H}\rightarrow \mathbb{R}$ and $g:\mathbb{H}\rightarrow \mathbb{H}$ have the right GHR derivatives, then
their product $fg$ satisfy the traditional product rule
\begin{equation*}
\frac{\partial_r (fg)}{\partial q^{\mu}}=\frac{\partial_r f}{\partial q^{\mu}}g+f\frac{\partial_r g}{\partial q^{\mu }},\quad \frac{\partial_r (fg)}{\partial q^{\mu*}}=\frac{\partial_r f}{\partial q^{\mu}}g+f\frac{\partial_r g}{\partial q^{\mu *}}
\end{equation*}
where $\frac{\partial_r f}{\partial q^{\mu}}$ and $\frac{\partial_r f}{\partial q^{\mu*}}$ are the right GHR derivatives in Definition \ref{def:rightghr}.
\end{corollary}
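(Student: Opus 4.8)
The plan is to deduce this directly from the Product Rule of Right GHR (Theorem \ref{thm:nwrtpdrl}), mirroring exactly how Corollary \ref{cor:leftprodrl} is obtained from Theorem \ref{thm:nwpleftpdrl}. Since $f$ is real-valued, at every point $q$ its conjugate $f^{*}=f$ is a real scalar, and the only place where $f$ enters the right GHR product rule through a rotation is in the replacement of $\mu$ by $f^{*}\mu$ inside $\frac{\partial_r g}{\partial q^{f^{*}\mu}}$ and $\frac{\partial_r g}{\partial q^{f^{*}\mu*}}$. I would therefore first show that scaling the rotation axis by a nonzero real factor leaves the rotation, and hence the induced orthogonal basis, unchanged.

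Concretely, for a nonzero real scalar $c$ one has $q^{c\mu}=(c\mu)q(c\mu)^{-1}=c\mu q\mu^{-1}c^{-1}=\mu q\mu^{-1}=q^{\mu}$, because $c$ commutes with every quaternion and $cc^{-1}=1$; applying this with $c=f^{*}$ gives $i^{f^{*}\mu}=i^{\mu}$, $j^{f^{*}\mu}=j^{\mu}$ and $k^{f^{*}\mu}=k^{\mu}$. Feeding these identities into Definition \ref{def:rightghr} shows that the orthogonal system defining the derivative is unaffected, so that $\frac{\partial_r g}{\partial q^{f^{*}\mu}}=\frac{\partial_r g}{\partial q^{\mu}}$ and $\frac{\partial_r g}{\partial q^{f^{*}\mu*}}=\frac{\partial_r g}{\partial q^{\mu*}}$. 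Substituting these back into the two identities of Theorem \ref{thm:nwrtpdrl} collapses the rotated argument $q^{f^{*}\mu}$ to $q^{\mu}$, and the stated traditional product rule follows.

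The only genuine subtlety — the step I would treat most carefully — is the pointwise nature of $f^{*}$: because $f$ is a function of $q$, the relation $q^{f^{*}\mu}=q^{\mu}$ is an identity of bases that holds at each $q$ with $f(q)\neq0$, while at the zeros of $f$ the rotation axis $f^{*}\mu$ degenerates. This causes no difficulty, however, since in Theorem \ref{thm:nwrtpdrl} the derivative $\frac{\partial_r g}{\partial q^{f^{*}\mu}}$ always appears premultiplied by $f$: on $\{f\neq0\}$ the two bases coincide, and on $\{f=0\}$ both $f\frac{\partial_r g}{\partial q^{f^{*}\mu}}$ and $f\frac{\partial_r g}{\partial q^{\mu}}$ vanish. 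Hence $f\frac{\partial_r g}{\partial q^{f^{*}\mu}}=f\frac{\partial_r g}{\partial q^{\mu}}$ holds everywhere, completing the argument; the conjugate identity is handled identically with $\mu$ replaced by $\mu^{*}$.
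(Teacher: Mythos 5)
Your proof is correct and takes essentially the same approach as the paper: the paper proves the left-hand analogue (Corollary \ref{cor:leftprodrl}) precisely by observing that $q^{g\mu}=q^{\mu}$ and $q^{g\mu*}=q^{\mu*}$ for a real-valued factor, and declares the right-hand case conformal, which is exactly your reduction $q^{f^{*}\mu}=q^{\mu}$ applied to Theorem \ref{thm:nwrtpdrl}. Your additional care at the zeros of $f$ is a detail the paper silently glosses over, but it does not alter the argument.
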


The proofs of Theorem \ref{thm:nwrtpdrl}, Corollary \ref{cor:nwprightpdrl} and Corollary \ref{cor:rightprodrl} are conformal with Theorem \ref{thm:nwpleftpdrl}, Corollary \ref{cor:nwpleftpdrl} and Corollary \ref{cor:leftprodrl}, and thus omitted.

\subsection{The Chain Rule.}
Another advantage of the GHR derivative is defined in Definition \ref{def:leftghr} and Definition \ref{def:rightghr} is that the chain rule can be obtained in a very simple form, and is formulated in the following theorem.
\begin{theorem}[(Chain Rule of Left GHR)]\label{thm:nwpleftchainrl}
 Let $S\subseteq \mathbb{H}$ and suppose $g:S\rightarrow \mathbb{H}$ has the left GHR derivative at an interior point $q$ of the set $S$. Let $T\subseteq \mathbb{H}$ be such that $g(q)\in T$ for all $q \in S$. Assume $f:T\rightarrow \mathbb{H}$ has left GHR derivatives at an inner point $g(q)\in T$, then the left GHR derivatives of the composite function $f(g(q))$ are as follows:
\begin{equation}
\begin{split}
&\frac{\partial f(g(q))}{\partial q^{\mu}}=\frac{\partial f}{\partial g^{\nu}}\frac{\partial g^{\nu}}{\partial q^{\mu}}+\frac{\partial f}{\partial g^{\nu i}}\frac{\partial g^{\nu i}}{\partial q^{\mu}}+\frac{\partial f}{\partial g^{\nu j}}\frac{\partial g^{\nu j}}{\partial q^{\mu}}+\frac{\partial f}{\partial g^{\nu k}}\frac{\partial g^{\nu k}}{\partial q^{\mu}}\\
&\frac{\partial f(g(q))}{\partial q^{\mu*}}=\frac{\partial f}{\partial g^{\nu}}\frac{\partial g^{\nu}}{\partial q^{\mu*}}+\frac{\partial f}{\partial g^{\nu i}}\frac{\partial g^{\nu i}}{\partial q^{\mu*}}+\frac{\partial f}{\partial g^{\nu j}}\frac{\partial g^{\nu j}}{\partial q^{\mu*}}+\frac{\partial f}{\partial g^{\nu k}}\frac{\partial g^{\nu k}}{\partial q^{\mu*}}
\end{split}
\end{equation}
\begin{equation}
\begin{split}
&\frac{\partial f(g(q))}{\partial q^{\mu}}=\frac{\partial f}{\partial g^{\nu*}}\frac{\partial g^{\nu*}}{\partial q^{\mu}}+\frac{\partial f}{\partial g^{\nu i*}}\frac{\partial g^{\nu i*}}{\partial q^{\mu}}+\frac{\partial f}{\partial g^{\nu j*}}\frac{\partial g^{\nu j*}}{\partial q^{\mu}}+\frac{\partial f}{\partial g^{\nu k*}}\frac{\partial g^{\nu k*}}{\partial q^{\mu}}\\
&\frac{\partial f(g(q))}{\partial q^{\mu*}}=\frac{\partial f}{\partial g^{\nu*}}\frac{\partial g^{\nu*}}{\partial q^{\mu*}}+\frac{\partial f}{\partial g^{\nu i*}}\frac{\partial g^{\nu i*}}{\partial q^{\mu*}}+\frac{\partial f}{\partial g^{\nu j*}}\frac{\partial g^{\nu j*}}{\partial q^{\mu*}}+\frac{\partial f}{\partial g^{\nu k*}}\frac{\partial g^{\nu k*}}{\partial q^{\mu*}}
\end{split}
\end{equation}
where $\mu, \nu\in\mathbb{H},\mu\nu \neq 0$.
\end{theorem}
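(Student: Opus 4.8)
The plan is to reduce the statement to the ordinary (real) multivariate chain rule and then re-express the outcome in the $\nu$-basis. Since the left GHR derivative of Definition~\ref{def:leftghr} is an $\mathbb{R}$-linear combination of the four real partials $\frac{\partial}{\partial q_a},\dots,\frac{\partial}{\partial q_d}$, and $f(g(q))$ is a composition of real-differentiable maps $\mathbb{R}^{4}\to\mathbb{R}^{4}\to\mathbb{H}$ (which is exactly what possessing the GHR derivative entails), I would first record, for each $s\in\{a,b,c,d\}$, the classical chain rule $\frac{\partial f(g(q))}{\partial q_s}=\sum_{t}\frac{\partial f}{\partial g_t}\frac{\partial g_t}{\partial q_s}$, where the coefficients $\frac{\partial g_t}{\partial q_s}$ are real. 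Substituting these into Definition~\ref{def:leftghr} and using that the real scalars $\frac{\partial g_t}{\partial q_s}$ commute with $i^{\mu},j^{\mu},k^{\mu}$, I can pull $\frac{\partial f}{\partial g_t}$ out on the left and obtain the intermediate identity $\frac{\partial f(g(q))}{\partial q^{\mu}}=\sum_{t}\frac{\partial f}{\partial g_t}\frac{\partial g_t}{\partial q^{\mu}}$ (and the analogue with $q^{\mu*}$), in which $f$ is still differentiated against the real components of $g$.

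The heart of the argument is then to show that the right-hand side of the theorem collapses onto this intermediate identity. I would expand both factors against the real partials. On one side, Definition~\ref{def:leftghr} gives $\frac{\partial f}{\partial g^{\nu\eta}}=\frac14\big(\frac{\partial f}{\partial g_a}-\frac{\partial f}{\partial g_b}i^{\nu\eta}-\frac{\partial f}{\partial g_c}j^{\nu\eta}-\frac{\partial f}{\partial g_d}k^{\nu\eta}\big)$ for $\eta\in\{1,i,j,k\}$; by the homomorphism property \eqref{pr:def1qmunu} one has $i^{\nu\eta}=(i^{\eta})^{\nu}$, and since $i^{\eta},j^{\eta},k^{\eta}$ merely change sign as $\eta$ runs over $\{1,i,j,k\}$, these reduce to $i^{\nu},j^{\nu},k^{\nu}$ multiplied by a fixed $\{+1,-1\}$ sign pattern $H_{\eta t}$. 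On the other side, writing $g^{\nu\eta}=g_a\pm i^{\nu}g_b\pm j^{\nu}g_c\pm k^{\nu}g_d$ as in \eqref{eq:qijkmurot} (in the $\nu$-basis) and applying the left-constant rule from \eqref{pr:leftghrpr1}, I get $\frac{\partial g^{\nu\eta}}{\partial q^{\mu}}=\frac{\partial g_a}{\partial q^{\mu}}\pm i^{\nu}\frac{\partial g_b}{\partial q^{\mu}}\pm\cdots$ with the \emph{same} sign pattern $H_{\eta t}$ appearing, now to the left of each factor.

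Multiplying the two expansions and summing over $\eta$, each cross term acquires the factor $\sum_{\eta}H_{\eta t}H_{\eta s}=4\delta_{ts}$, the orthogonality of the four sign patterns, so only the diagonal terms $t=s$ survive. In every surviving term the unit $i^{\nu}$ (resp. $j^{\nu},k^{\nu}$) sits to the right of $\frac{\partial f}{\partial g_t}$ and to the left of $\frac{\partial g_t}{\partial q^{\mu}}$, and they meet in the middle as $(-i^{\nu})(i^{\nu})=-(i^{\nu})^{2}=1$ by \eqref{pr:prodijkmurl}; hence each term becomes exactly $\frac{\partial f}{\partial g_t}\frac{\partial g_t}{\partial q^{\mu}}$ and the sum reproduces the intermediate identity. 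I expect the main obstacle to be precisely this non-commutative bookkeeping: because the $\nu$-basis units occur on opposite sides of the two factors, one must check that they annihilate in the middle rather than rotating either factor, and it is this that both forces the placement adopted in Definition~\ref{def:leftghr} and makes the final result independent of the auxiliary $\nu$. Finally, the $q^{\mu*}$ formulas follow verbatim from the $q^{\mu*}$ line of Definition~\ref{def:leftghr}, while the two formulas written in the conjugate involutions $g^{\nu\eta*}$ follow from the identical orthogonality-and-cancellation argument applied to the conjugate resolution \eqref{eq:z1conjqlink}, the conjugate involutions carrying the negated sign pattern so that the middle product again equals $1$.
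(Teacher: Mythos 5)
Your proof is correct, and it uses the same raw ingredients as the paper's Appendix C argument — the real four-variable chain rule, the expansion of Definition \ref{def:leftghr}, and the orthogonality of the $\{1,i,j,k\}$ sign patterns — but it is organized around a different intermediate identity and runs in the opposite direction. The paper first proves Lemma \ref{lem:qvfchainrl}, a chain rule for the real partials: it expresses $\frac{\partial f}{\partial g_t}$ through the GHR derivatives $\frac{\partial f}{\partial g^{\nu\eta}}$ (units on the right) and resolves $g_t$ through the $g^{\nu\eta}$ (units on the left), so that the Hadamard-type cancellation happens at the level of $\partial/\partial\xi$, $\xi\in\{q_a,q_b,q_c,q_d\}$; the theorem then follows by inserting that lemma into Definition \ref{def:leftghr} for the composite and regrouping the result into the factors $\frac{\partial g^{\nu\eta}}{\partial q^{\mu}}$. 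You instead convert the $q$-side first, establishing the mixed identity $\frac{\partial f(g(q))}{\partial q^{\mu}}=\sum_t\frac{\partial f}{\partial g_t}\frac{\partial g_t}{\partial q^{\mu}}$, and then show that the asserted right-hand side collapses onto it via $\sum_{\eta}H_{\eta t}H_{\eta s}=4\delta_{ts}$ and the middle cancellation $\bigl(-e_t^{\nu}\bigr)\bigl(e_t^{\nu}\bigr)=1$ for $e_t\in\{i,j,k\}$. The two computations are the same algebra performed in transposed order — the paper's ``distributive law and merging of similar items'' inside Lemma \ref{lem:qvfchainrl} is precisely your orthogonality step — but your packaging has the advantage of making explicit both why the auxiliary $\nu$ drops out of the final answer and exactly where the non-commutativity is absorbed (the rotated units annihilating between the two factors), whereas the paper's route isolates a reusable lemma whose structure parallels its product-rule proof. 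Your one-line treatment of the conjugate cases is also sound: negating the sign pattern on the imaginary components of both factors leaves the orthogonality relation and the value of the middle product unchanged.
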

\begin{proof}
The proof of Theorem \ref{thm:nwpleftchainrl} is given in Appendix C.
\end{proof}

\begin{corollary}[(Chain Rule of Left HR)]\label{cor:leftchainrl}
Let $S\subseteq \mathbb{H}$ and suppose $g:S\rightarrow \mathbb{H}$ has the left HR derivative at an interior point $q$ of the set $S$. Let $T\subseteq \mathbb{H}$ be such that $g(q)\in T$ for all $q \in S$. Assume $f:T\rightarrow \mathbb{H}$ has left HR derivatives at an inner point $g(q)\in T$, then the left HR derivatives of the composite function $f(g(q))$ are as follows:
\begin{equation}
\begin{split}
&\frac{\partial f(g(q))}{\partial q^{\mu}}=\frac{\partial f}{\partial g}\frac{\partial g}{\partial q^{\mu}}+\frac{\partial f}{\partial g^{i}}\frac{\partial g^{i}}{\partial q^{\mu}}+\frac{\partial f}{\partial g^{j}}\frac{\partial g^{j}}{\partial q^{\mu}}+\frac{\partial f}{\partial g^{k}}\frac{\partial g^{k}}{\partial q^{\mu}}\\
&\frac{\partial f(g(q))}{\partial q^{\mu*}}=\frac{\partial f}{\partial g}\frac{\partial g}{\partial q^{\mu*}}+\frac{\partial f}{\partial g^{i}}\frac{\partial g^{i}}{\partial q^{\mu*}}+\frac{\partial f}{\partial g^{j}}\frac{\partial g^{j}}{\partial q^{\mu*}}+\frac{\partial f}{\partial g^{k}}\frac{\partial g^{k}}{\partial q^{\mu*}}
\end{split}
\end{equation}
\begin{equation}
\begin{split}
&\frac{\partial f(g(q))}{\partial q^{\mu}}=\frac{\partial f}{\partial g^{*}}\frac{\partial g^{*}}{\partial q^{\mu}}+\frac{\partial f}{\partial g^{i*}}\frac{\partial g^{i*}}{\partial q^{\mu}}+\frac{\partial f}{\partial g^{j*}}\frac{\partial g^{j*}}{\partial q^{\mu}}+\frac{\partial f}{\partial g^{k*}}\frac{\partial g^{k*}}{\partial q^{\mu}}\\
&\frac{\partial f(g(q))}{\partial q^{\mu*}}=\frac{\partial f}{\partial g^{*}}\frac{\partial g^{*}}{\partial q^{\mu*}}+\frac{\partial f}{\partial g^{i*}}\frac{\partial g^{i*}}{\partial q^{\mu*}}+\frac{\partial f}{\partial g^{j*}}\frac{\partial g^{j*}}{\partial q^{\mu*}}+\frac{\partial f}{\partial g^{k*}}\frac{\partial g^{k*}}{\partial q^{\mu*}}
\end{split}
\end{equation}
where $\mu \in \{1,i,j,k\}$.
\end{corollary}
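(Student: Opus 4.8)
The plan is to obtain Corollary \ref{cor:leftchainrl} as a direct specialization of Theorem \ref{thm:nwpleftchainrl}, exactly in the spirit of how Corollary \ref{cor:nwpleftpdrl} was deduced from the product rule Theorem \ref{thm:nwpleftpdrl}. The key observation is that the left HR derivatives are nothing but the left GHR derivatives evaluated at the four basis directions $\mu\in\{1,i,j,k\}$, as recorded in the identification \eqref{pr:leftghrpr2}. Accordingly, the first step is to fix the auxiliary rotation parameter $\nu=1$ in the statement of Theorem \ref{thm:nwpleftchainrl}; this is permissible since the side condition $\mu\nu\neq 0$ then reduces to $\mu\neq 0$, which holds for all four basis values of $\mu$.

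Setting $\nu=1$ collapses the auxiliary involutions appearing in the theorem. Using $q^{1}=q$ together with $q^{1\eta}=\eta q\eta^{-1}=q^{\eta}$ for each $\eta\in\{i,j,k\}$ (which follows from \eqref{pr:def1qmunu}), we obtain $g^{\nu}=g$, $g^{\nu i}=g^{i}$, $g^{\nu j}=g^{j}$, and $g^{\nu k}=g^{k}$, and likewise $g^{\nu*}=g^{*}$, $g^{\nu i*}=g^{i*}$, and so on for the conjugate terms. Hence each factor $\frac{\partial f}{\partial g^{\nu}}$ becomes $\frac{\partial f}{\partial g}$, each $\frac{\partial f}{\partial g^{\nu i}}$ becomes $\frac{\partial f}{\partial g^{i}}$, and the four-term expansions of the theorem turn termwise into the four-term expansions claimed in the corollary.

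The second step is to restrict the outer derivative to $\mu\in\{1,i,j,k\}$. By the identification \eqref{pr:leftghrpr2}, for these four values of $\mu$ the left GHR derivative $\frac{\partial}{\partial q^{\mu}}$ coincides with the corresponding left HR derivative of Definition \ref{def:lefthr}; the same identification applies simultaneously to the inner factors $\frac{\partial g^{\eta}}{\partial q^{\mu}}$. Consequently every GHR derivative occurring in the specialized formulas is in fact the intended HR derivative, and the four displayed identities of the corollary follow verbatim.

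Since the result is a pure specialization, I do not anticipate any genuine analytic obstacle: no new limiting arguments, differentiability estimates, or orthogonality computations are needed beyond those already carried out in establishing Theorem \ref{thm:nwpleftchainrl}. The only point requiring care is the bookkeeping of the rotation indices — one must verify that the substitution $\nu=1$ sends each involution $g^{\nu\eta}$ to precisely $g^{\eta}$ and that the basis identification \eqref{pr:leftghrpr2} is applied consistently to both the outer derivative and the inner derivatives, so that no residual rotation $(\cdot)^{\mu}$ or $(\cdot)^{\nu}$ is inadvertently left unsimplified.
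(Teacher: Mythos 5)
Your proposal is correct and takes essentially the same route as the paper, whose entire proof reads ``Set $\nu=1$ in Theorem \ref{thm:nwpleftchainrl}, then the corollary follows.'' The additional bookkeeping you supply --- collapsing $g^{\nu\eta}$ to $g^{\eta}$ via \eqref{pr:def1qmunu} and identifying the GHR derivatives at $\mu\in\{1,i,j,k\}$ with the HR derivatives via \eqref{pr:leftghrpr2} --- is precisely the implicit content of that one-line specialization.
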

\begin{proof}
Set $\nu=1$ in Theorem \ref{thm:nwpleftchainrl}, then the corollary follows.
\end{proof}

Theorem \ref{thm:nwpleftchainrl} is also valid for complex-valued and real-valued composite functions of quaternion variables stated in the following two corollaries, whose proofs are the same as that of Theorem \ref{thm:nwpleftchainrl}, thus omitted.
\begin{corollary}\label{cor:leftcompchainrl}
Let $S\subseteq \mathbb{H}$ and suppose $g:S\rightarrow \mathbb{C}$ has the left GHR derivative at an interior point $q$ of the set $S$. Let $T\subseteq \mathbb{C}$ be such that $g(q)\in T$ for all $q \in S$. Assume $f:T\rightarrow \mathbb{C}$ has the CR derivatives at an inner point $g(q)\in T$, then the left GHR derivatives of the composite function $f(g(q))$ are as follows:
\begin{equation}
\begin{split}
&\frac{\partial f(g(q))}{\partial q^{\mu}}=\frac{\partial f}{\partial g}\frac{\partial g}{\partial q^{\mu}}+\frac{\partial f}{\partial g^*}\frac{\partial g^*}{\partial q^{\mu}},\quad
\frac{\partial f(g(q))}{\partial q^{\mu*}}=\frac{\partial f}{\partial g}\frac{\partial g}{\partial q^{\mu*}}+\frac{\partial f}{\partial g^*}\frac{\partial g^*}{\partial q^{\mu*}}
\end{split}
\end{equation}
where $\mu\in\mathbb{H},\mu \neq 0$, $\frac{\partial f}{\partial g}$ and $\frac{\partial f}{\partial g^*}$ are the CR derivatives in CR calculus.
\end{corollary}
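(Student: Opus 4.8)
The plan is to obtain this corollary as a specialization of the left HR chain rule in Corollary \ref{cor:leftchainrl} (equivalently, Theorem \ref{thm:nwpleftchainrl} with $\nu=1$), exploiting the fact that both $f$ and $g$ take values in $\mathbb{C}=\mathrm{span}\{1,i\}$. Since a complex-valued function is in particular $\mathbb{H}$-valued, Corollary \ref{cor:leftchainrl} applies verbatim and writes $\partial f(g(q))/\partial q^{\mu}$ as a sum of four terms involving $g,g^i,g^j,g^k$. The entire argument then reduces to showing that, under the complex-valuedness hypothesis, these four terms collapse pairwise into the two CR terms in the statement.

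First I would record how the involutions of $g$ degenerate. Writing $g=g_a+ig_b$ with $g_c=g_d=0$, the definitions in \eqref{eq:qijkrot} give $g^i=g$ and $g^j=g^k=g^*$, so that $\partial g^i/\partial q^{\mu}=\partial g/\partial q^{\mu}$ and $\partial g^j/\partial q^{\mu}=\partial g^k/\partial q^{\mu}=\partial g^*/\partial q^{\mu}$. Next I would relate the left HR derivatives of $f$ (viewed as a function of the quaternion $g$) to its CR derivatives. Because $f$ is complex-valued and depends only on $g_a,g_b$, the partials $\partial f/\partial g_c,\partial f/\partial g_d$ vanish, and since the factor $i$ commutes with the complex quantity $\partial f/\partial g_b$, Definition \ref{def:lefthr} reduces to the identities $\partial f/\partial g=\partial f/\partial g^i=\tfrac12(\partial f/\partial g)_{\mathrm{CR}}$ and $\partial f/\partial g^j=\partial f/\partial g^k=\tfrac12(\partial f/\partial g^*)_{\mathrm{CR}}$, the factor $\tfrac12$ being the only discrepancy between the HR normalization ($\tfrac14$) and the CR normalization ($\tfrac12$).

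Substituting these two observations into Corollary \ref{cor:leftchainrl} then does the work: the first and second terms both equal $\tfrac12(\partial f/\partial g)_{\mathrm{CR}}\,\partial g/\partial q^{\mu}$ and the third and fourth both equal $\tfrac12(\partial f/\partial g^*)_{\mathrm{CR}}\,\partial g^*/\partial q^{\mu}$, so the doubling cancels the factor $\tfrac12$ and yields exactly $\partial f(g(q))/\partial q^{\mu}=(\partial f/\partial g)\,\partial g/\partial q^{\mu}+(\partial f/\partial g^*)\,\partial g^*/\partial q^{\mu}$. The $q^{\mu*}$ identity follows by replacing $q^{\mu}$ with $q^{\mu*}$ throughout. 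A more self-contained route, which I would keep in reserve, is to start from the ordinary real chain rule $\partial(f\circ g)/\partial q_s=(\partial f/\partial g_a)(\partial g_a/\partial q_s)+(\partial f/\partial g_b)(\partial g_b/\partial q_s)$, assemble it through Definition \ref{def:leftghr} (pulling the constants $\partial f/\partial g_a,\partial f/\partial g_b$ out on the left, which is legitimate since the coefficients $\partial g_s/\partial q_t$ are real and the involution units stay on the right), and then convert $(g_a,g_b)\leftrightarrow(g,g^*)$ by the CR identities; the cross terms cancel and the same formula drops out.

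The step I expect to be the genuine obstacle is the bookkeeping of noncommutativity in the derivative conversion, rather than any deep difficulty. Specifically, both the reduction of the HR derivatives of $f$ and the cancellation of the cross terms rely on $i$ commuting with the CR derivatives $\partial f/\partial g,\partial f/\partial g^*$, which holds precisely because $f$ is $\mathbb{C}$-valued; this is exactly where the hypotheses $g(q)\in\mathbb{C}$ and $f:T\to\mathbb{C}$ are indispensable. By contrast, the inner GHR derivatives $\partial g/\partial q^{\mu},\partial g^*/\partial q^{\mu}$ are in general full quaternions, but they always sit on the right of the complex factors and are never required to commute with anything, so no further care is needed there.
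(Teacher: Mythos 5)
Your argument is correct in substance, but your primary route is genuinely different from the paper's. The paper does not prove this corollary by specializing Corollary \ref{cor:leftchainrl}; it declares the proof to be ``the same as that of Theorem \ref{thm:nwpleftchainrl}'', i.e.\ one reruns the Appendix~C argument (Lemma \ref{lem:qvfchainrl} plus assembly through Definition \ref{def:leftghr}) with the two-term real chain rule $\frac{\partial (f\circ g)}{\partial \xi}=\frac{\partial f}{\partial g_a}\frac{\partial g_a}{\partial \xi}+\frac{\partial f}{\partial g_b}\frac{\partial g_b}{\partial \xi}$, $\xi\in\{q_a,q_b,q_c,q_d\}$, and then converts $(g_a,g_b)$ to $(g,g^*)$ via the CR identities. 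That is exactly your ``reserve'' argument, so your fallback coincides with the paper's intended proof. Your primary argument --- apply the four-term HR chain rule of Corollary \ref{cor:leftchainrl} and collapse the terms using $g^i=g$, $g^j=g^k=g^*$ from \eqref{eq:qijkrot} together with the halving relations $\frac{\partial f}{\partial g}=\frac{\partial f}{\partial g^i}=\frac{1}{2}\frac{\partial f}{\partial g}\big|_{\rm CR}$ and $\frac{\partial f}{\partial g^j}=\frac{\partial f}{\partial g^k}=\frac{1}{2}\frac{\partial f}{\partial g^*}\big|_{\rm CR}$ --- is an attractive alternative: it reuses an already-proven theorem rather than repeating its proof, and it shows transparently how the four involution terms pair off into the two CR terms, with the factor $\frac{1}{2}$ absorbed by the doubling. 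All the identities you invoke check out against Definitions \ref{def:lefthr} and \ref{def:leftghr}, including the crucial commutation of $i$ with the complex quantities $\frac{\partial f}{\partial g_a}$, $\frac{\partial f}{\partial g_b}$, which is indeed where the $\mathbb{C}$-valuedness of $f$ enters.

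There is one wrinkle you should patch in the primary route. Corollary \ref{cor:leftchainrl} requires $f$ to have left HR derivatives at an \emph{inner} point $g(q)$ of $T\subseteq\mathbb{H}$, but here $T\subseteq\mathbb{C}$, which has empty interior in $\mathbb{H}$: the partials $\frac{\partial f}{\partial g_c}$, $\frac{\partial f}{\partial g_d}$ are not zero but \emph{undefined}, so the corollary does not apply ``verbatim'' as you claim. The fix is to first extend $f$ to an $\mathbb{H}$-neighborhood of $T$ by $\tilde f(g_a+ig_b+jg_c+kg_d)=f(g_a+ig_b)$; then $\tilde f\circ g=f\circ g$ because $g$ is $\mathbb{C}$-valued, $\frac{\partial \tilde f}{\partial g_c}=\frac{\partial \tilde f}{\partial g_d}=0$, and your computation proceeds unchanged. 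This is a one-line repair, not a flaw in the idea, but without it the appeal to Corollary \ref{cor:leftchainrl} is not literally licensed; the paper's route (your reserve argument) sidesteps the issue entirely, since the two-variable real chain rule never refers to $g_c$ or $g_d$.
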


\begin{corollary}\label{cor:leftrealchainrl}
Let $S\subseteq \mathbb{H}$ and suppose $g:S\rightarrow \mathbb{R}$ has the left GHR derivative at an interior point $q$ of the set $S$. Let $T\subseteq \mathbb{R}$ be such that $g(q)\in T$ for all $q \in S$. Assume $f:T\rightarrow \mathbb{R}$ has real derivatives at an inner point $f(q)\in T$, then the left GHR derivatives of the composite function $f(g(q))$ are as follows:
\begin{equation}
\begin{split}
&\frac{\partial f(g(q))}{\partial q^{\mu}}=f'(g)\frac{\partial g}{\partial q^{\mu}},\quad
\frac{\partial f(g(q))}{\partial q^{\mu*}}=f'(g)\frac{\partial g}{\partial q^{\mu*}}
\end{split}
\end{equation}
where $\mu\in\mathbb{H},\mu \neq 0$ and $f'(g)$ is the real derivatives of real-valued function.
\end{corollary}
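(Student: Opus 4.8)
The plan is to prove the statement directly from Definition \ref{def:leftghr} together with the ordinary chain rule for real functions, exploiting the fact that both $g$ and $f$ are real-valued. The central simplification is that a real number lies in the center of $\mathbb{H}$ and hence commutes with every quaternion, in particular with the basis elements $i^{\mu}, j^{\mu}, k^{\mu}$.

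First I would write the composite $h(q) = f(g(q))$ as a real-valued function of the four real coordinates $q_a, q_b, q_c, q_d$. Since $g$ maps into $\mathbb{R}$ and $f$ is differentiable on $\mathbb{R}$, the classical (real) chain rule gives $\frac{\partial h}{\partial q_s} = f'(g)\frac{\partial g}{\partial q_s}$ for each $s \in \{a,b,c,d\}$, where $f'(g)$ is the ordinary real derivative appearing in the statement.

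Next I would substitute these four identities into Definition \ref{def:leftghr}. For the unstarred derivative this reads
\[
\frac{\partial h}{\partial q^{\mu}} = \frac{1}{4}\left(\frac{\partial h}{\partial q_a} - \frac{\partial h}{\partial q_b}i^{\mu} - \frac{\partial h}{\partial q_c}j^{\mu} - \frac{\partial h}{\partial q_d}k^{\mu}\right).
\]
Because $f'(g)$ is real it factors out on the left past each of $i^{\mu}, j^{\mu}, k^{\mu}$, leaving precisely $f'(g)\cdot\frac{1}{4}(\frac{\partial g}{\partial q_a} - \frac{\partial g}{\partial q_b}i^{\mu} - \frac{\partial g}{\partial q_c}j^{\mu} - \frac{\partial g}{\partial q_d}k^{\mu}) = f'(g)\frac{\partial g}{\partial q^{\mu}}$. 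The starred case is identical with the three signs reversed, yielding $f'(g)\frac{\partial g}{\partial q^{\mu*}}$. This establishes both formulas for every nonzero $\mu\in\mathbb{H}$.

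The only point requiring care---and the closest thing to an obstacle---is the commutation step: one must verify that the real scalar $f'(g)$ genuinely commutes with the generally non-real basis elements $i^{\mu}, j^{\mu}, k^{\mu}$, so that it can be pulled uniformly to the left of the bracket; this is immediate since $\mathfrak{R}(\mathbb{H})$ is the center of $\mathbb{H}$. Alternatively, the corollary can be derived as a specialization of Theorem \ref{thm:nwpleftchainrl}: taking $\nu = 1$ and using that $g$ real forces $g^{i}=g^{j}=g^{k}=g$, so that $\frac{\partial g^{\eta}}{\partial q^{\mu}}=\frac{\partial g}{\partial q^{\mu}}$ for all involutions, the four summands collapse to a common factor $\frac{\partial g}{\partial q^{\mu}}$ multiplied by $\frac{\partial f}{\partial g}+\frac{\partial f}{\partial g^i}+\frac{\partial f}{\partial g^j}+\frac{\partial f}{\partial g^k}$; since $f$ depends only on the scalar part of its real argument, each of these four equals $\frac{1}{4}f'(g)$, and their sum is exactly $f'(g)$.
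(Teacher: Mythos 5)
Your proof is correct. The paper itself omits the proof of this corollary, remarking only that it is essentially the same as that of Theorem~\ref{thm:nwpleftchainrl}; that proof (Appendix~C) runs through Lemma~\ref{lem:qvfchainrl} --- the chain rule for the real partials $\partial/\partial\xi$, $\xi\in\{q_a,q_b,q_c,q_d\}$ --- followed by substitution into Definition~\ref{def:leftghr} and regrouping. Your first argument is precisely this computation specialized to real $f$ and $g$: the ordinary real chain rule replaces Lemma~\ref{lem:qvfchainrl}, and the regrouping step degenerates to pulling the scalar $f'(g)$ out of the bracket, which, as you note, is trivial since real numbers are central in $\mathbb{H}$ (indeed here all four partials $\partial h/\partial q_s$ are themselves real). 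So your main route coincides with the paper's intended one. Your alternative route --- setting $\nu=1$ in Theorem~\ref{thm:nwpleftchainrl}, using $g^i=g^j=g^k=g$ for real $g$, and evaluating each formal derivative $\frac{\partial f}{\partial g^{\eta}}=\frac{1}{4}f'(g)$ so that the four terms sum to $f'(g)\frac{\partial g}{\partial q^{\mu}}$ --- is a genuinely different derivation that reuses the already-proven theorem instead of re-running the coordinate computation, and is closer in spirit to how the paper obtains Corollary~\ref{cor:leftchainrl} (``set $\nu=1$''). Its one extra requirement is the interpretive step you take implicitly: $f$, defined only on a real interval, must be regarded as a function of a quaternion variable that depends only on the real part $g_a$, so that the formal HR derivatives $\frac{\partial f}{\partial g^{\eta}}$ are defined and equal $\frac{1}{4}\frac{\partial f}{\partial g_a}$. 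With that reading, both arguments are sound, and the first is self-contained.
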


\begin{theorem}[(Chain Rule of Right GHR)]\label{thm:nwprightchainrl}
Let $S\subseteq \mathbb{H}$ and suppose $g:S\rightarrow \mathbb{H}$ has the right GHR derivative at an interior point $q$ of the set $S$. Let $T\subseteq \mathbb{H}$ be such that $g(q)\in T$ for all $q \in S$. Assume $f:T\rightarrow \mathbb{H}$ has right GHR derivatives at an inner point $g(q)\in T$, then the right GHR derivatives of the composite function $f(g(q))$ are as follows:
\begin{equation}
\begin{split}
&\frac{\partial_r f(g(q))}{\partial q^{\mu}}=\frac{\partial_r g^{\nu}}{\partial q^{\mu}}\frac{\partial_r f}{\partial g^{\nu}}+\frac{\partial_r g^{\nu i}}{\partial q^{\mu}}\frac{\partial_r f}{\partial g^{\nu i}}+\frac{\partial_r g^{\nu j}}{\partial q^{\mu}}\frac{\partial_r f}{\partial g^{\nu j}}+\frac{\partial_r g^{\nu k}}{\partial q^{\mu}}\frac{\partial_r f}{\partial g^{\nu k}}\\
&\frac{\partial_r f(g(q))}{\partial q^{\mu*}}=\frac{\partial_r g^{\nu}}{\partial q^{\mu*}}\frac{\partial_r f}{\partial g^{\nu}}+\frac{\partial_r g^{\nu i}}{\partial q^{\mu*}}\frac{\partial_r f}{\partial g^{\nu i}}+\frac{\partial_r g^{\nu j}}{\partial q^{\mu*}}\frac{\partial_r f}{\partial g^{\nu j}}+\frac{\partial_r g^{\nu k}}{\partial q^{\mu*}}\frac{\partial_r f}{\partial g^{\nu k}}
\end{split}
\end{equation}
\begin{equation}
\begin{split}
&\frac{\partial_r f(g(q))}{\partial q^{\mu}}=\frac{\partial_r g^{\nu*}}{\partial q^{\mu}}\frac{\partial_r f}{\partial g^{\nu*}}+\frac{\partial_r g^{\nu i*}}{\partial q^{\mu}}\frac{\partial_r f}{\partial g^{\nu i*}}+\frac{\partial_r g^{\nu j*}}{\partial q^{\mu}}\frac{\partial_r f}{\partial g^{\nu j*}}+\frac{\partial_r g^{\nu k*}}{\partial q^{\mu}}\frac{\partial_r f}{\partial g^{\nu k*}}\\
&\frac{\partial_r f(g(q))}{\partial q^{\mu*}}=\frac{\partial_r g^{\nu*}}{\partial q^{\mu*}}\frac{\partial_r f}{\partial g^{\nu*}}+\frac{\partial_r g^{\nu i*}}{\partial q^{\mu*}}\frac{\partial_r f}{\partial g^{\nu i*}}+\frac{\partial_r g^{\nu j*}}{\partial q^{\mu*}}\frac{\partial_r f}{\partial g^{\nu j*}}+\frac{\partial_r g^{\nu k*}}{\partial q^{\mu*}}\frac{\partial_r f}{\partial g^{\nu k*}}
\end{split}
\end{equation}
where $\mu, \nu\in\mathbb{H},\mu\nu \neq 0$.
\end{theorem}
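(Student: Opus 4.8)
The plan is to deduce the right chain rule from the already-established left chain rule (Theorem~\ref{thm:nwpleftchainrl}) via the left--right conjugation identities \eqref{eq:gleftrightlink}, rather than repeating the full argument behind it. The starting observation is that both the left and the right GHR derivatives of a function are defined (Definitions~\ref{def:leftghr} and~\ref{def:rightghr}) purely through the four real partials $\partial(\cdot)/\partial q_a,\dots,\partial(\cdot)/\partial q_d$. Hence the hypothesis that $g$ has a right GHR derivative at $q$ is equivalent to $g$ being real-differentiable there, which in turn guarantees that $g$ (and also $g^*$) has a left GHR derivative, and likewise that $f$ has a right GHR derivative at $g(q)$ iff $f^*$ has a left GHR derivative there. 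This lets me freely invoke Theorem~\ref{thm:nwpleftchainrl} applied to the conjugated outer function.

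Write $F=f\circ g$, so that $F^*=f^*\circ g$ where $f^*$ denotes the function $w\mapsto(f(w))^*$. To obtain, say, the first identity of the theorem, I would first use \eqref{eq:gleftrightlink} to pass from the right derivative of $F$ to a conjugated left derivative,
\[
\frac{\partial_r (f\circ g)}{\partial q^{\mu}}=\left(\frac{\partial (f^*\circ g)}{\partial q^{\mu*}}\right)^{*},
\]
and then expand the inner left derivative using the second pair of Theorem~\ref{thm:nwpleftchainrl} (the one involving the conjugated involutions $g^{\nu*},g^{\nu i*},g^{\nu j*},g^{\nu k*}$),
\[
\frac{\partial (f^*\circ g)}{\partial q^{\mu*}}=\sum_{\eta\in\{1,i,j,k\}}\frac{\partial f^*}{\partial g^{\nu\eta*}}\,\frac{\partial g^{\nu\eta*}}{\partial q^{\mu*}}.
\]

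Conjugating this sum and applying $(AB)^*=B^*A^*$ reverses each product, which is precisely the source of the $g$-before-$f$ ordering peculiar to the right chain rule. It then remains to convert each conjugated left factor back into a right GHR derivative through \eqref{eq:gleftrightlink}: on the inner ($q$-)variable one gets $(\partial g^{\nu\eta*}/\partial q^{\mu*})^{*}=\partial_r(g^{\nu\eta*})^{*}/\partial q^{\mu}=\partial_r g^{\nu\eta}/\partial q^{\mu}$, using the involution identity $(g^{\nu\eta*})^{*}=g^{\nu\eta}$, while on the outer ($g$-)variable one gets $(\partial f^*/\partial g^{\nu\eta*})^{*}=\partial_r f/\partial g^{\nu\eta}$. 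Assembling these reproduces $\sum_{\eta}(\partial_r g^{\nu\eta}/\partial q^{\mu})(\partial_r f/\partial g^{\nu\eta})$, the first claimed formula; the remaining three identities follow identically, swapping $\mu\leftrightarrow\mu*$ and selecting the unstarred or starred pair of the left chain rule so that the single conjugation flips the stars into the displayed form. I expect the only genuine difficulty to be the bookkeeping of conjugation, order-reversal and rotation superscripts: one must apply \eqref{eq:gleftrightlink} with the general (non-unit) index $\nu\eta$ on both the inner and outer variables, use the star-cancellations consistently, and pair each of the four target equations with the correct pair of Theorem~\ref{thm:nwpleftchainrl}. A purely mechanical alternative would be to mirror the derivation of the left chain rule line by line with the right GHR definition in place of the left one, but the conjugation route above is shorter and reuses Theorem~\ref{thm:nwpleftchainrl} directly.
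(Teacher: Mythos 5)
Your proof is correct, but it takes a genuinely different route from the paper's. The paper does not re-derive this theorem at all: it states that the proof is ``essentially the same'' as that of the left chain rule, i.e.\ one mirrors the Appendix~C argument line by line, using Definition~\ref{def:rightghr} in place of Definition~\ref{def:leftghr} and a right-sided analogue of Lemma~\ref{lem:qvfchainrl} --- the ``purely mechanical alternative'' you mention at the end. Your argument instead obtains the theorem as a corollary of the already-proved left chain rule via the duality \eqref{eq:gleftrightlink}: writing $(f\circ g)^*=f^*\circ g$, expanding $\partial (f^*\circ g)/\partial q^{\mu*}$ by the conjugate pair of Theorem~\ref{thm:nwpleftchainrl}, and conjugating term by term, with $(AB)^*=B^*A^*$ producing the reversed ($g$ before $f$) ordering and the conversions $\left(\partial g^{\nu\eta*}/\partial q^{\mu*}\right)^*=\partial_r g^{\nu\eta}/\partial q^{\mu}$ and $\left(\partial f^*/\partial g^{\nu\eta*}\right)^*=\partial_r f/\partial g^{\nu\eta}$ restoring right GHR derivatives. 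The ingredients you invoke are all legitimate: \eqref{eq:gleftrightlink} holds for an arbitrary nonzero rotation index, so applying it with index $\nu\eta$ and with $g$ as the variable is fine, and the existence hypotheses transfer because left and right GHR derivatives of $f$, $f^*$, $g$ and $g^{\nu\eta}$ are all linear combinations of the same four real partials. You also correctly pair each of the four target identities with the opposite-starred pair of the left theorem, since the single conjugation flips the stars. What your route buys is economy and structural insight --- it makes explicit that the right calculus is the conjugate mirror of the left one, so no new computation is needed; what the paper's mirror-proof buys is self-containedness, making no appeal to the left/right conversion identities.
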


\begin{corollary}[(Chain Rule of Right HR)]\label{cor:rightchainrl}
Let $S\subseteq \mathbb{H}$ and suppose $g:S\rightarrow \mathbb{H}$ has the right HR derivative at an interior point $q$ of the set $S$. Let $T\subseteq \mathbb{H}$ be such that $g(q)\in T$ for all $q \in S$. Assume $f:T\rightarrow \mathbb{H}$ has right HR derivatives at an inner point $g(q)\in T$, then the right HR derivatives of the composite function $f(g(q))$ are as follows:
\begin{equation}
\begin{split}
&\frac{\partial_r f(g(q))}{\partial q^{\mu}}=\frac{\partial_r g}{\partial q^{\mu}}\frac{\partial_r f}{\partial g}+\frac{\partial_r g^{i}}{\partial q^{\mu}}\frac{\partial_r f}{\partial g^{i}}+\frac{\partial_r g^{j}}{\partial q^{\mu}}\frac{\partial_r f}{\partial g^{j}}+\frac{\partial_r g^{k}}{\partial q^{\mu}}\frac{\partial_r f}{\partial g^{k}}\\
&\frac{\partial_r f(g(q))}{\partial q^{\mu*}}=\frac{\partial_r g}{\partial q^{\mu*}}\frac{\partial_r f}{\partial g}+\frac{\partial_r g^{i}}{\partial q^{\mu*}}\frac{\partial_r f}{\partial g^{i}}+\frac{\partial_r g^{j}}{\partial q^{\mu*}}\frac{\partial_r f}{\partial g^{j}}+\frac{\partial_r g^{k}}{\partial q^{\mu*}}\frac{\partial_r f}{\partial g^{k}}
\end{split}
\end{equation}
\begin{equation}
\begin{split}
&\frac{\partial_r f(g(q))}{\partial q^{\mu}}=\frac{\partial_r g}{\partial q^{\mu}}\frac{\partial_r f}{\partial g}+\frac{\partial_r g^{i*}}{\partial q^{\mu}}\frac{\partial_r f}{\partial g^{i*}}+\frac{\partial_r g^{j*}}{\partial q^{\mu}}\frac{\partial_r f}{\partial g^{j*}}+\frac{\partial_r g^{k*}}{\partial q^{\mu}}\frac{\partial_r f}{\partial g^{k*}}\\
&\frac{\partial_r f(g(q))}{\partial q^{\mu*}}=\frac{\partial_r g}{\partial q^{\mu*}}\frac{\partial_r f}{\partial g}+\frac{\partial_r g^{i*}}{\partial q^{\mu*}}\frac{\partial_r f}{\partial g^{i*}}+\frac{\partial_r g^{j*}}{\partial q^{\mu*}}\frac{\partial_r f}{\partial g^{j*}}+\frac{\partial_r g^{k*}}{\partial q^{\mu*}}\frac{\partial_r f}{\partial g^{k*}}
\end{split}
\end{equation}
where $\mu \in \{1,i,j,k\}$.
\end{corollary}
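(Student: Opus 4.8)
The plan is to obtain Corollary \ref{cor:rightchainrl} as the specialization $\nu = 1$ of the Right GHR chain rule, Theorem \ref{thm:nwprightchainrl}, exactly mirroring how Corollary \ref{cor:leftchainrl} was deduced from Theorem \ref{thm:nwpleftchainrl}. First I would check that the hypotheses transfer cleanly: for $\mu \in \{1,i,j,k\}$ and $\nu = 1$ we have $\mu\nu = \mu \neq 0$, so the general statement of Theorem \ref{thm:nwprightchainrl} applies verbatim, and the differentiability assumptions on $g$ and $f$ at the relevant interior points are precisely those already imposed.

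The key reduction is to evaluate the rotated involutions at $\nu = 1$. Using $q^{1} = q$ together with the composition property \eqref{pr:def1qmunu}, the basis of rotated involutions collapses onto the standard HR involution basis:
\begin{equation*}
g^{\nu} = g, \quad g^{\nu i} = g^{i}, \quad g^{\nu j} = g^{j}, \quad g^{\nu k} = g^{k},
\end{equation*}
and likewise for the conjugate family, $g^{\nu*} = g^{*}$, $g^{\nu i*} = g^{i*}$, and so on. Substituting these identities into the four displayed equations of Theorem \ref{thm:nwprightchainrl} immediately produces the four equations asserted in the corollary, one block for the involution family $\{g,g^{i},g^{j},g^{k}\}$ and one for the conjugate family $\{g^{*},g^{i*},g^{j*},g^{k*}\}$.

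Finally, I would invoke property \eqref{pr:grightghrpr2} to translate the right GHR derivatives into right HR derivatives: for $\mu = 1,i,j,k$ the operators $\partial_r/\partial q^{\mu}$ and $\partial_r/\partial q^{\mu*}$ are by definition the right HR derivatives $\partial_r/\partial q,\partial_r/\partial q^{i},\partial_r/\partial q^{j},\partial_r/\partial q^{k}$ and their conjugate counterparts from Definition \ref{def:righthr}. This identifies every factor of the form $\partial_r g^{\nu\eta}/\partial q^{\mu}$ and $\partial_r f/\partial g^{\nu\eta}$ appearing in the specialized chain rule with a genuine HR derivative, completing the argument.

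I do not anticipate any real analytic obstacle here, since the substantive work has already been carried out in Theorem \ref{thm:nwprightchainrl}; the corollary is a pure specialization. The only point demanding care is the notational bookkeeping---confirming that setting $\nu = 1$ correctly sends the rotated-involution basis $\{g^{\nu},g^{\nu i},g^{\nu j},g^{\nu k}\}$ to the HR basis $\{g,g^{i},g^{j},g^{k}\}$ (and similarly in the conjugate block), and that the GHR-to-HR passage via \eqref{pr:grightghrpr2} is applied consistently to both factors in each product term.
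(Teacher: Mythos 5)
Your proposal is correct and coincides with the paper's own route: the paper proves this corollary by declaring it ``essentially the same'' as Corollary \ref{cor:leftchainrl}, whose entire proof is to set $\nu=1$ in the corresponding GHR chain rule theorem, which is exactly your specialization of Theorem \ref{thm:nwprightchainrl} combined with the identification \eqref{pr:grightghrpr2} of GHR with HR derivatives for $\mu\in\{1,i,j,k\}$. The only caveat, which concerns the paper rather than your argument, is that the substitution $\nu=1$ actually yields $\frac{\partial_r g^{*}}{\partial q^{\mu}}\frac{\partial_r f}{\partial g^{*}}$ as the leading term of the conjugate block, so the printed first terms $\frac{\partial_r g}{\partial q^{\mu}}\frac{\partial_r f}{\partial g}$ in the second displayed equation of the corollary appear to be a typographical slip.
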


\begin{corollary}\label{cor:rightcompchainrl}
Let $S\subseteq \mathbb{H}$ and suppose $g:S\rightarrow \mathbb{C}$ has the right GHR derivative at an interior point $q$ of the set $S$. Let $T\subseteq \mathbb{C}$ be such that $g(q)\in T$ for all $q \in S$. Assume $f:T\rightarrow \mathbb{C}$ has the CR derivatives at an inner point $g(q)\in T$, then the right GHR derivatives of the composite function $f(g(q))$ are as follows:
\begin{equation}
\begin{split}
&\frac{\partial_r f(g(q))}{\partial q^{\mu}}=\frac{\partial_r g}{\partial q^{\mu}}\frac{\partial f}{\partial g}+\frac{\partial_r g^*}{\partial q^{\mu}}\frac{\partial f}{\partial g^*},\quad
\frac{\partial_r f(g(q))}{\partial q^{\mu*}}=\frac{\partial_r g}{\partial q^{\mu*}}\frac{\partial f}{\partial g}+\frac{\partial_r g^*}{\partial q^{\mu*}}\frac{\partial f}{\partial g^*}
\end{split}
\end{equation}
where $\mu\in\mathbb{H},\mu \neq 0$, $\frac{\partial f}{\partial g}$ and $\frac{\partial f}{\partial g^*}$ are the CR derivatives in CR calculus.
\end{corollary}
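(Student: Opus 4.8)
The plan is to derive this corollary by specializing the right GHR chain rule of Theorem~\ref{thm:nwprightchainrl} to $\nu=1$ (which remains valid for all $\mu\in\mathbb{H}$, $\mu\neq0$) and then exploiting the complex structure of both $f$ and $g$. Setting $\nu=1$ in Theorem~\ref{thm:nwprightchainrl} gives $\frac{\partial_r f(g(q))}{\partial q^{\mu}}=\frac{\partial_r g}{\partial q^{\mu}}\frac{\partial_r f}{\partial g}+\frac{\partial_r g^{i}}{\partial q^{\mu}}\frac{\partial_r f}{\partial g^{i}}+\frac{\partial_r g^{j}}{\partial q^{\mu}}\frac{\partial_r f}{\partial g^{j}}+\frac{\partial_r g^{k}}{\partial q^{\mu}}\frac{\partial_r f}{\partial g^{k}}$, together with the analogous expression for $\frac{\partial_r f(g(q))}{\partial q^{\mu*}}$. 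The goal is to show that, under the complex hypotheses, each of these four-term sums collapses to the stated two-term form.

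First I would use that $g$ takes values in $\mathbb{C}=\mathrm{span}\{1,i\}\subset\mathbb{H}$, so its vector part has no $j$- or $k$-component; writing $g=g_a+ig_b$ and applying the involutions of \eqref{eq:qijkrot} gives $g^i=g$ and $g^j=g^k=g^*$. Hence the coefficients $\frac{\partial_r g^{i}}{\partial q^{\mu}}$, $\frac{\partial_r g^{j}}{\partial q^{\mu}}$, $\frac{\partial_r g^{k}}{\partial q^{\mu}}$ become $\frac{\partial_r g}{\partial q^{\mu}}$, $\frac{\partial_r g^*}{\partial q^{\mu}}$, $\frac{\partial_r g^*}{\partial q^{\mu}}$ respectively. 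Next I would relate the four right HR derivatives of $f$ to the two CR (Wirtinger) derivatives: since $f:T\to\mathbb{C}$ with $T\subseteq\mathbb{C}$ depends only on $g_a$ and $g_b$, we have $\frac{\partial f}{\partial g_c}=\frac{\partial f}{\partial g_d}=0$, so reading Definition~\ref{def:righthr} with $g$ in place of $q$ yields $\frac{\partial_r f}{\partial g}=\frac{\partial_r f}{\partial g^{i}}=\frac14\bigl(\frac{\partial f}{\partial g_a}-i\frac{\partial f}{\partial g_b}\bigr)=\tfrac12\frac{\partial f}{\partial g}$ and $\frac{\partial_r f}{\partial g^{j}}=\frac{\partial_r f}{\partial g^{k}}=\frac14\bigl(\frac{\partial f}{\partial g_a}+i\frac{\partial f}{\partial g_b}\bigr)=\tfrac12\frac{\partial f}{\partial g^*}$, where $\frac{\partial f}{\partial g}$ and $\frac{\partial f}{\partial g^*}$ denote the CR derivatives.

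Substituting both sets of identities and grouping the paired terms then completes the proof: the first two summands combine as $\frac{\partial_r g}{\partial q^{\mu}}\bigl(\tfrac12\frac{\partial f}{\partial g}+\tfrac12\frac{\partial f}{\partial g}\bigr)=\frac{\partial_r g}{\partial q^{\mu}}\frac{\partial f}{\partial g}$, and the last two combine as $\frac{\partial_r g^*}{\partial q^{\mu}}\frac{\partial f}{\partial g^*}$, with the identical computation producing the $q^{\mu*}$ formula. I expect the main delicacy to be the order of multiplication: the factor $\frac{\partial_r g}{\partial q^{\mu}}$ is quaternion-valued while $\frac{\partial f}{\partial g}$ is merely complex, so these do not commute in general. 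The right-derivative convention of Theorem~\ref{thm:nwprightchainrl}, however, keeps the $g$-derivative on the left of the $f$-derivative throughout, and the factors $\tfrac12$ are real scalars, so the substitution and grouping preserve the correct order. If one prefers to avoid invoking Theorem~\ref{thm:nwprightchainrl}, the same two-term formula can be obtained directly by repeating that theorem's chain-rule argument with the complex structure of $f$ and $g$ imposed from the outset.
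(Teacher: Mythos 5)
Your proposal is correct, but it takes a genuinely different route from the paper. The paper's own treatment of this corollary is to omit the proof, remarking that it is essentially the same as that of the left-hand analogue, Corollary \ref{cor:leftcompchainrl}, whose proof is in turn declared to be the same as that of the full chain rule, Theorem \ref{thm:nwpleftchainrl}; that is, the intended argument re-runs the Appendix C computation from the real partials $\partial f/\partial g_a$, $\partial f/\partial g_b$ with the complex structure of $f$ and $g$ imposed from the outset. You instead use the already-established Theorem \ref{thm:nwprightchainrl} as a black box with $\nu=1$ and collapse the four-term sum, via two observations: (i) $g^i=g$ and $g^j=g^k=g^*$ for $\mathbb{C}$-valued $g$, which follows from \eqref{eq:qijkrot} since $g_c=g_d=0$; and (ii) restricted to $\mathbb{C}$-valued arguments the right HR derivatives of $f$ reduce to half the Wirtinger derivatives, $\frac{\partial_r f}{\partial g}=\frac{\partial_r f}{\partial g^{i}}=\frac12\frac{\partial f}{\partial g}$ and $\frac{\partial_r f}{\partial g^{j}}=\frac{\partial_r f}{\partial g^{k}}=\frac12\frac{\partial f}{\partial g^*}$, by Definition \ref{def:righthr} with $\partial f/\partial g_c=\partial f/\partial g_d=0$ (here it matters that $\partial f/\partial g_b$ is complex and so commutes with $i$). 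The pairing of terms then correctly absorbs the factors $\frac12$, and the right-convention ordering ($g$-derivative on the left) is preserved since $\frac12$ is a real scalar, as you note. Your route buys economy and makes visible exactly why the quaternion chain rule degenerates to the complex CR chain rule; the paper's route sidesteps one point your version leaves slightly implicit, namely that applying Theorem \ref{thm:nwprightchainrl} to an $f$ defined only on $T\subseteq\mathbb{C}$ requires interpreting its quaternion HR derivatives at all, i.e., tacitly extending $f$ off $\mathbb{C}$ constantly in the $j,k$ directions (which is precisely your statement $\partial f/\partial g_c=\partial f/\partial g_d=0$). Since $g$ takes values in $\mathbb{C}$, this extension does not alter the composite $f(g(q))$, so the step is harmless, but it is worth stating as a convention rather than an inference.
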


\begin{corollary}\label{cor:rightrealchainrl}
Let $S\subseteq \mathbb{H}$ and suppose $g:S\rightarrow \mathbb{R}$ has the right GHR derivative at an interior point $q$ of the set $S$. Let $T\subseteq \mathbb{R}$ be such that $g(q)\in T$ for all $q \in S$. Assume $f:T\rightarrow \mathbb{R}$ has real derivatives at an inner point $f(q)\in T$, then the right GHR derivatives of the composite function $f(g(q))$ are as follows:
\begin{equation}
\begin{split}
&\frac{\partial f(g(q))}{\partial q^{\mu}}=f'(g)\frac{\partial_r g}{\partial q^{\mu}},\quad
\frac{\partial f(g(q))}{\partial q^{\mu*}}=f'(g)\frac{\partial_r g}{\partial q^{\mu*}}
\end{split}
\end{equation}
where $\mu\in\mathbb{H},\mu \neq 0$ and $f'(g)$ is the real derivatives of real-valued function.
\end{corollary}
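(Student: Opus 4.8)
The plan is to obtain this corollary by specializing the general right-hand chain rule of Theorem~\ref{thm:nwprightchainrl} to the situation in which both the inner map $g$ and the outer map $f$ are real-valued. Since the auxiliary parameter $\nu$ is free (subject only to $\mu\nu\neq 0$), I would first set $\nu=1$, so that the four intermediate variables in the sum become $g,g^{i},g^{j},g^{k}$ and the right chain rule takes the form
\begin{equation*}
\frac{\partial_r f(g(q))}{\partial q^{\mu}}=\frac{\partial_r g}{\partial q^{\mu}}\frac{\partial_r f}{\partial g}+\frac{\partial_r g^{i}}{\partial q^{\mu}}\frac{\partial_r f}{\partial g^{i}}+\frac{\partial_r g^{j}}{\partial q^{\mu}}\frac{\partial_r f}{\partial g^{j}}+\frac{\partial_r g^{k}}{\partial q^{\mu}}\frac{\partial_r f}{\partial g^{k}},
\end{equation*}
together with its conjugate counterpart for $q^{\mu*}$.

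Next I would exploit the two real-valuedness hypotheses in turn. Because $g$ is real and a real quaternion is fixed by every involution ($g^{\eta}=\eta g\eta^{-1}=g\eta\eta^{-1}=g$), we have $g^{i}=g^{j}=g^{k}=g$, and hence $\partial_r g^{i}/\partial q^{\mu}=\partial_r g^{j}/\partial q^{\mu}=\partial_r g^{k}/\partial q^{\mu}=\partial_r g/\partial q^{\mu}$; this lets the common factor $\partial_r g/\partial q^{\mu}$ be pulled out on the left. It then remains to show that the bracketed sum $\partial_r f/\partial g+\partial_r f/\partial g^{i}+\partial_r f/\partial g^{j}+\partial_r f/\partial g^{k}$ equals $f'(g)$. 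Reading Definition~\ref{def:righthr}, summing the four rows of the defining matrix annihilates the $i$-, $j$- and $k$-columns and leaves $\partial f/\partial g_a$; since $T\subseteq\mathbb{R}$ forces the argument $g$ to be real and $f$ is a real function of that single real variable, $\partial f/\partial g_a=f'(g)$ while the other three partials vanish.

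Finally, since $f'(g)$ is a real scalar it commutes with $\partial_r g/\partial q^{\mu}$, so the product may be reordered into the stated form $f'(g)\,\partial_r g/\partial q^{\mu}$, and the identical argument applied to the conjugate chain rule supplies the $q^{\mu*}$ identity. I would also note that because $f\circ g$ and $g$ are both real-valued, their left and right GHR derivatives coincide (the equivalence observed after~\eqref{eq:leftrightlink}), which justifies writing the plain symbol $\partial/\partial q^{\mu}$ on the left-hand side of the statement. The step I expect to require the most care is the collapse of the four outer derivatives to $f'(g)$: it rests simultaneously on $g$ being real (so that its involutions coincide and factor out) and on $f$ being a real function of a real argument (so that only the $g_a$-partial survives), and I would write that reduction out explicitly while treating everything else as routine.
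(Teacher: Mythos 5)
Your proof is correct, but it takes a genuinely different route from the paper's. The paper does not obtain this corollary by specializing Theorem \ref{thm:nwprightchainrl}; it states that the proof is ``essentially the same'' as that of the left-hand counterparts, i.e.\ one repeats the Appendix C computation (the right-hand analogue of Lemma \ref{lem:qvfchainrl}) in the real-valued setting: write $\frac{\partial (f\circ g)}{\partial \xi}=f'(g)\,\frac{\partial g}{\partial \xi}$ for $\xi\in\{q_a,q_b,q_c,q_d\}$ by the ordinary real chain rule, insert these into Definition \ref{def:rightghr}, and pull the real scalar $f'(g)$ out past $i^{\mu},j^{\mu},k^{\mu}$. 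You instead use the general theorem as a black box with $\nu=1$ (correctly returning to the theorem rather than Corollary \ref{cor:rightchainrl}, since the statement demands arbitrary $\mu\in\mathbb{H}$, $\mu\neq 0$), then exploit reality twice: $g^{\eta}=g$ lets the common factor $\partial_r g/\partial q^{\mu}$ come out on the left, and the row sum of the matrix in Definition \ref{def:righthr} collapses the four outer derivatives to $\partial f/\partial g_a=f'(g)$; your closing appeals to the commutativity of the real scalar $f'(g)$ and to \eqref{eq:gleftrightlink} for dropping the subscript $r$ on the left-hand side are exactly the right justifications. The one technicality your route incurs that the paper's avoids: invoking Theorem \ref{thm:nwprightchainrl} requires $f$ to possess right GHR derivatives at $g(q)$, i.e.\ to be defined on a four-dimensional neighbourhood, whereas here $f$ lives only on $T\subseteq\mathbb{R}$, so saying the partials $\partial f/\partial g_b,\partial f/\partial g_c,\partial f/\partial g_d$ ``vanish'' tacitly presupposes the extension $\tilde f(p)=f(\mathfrak{R}(p))$, for which $\tilde f\circ g=f\circ g$ precisely because $g$ is real-valued. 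That is a one-line fix, but it should be written down; the paper's direct computation needs only the ordinary real chain rule and never differentiates $f$ in non-real directions. What your approach buys in exchange is economy and transparency: it isolates exactly which structural facts make the formula collapse, instead of rerunning the full derivation.
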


The proofs of Theorem \ref{thm:nwprightchainrl}, Corollaries \ref{cor:rightchainrl}, \ref{cor:rightcompchainrl} and \ref{cor:rightrealchainrl}  are essentially the same as those of Theorem \ref{thm:nwpleftchainrl}, Corollaries \ref{cor:leftchainrl}, \ref{cor:leftcompchainrl} and \ref{cor:leftrealchainrl}, thus omitted.

\subsection{Mean Value Theorem.}
The mean value theorem is one of the most important theoretical tools in calculus. In this section, we propose a version of mean valued theorem for quaternion-valued functions of quaternion variables.
\begin{theorem}[(Mean Value Theorem of Left Form)]\label{thm:qmvt}
 Let $f:S\subseteq \mathbb{H} \rightarrow\mathbb{H}$ be continuous and its left HR derivatives exist and are continuous in the set $S$. If $q_0,q_1\in S$ such that the segment joining them is also in $S$ then, then
\begin{equation}\label{eq:mvtqvf}
\begin{split}
f(q_1)-f(q_0)&=\int_0^1f(q_0+t\lambda)\left(\frac{\partial }{\partial q}\lambda+\frac{\partial }{\partial q^i}\lambda^i+\frac{\partial }{\partial q^j}\lambda^j+\frac{\partial }{\partial q^k}\lambda^k\right)dt\\
&=\int_0^1f(q_0+t\lambda)\left(\frac{\partial }{\partial q^*}\lambda^*+\frac{\partial }{\partial q^{i*}}\lambda^{i*}+\frac{\partial }{\partial q^{j*}}\lambda^{j*}+\frac{\partial }{\partial q^{k*}}\lambda^{k*}\right)dt
\end{split}
\end{equation}
where $\lambda=q_1-q_0$, $\frac{\partial }{\partial q},\frac{\partial }{\partial q^i},\frac{\partial }{\partial q^j},\frac{\partial }{\partial q^k}$ and so on are the left HR derivatives in Definition \ref{def:lefthr}.
\end{theorem}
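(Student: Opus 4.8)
The plan is to reduce the quaternion mean value theorem to the ordinary fundamental theorem of calculus applied along the line segment joining $q_0$ and $q_1$. First I would parametrize the segment by $q(t)=q_0+t\lambda$ with $\lambda=q_1-q_0$ and $t\in[0,1]$, which lies in $S$ by hypothesis, and set $g(t)=f(q_0+t\lambda)$. Since $f$ is quaternion-valued, $g$ splits into its four real component functions of the single real variable $t$; each is continuously differentiable because continuity of the left HR derivatives is equivalent (through the invertible transformation of Definition \ref{def:lefthr}) to continuity of the real partials $\partial f/\partial q_a,\dots,\partial f/\partial q_d$ on $S$. Applying the scalar fundamental theorem of calculus componentwise and recombining then gives $f(q_1)-f(q_0)=g(1)-g(0)=\int_0^1 g'(t)\,dt$, so everything reduces to identifying $g'(t)$ with the integrand in \eqref{eq:mvtqvf}.

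Next I would compute $g'(t)$. Writing $\lambda=\lambda_a+i\lambda_b+j\lambda_c+k\lambda_d$, the real components of $q(t)$ are $q_s=q_{0s}+t\lambda_s$ for $s\in\{a,b,c,d\}$, so the ordinary chain rule yields $g'(t)=\sum_s (\partial f/\partial q_s)\lambda_s$ (the real scalars $\lambda_s$ commute with the quaternion-valued partials, so ordering is irrelevant here). It then remains to show
\begin{equation*}
\frac{\partial f}{\partial q_a}\lambda_a+\frac{\partial f}{\partial q_b}\lambda_b+\frac{\partial f}{\partial q_c}\lambda_c+\frac{\partial f}{\partial q_d}\lambda_d=\frac{\partial f}{\partial q}\lambda+\frac{\partial f}{\partial q^i}\lambda^i+\frac{\partial f}{\partial q^j}\lambda^j+\frac{\partial f}{\partial q^k}\lambda^k.
\end{equation*}
The cleanest way to see this is through the differential identity underlying Definition \ref{def:lefthr}: by construction the left HR derivatives are exactly the coefficients for which $(\partial f/\partial q)\,dq+(\partial f/\partial q^i)\,dq^i+(\partial f/\partial q^j)\,dq^j+(\partial f/\partial q^k)\,dq^k$ reproduces $df=\sum_s(\partial f/\partial q_s)\,dq_s$, the linear independence of the differentials being precisely Lemma \ref{lm:dpindp}. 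Specializing to the segment, $dq_s=\lambda_s\,dt$ forces $dq=\lambda\,dt$ and $dq^\eta=\lambda^\eta\,dt$ by the linearity of the involutions in \eqref{eq:qijkrot}, and the required identity follows at once.

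If I instead want a self-contained verification rather than invoking the differential form, I would substitute the four explicit expressions $\partial f/\partial q=\tfrac14(\partial f/\partial q_a-(\partial f/\partial q_b)i-(\partial f/\partial q_c)j-(\partial f/\partial q_d)k)$ and its three involutive partners from Definition \ref{def:lefthr}, together with $\lambda^i,\lambda^j,\lambda^k$ from \eqref{eq:qijkrot}, and expand the fourfold sum. Grouping by $\partial f/\partial q_s$, the cross terms pairing distinct imaginary units cancel, while the diagonal contributions $i\cdot i=j\cdot j=k\cdot k=-1$ combine to leave exactly $\sum_s(\partial f/\partial q_s)\lambda_s$. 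This bookkeeping, which must respect non-commutativity ($ij=-ji$, etc.), is the only genuinely computational step and is therefore the main obstacle, though it is routine. Finally, the second equality in \eqref{eq:mvtqvf} follows in the same way using the conjugate row of Definition \ref{def:lefthr}: the same specialization gives $dq^{\eta*}=\lambda^{\eta*}\,dt$, so the conjugate HR derivatives paired with the $\lambda^{\eta*}$ reproduce the identical directional derivative $g'(t)$, and integrating over $[0,1]$ completes the proof.
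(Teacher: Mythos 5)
Your proposal is correct, and its skeleton is the same as the paper's: parametrize the segment by $g(t)=f(q_0+t\lambda)$, identify $g'(t)$ with the HR-derivative expression, and integrate via the fundamental theorem of calculus. The one genuine difference lies in how $g'(t)$ is identified. The paper simply invokes its chain rule, Corollary \ref{cor:leftchainrl}, to write $g'(t)=\frac{\partial f}{\partial q}\lambda+\frac{\partial f}{\partial q^i}\lambda^i+\frac{\partial f}{\partial q^j}\lambda^j+\frac{\partial f}{\partial q^k}\lambda^k$, whereas you derive the same expression from first principles: the ordinary real chain rule gives $g'(t)=\sum_s(\partial f/\partial q_s)\lambda_s$, and you then match this with the HR-derivative combination either through the defining differential identity \eqref{eq:case1dfdqcmp} (with $dq_s=\lambda_s\,dt$, $dq^\eta=\lambda^\eta\,dt$, and uniqueness from Lemma \ref{lm:dpindp}) or by direct expansion of Definition \ref{def:lefthr}. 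Your route is slightly longer but more self-contained, and it quietly repairs a small looseness in the paper's argument: Corollary \ref{cor:leftchainrl} is stated for inner functions $g:S\subseteq\mathbb{H}\rightarrow\mathbb{H}$ of a quaternion variable, so applying it to the real-parameter curve $t\mapsto q_0+t\lambda$ requires an implicit extension (essentially Lemma \ref{lem:qvfchainrl} with $\xi$ replaced by $t$), which your direct verification avoids entirely. The paper's version, in exchange, is shorter and reuses machinery already established.
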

\begin{proof}
Put $g(t)=f(q_0+t\lambda)$, $0\leq t\leq 1$. Then $g(t)$ is continuous on $[0,1]$ and has derivatives in $(0,1)$.
By using Corollary \ref{cor:leftchainrl}, the derivative of $g(t)$ can be found as
\begin{equation}\label{eq:dgdtmvt}
g'(t)=\frac{\partial f(q_0+t\lambda)}{\partial q}\lambda+\frac{\partial f(q_0+t\lambda)}{\partial q^i}\lambda^i+\frac{\partial f(q_0+t\lambda)}{\partial q^j}\lambda^j+\frac{\partial f(q_0+t\lambda)}{\partial q^k}\lambda^k
\end{equation}
By substituting the above expression into $g(1)-g(0)=\int_0^1g'(t)dt$ with $g(0)=f(q_0)$ and $g(1)=f(q_1)$, then the theorem follows. The second equality can be proved in similar manner.
\end{proof}

\begin{corollary}\label{cor:realmvncor}
Let $f:S\subseteq \mathbb{H} \rightarrow\mathbb{R}$ be continuous and its left HR derivatives exist and are continuous in the set $S$. If $q_0,q_1\in S$ such that the segment joining them is also in $S$, then
\begin{equation}\label{eq:mvtrealf}
\begin{split}
f(q_1)-f(q_0)&=4\int_0^1\mathfrak{R}\left(\frac{\partial f(q_0+t\lambda)}{\partial q}\lambda\right)dt=4\int_0^1\mathfrak{R}\left(\frac{\partial f(q_0+t\lambda)}{\partial q^*}\lambda^*\right)dt
\end{split}
\end{equation}
where $\lambda=q_1-q_0$, $\frac{\partial f}{\partial q}$ and $\frac{\partial f}{\partial q^*}$ are the left HR derivatives in Definition \ref{def:lefthr}.
\end{corollary}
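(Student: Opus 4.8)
The plan is to derive this corollary directly from the left-form mean value theorem (Theorem \ref{thm:qmvt}) by simplifying its integrand under the extra assumption that $f$ is real-valued. Writing $\eta$ for a generic element of $\{1,i,j,k\}$ with the convention $q^{1}=q$ and $\lambda^{1}=\lambda$, the first line of \eqref{eq:mvtqvf} is the integral of $\sum_{\eta}\frac{\partial f(q_0+t\lambda)}{\partial q^{\eta}}\lambda^{\eta}$, so it suffices to prove the pointwise identity $\sum_{\eta}\frac{\partial f}{\partial q^{\eta}}\lambda^{\eta}=4\mathfrak{R}\!\left(\frac{\partial f}{\partial q}\lambda\right)$ for every $\lambda\in\mathbb{H}$.

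First I would exploit that $f$ is real-valued: every quaternion involution fixes real numbers, so $f^{\eta}=f$, and the left-constant rule \eqref{pr:lefthrpr2} collapses to $\frac{\partial f}{\partial q^{\eta}}=\left(\frac{\partial f}{\partial q}\right)^{\eta}$. This expresses all four HR derivatives as involutions of the single derivative $\frac{\partial f}{\partial q}$. Next, since involutions are multiplicative (the involution case of \eqref{pr:pqmu}), each summand factors as $\left(\frac{\partial f}{\partial q}\right)^{\eta}\lambda^{\eta}=\left(\frac{\partial f}{\partial q}\lambda\right)^{\eta}$. Putting $p=\frac{\partial f}{\partial q}\lambda$, the integrand becomes $p+p^{i}+p^{j}+p^{k}$, and the component identity read off from \eqref{eq:z1qlink}, namely $p+p^{i}+p^{j}+p^{k}=4\mathfrak{R}(p)$, finishes the claim. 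Reinstating the evaluation point $q_0+t\lambda$ and integrating over $[0,1]$ yields the first equality in \eqref{eq:mvtrealf}.

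For the second equality I would show the two integrands agree termwise rather than repeat the whole computation. For real-valued $f$ the conjugate rule gives $\frac{\partial f}{\partial q^{*}}=\left(\frac{\partial f}{\partial q}\right)^{*}$, whence $\frac{\partial f}{\partial q^{*}}\lambda^{*}=\left(\frac{\partial f}{\partial q}\right)^{*}\lambda^{*}=\left(\lambda\frac{\partial f}{\partial q}\right)^{*}$ by $(pq)^{*}=q^{*}p^{*}$. Taking real parts and using $\mathfrak{R}(p^{*})=\mathfrak{R}(p)$ together with the symmetry $\mathfrak{R}(pq)=\mathfrak{R}(qp)$ (immediate from the product formula $pq=SpSq-Vp\cdot Vq+\cdots$) gives $\mathfrak{R}\!\left(\frac{\partial f}{\partial q^{*}}\lambda^{*}\right)=\mathfrak{R}\!\left(\frac{\partial f}{\partial q}\lambda\right)$, so the second integral coincides with the first. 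The only delicate step is the very first one --- justifying $f^{\eta}=f$ so that \eqref{pr:lefthrpr2} legitimately reduces the four distinct derivatives to involutions of a single one; after that the argument is a purely algebraic telescoping to $4\mathfrak{R}(\cdot)$, with no analytic subtleties beyond the continuity already assumed for the integral to make sense.
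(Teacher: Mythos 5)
Your proposal is correct and follows essentially the same route as the paper: reduce via $f^{\eta}=f$ and \eqref{pr:lefthrpr2} to $\frac{\partial f}{\partial q^{\eta}}\lambda^{\eta}=\bigl(\frac{\partial f}{\partial q}\lambda\bigr)^{\eta}$, sum the involutions to $4\mathfrak{R}(\cdot)$ via \eqref{eq:z1qlink}, and feed this into Theorem \ref{thm:qmvt}. Your handling of the second equality, using $(pq)^{*}=q^{*}p^{*}$ together with $\mathfrak{R}(p^{*})=\mathfrak{R}(p)$ and $\mathfrak{R}(pq)=\mathfrak{R}(qp)$, is just an unpacked form of the paper's identity $\mathfrak{R}(pq)=\mathfrak{R}(p^{*}q^{*})$, so the two arguments coincide.
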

\begin{proof}
Because $f$ is a real-valued function, then $\frac{\partial f}{\partial q}=\frac{\partial f^{\eta}}{\partial q}$, where $\eta\in\{1,i,j,k\}$. From \eqref{pr:pqmu} and \eqref{pr:lefthrpr2}, it follows that
\begin{equation}
\frac{\partial f}{\partial q^{\eta}}\lambda^{\eta}=\frac{\partial f^{\eta}}{\partial q^{\eta}}\lambda^{\eta}=\left(\frac{\partial f}{\partial q}\lambda\right)^{\eta}
\end{equation}
Hence, the corollary follows from \eqref{eq:z1qlink} and Theorem \ref{thm:qmvt}, the second equality can be derived by using $\mathfrak{R}(pq)=\mathfrak{R}(p^*q^*)$.
\end{proof}

If $\lambda$ is sufficiently small in the modulus, the right-hand side of \eqref{eq:mvtqvf} can be approximated as
\begin{equation}
f(q_1)-f(q_0)\approx f(q_0)\left(\frac{\partial }{\partial q}\lambda+\frac{\partial }{\partial q^i}\lambda^i+\frac{\partial }{\partial q^j}\lambda^j+\frac{\partial }{\partial q^k}\lambda^k\right)
\end{equation}
If the left HR derivatives of $f$ is Lipschitz continuous in the vicinity of $q$ and $q_1$ with the Lipschitz constant $L$, we can
estimate the error in this approximation as follows:
\begin{equation}
\begin{split}
&\left|f(q_1)-f(q_0)-f(q_0)\left(\frac{\partial }{\partial q}\lambda+\frac{\partial }{\partial q^i}\lambda^i+\frac{\partial }{\partial q^j}\lambda^j+\frac{\partial }{\partial q^k}\lambda^k\right)\right|\\
&=\left|\int_0^1[f(q_0+t\lambda)-f(q_0)]\left(\frac{\partial }{\partial q}\lambda+\frac{\partial }{\partial q^i}\lambda^i+\frac{\partial }{\partial q^j}\lambda^j+\frac{\partial }{\partial q^k}\lambda^k\right)dt\right|\\
&\leq4\left|\int_0^1Lt|\lambda|^2dt\right|=2L|\lambda|^2
\end{split}
\end{equation}

\begin{theorem}[(Mean Value Theorem of Right Form)]\label{thm:rtqmvt}
Let $f:S\subseteq \mathbb{H} \rightarrow\mathbb{H}$ be continuous and its right HR derivatives exist and are continuous in the set $S$. If $q_0,q_1\in S$ such that the segment joining them is also in $S$, then
\begin{equation}\label{eq:rtmvtqvf}
\begin{split}
f(q_1)-f(q_0)&=\int_0^1\left(\lambda\frac{\partial_r }{\partial q}+\lambda^i\frac{\partial_r }{\partial q^i}+\lambda^j\frac{\partial_r }{\partial q^j}+\lambda^k\frac{\partial_r }{\partial q^k}\right)f(q_0+t\lambda)dt\\
&=\int_0^1\left(\lambda^*\frac{\partial_r }{\partial q^*}+\lambda^{i*}\frac{\partial_r }{\partial q^{i*}}+\lambda^{j*}\frac{\partial_r }{\partial q^{j*}}+\lambda^{k*}\frac{\partial_r }{\partial q^{k*}}\right)f(q_0+t\lambda)dt
\end{split}
\end{equation}
where $\lambda=q_1-q_0$, $\frac{\partial_r }{\partial q},\frac{\partial_r }{\partial q^i},\frac{\partial_r }{\partial q^j}, \frac{\partial_r }{\partial q^k}$ and so on are the right HR derivatives in Definition \ref{def:righthr}.
\end{theorem}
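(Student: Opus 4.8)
The plan is to mirror the proof of Theorem~\ref{thm:qmvt} (the left form), replacing the left HR chain rule by its right-hand counterpart, Corollary~\ref{cor:rightchainrl}. First I would set $g(t)=f(q_0+t\lambda)$ for $0\le t\le1$, where $\lambda=q_1-q_0$. Since the segment $\{q_0+t\lambda:0\le t\le1\}$ lies in $S$ and the right HR derivatives of $f$ are continuous there, $g:[0,1]\to\mathbb{H}$ is continuous on $[0,1]$ and continuously differentiable on $(0,1)$.

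Next I would compute $g'(t)$. Because each involution $q\mapsto q^{\eta}$ is $\mathbb{R}$-linear, we have $(q_0+t\lambda)^{\eta}=q_0^{\eta}+t\lambda^{\eta}$ for $\eta\in\{i,j,k\}$, so the ordinary $t$-derivative of the inner map along each involution channel is the constant $\lambda^{\eta}$. Applying Corollary~\ref{cor:rightchainrl} (with the inner derivatives standing on the \emph{left} of the outer ones) then gives
\begin{equation*}
g'(t)=\lambda\frac{\partial_r f(q_0+t\lambda)}{\partial q}+\lambda^i\frac{\partial_r f(q_0+t\lambda)}{\partial q^i}+\lambda^j\frac{\partial_r f(q_0+t\lambda)}{\partial q^j}+\lambda^k\frac{\partial_r f(q_0+t\lambda)}{\partial q^k}.
\end{equation*}
Substituting this into the componentwise fundamental theorem of calculus $g(1)-g(0)=\int_0^1 g'(t)\,dt$, together with $g(0)=f(q_0)$ and $g(1)=f(q_1)$, yields the first equality of \eqref{eq:rtmvtqvf}.

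For the second equality I would repeat the argument verbatim with the conjugate involutions $\lambda^{*},\lambda^{i*},\lambda^{j*},\lambda^{k*}$ and the conjugate right HR derivatives $\partial_r/\partial q^{*},\dots,\partial_r/\partial q^{k*}$, invoking the conjugate-variable equation of Corollary~\ref{cor:rightchainrl}. Since both chain-rule expressions represent the same derivative $g'(t)$, the two integral representations coincide.

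The step I expect to be the main obstacle is checking that the right chain rule really places the scalar factors $\lambda,\lambda^i,\lambda^j,\lambda^k$ on the \emph{left} of the right HR derivatives, i.e.\ that the operator
\[
\lambda\frac{\partial_r}{\partial q}+\lambda^i\frac{\partial_r}{\partial q^i}+\lambda^j\frac{\partial_r}{\partial q^j}+\lambda^k\frac{\partial_r}{\partial q^k}
\]
collapses to the directional operator $\lambda_a\frac{\partial}{\partial q_a}+\lambda_b\frac{\partial}{\partial q_b}+\lambda_c\frac{\partial}{\partial q_c}+\lambda_d\frac{\partial}{\partial q_d}$. Since the quaternion product is noncommutative, the side on which each factor sits is essential here; the collapse follows from Definition~\ref{def:righthr} together with the component identities in \eqref{eq:z1qlink} (for instance $\lambda+\lambda^i+\lambda^j+\lambda^k=4\lambda_a$ and $\lambda+\lambda^i-\lambda^j-\lambda^k=4i\lambda_b$), provided one keeps every prefactor on the correct side throughout the manipulation.
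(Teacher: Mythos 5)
Your proposal is correct and is essentially the paper's own argument: the paper omits the proof of Theorem~\ref{thm:rtqmvt}, stating it is the same as that of Theorem~\ref{thm:qmvt}, and your construction (setting $g(t)=f(q_0+t\lambda)$, applying the right chain rule of Corollary~\ref{cor:rightchainrl} so that the increments $\lambda^{\eta}$ stand on the left of the right HR derivatives, and integrating $g'$ over $[0,1]$) is exactly that mirrored argument. Your attention to the side on which the factors $\lambda^{\eta}$ sit is precisely the point that distinguishes the right form from the left form, and you handle it correctly.
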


\begin{corollary}\label{cor:rtrealmvncor}
Let $f:S\subseteq \mathbb{H} \rightarrow\mathbb{R}$ be continuous and its right HR derivatives exist and are continuous in the set $S$. If $q_0,q_1\in S$ such that the segment joining them is also in $S$, then
\begin{equation}\label{eq:rtmvtrealf}
\begin{split}
f(q_1)-f(q_0)&=4\int_0^1\mathfrak{R}\left(\lambda\frac{\partial_r f(q_0+t\lambda)}{\partial q}\right)dt=4\int_0^1\mathfrak{R}\left(\lambda^*\frac{\partial_r f(q_0+t\lambda)}{\partial q^*}\right)dt
\end{split}
\end{equation}
where $\lambda=q_1-q_0$, $\frac{\partial f}{\partial q}$ and $\frac{\partial f}{\partial q^*}$ are the right HR derivatives in Definition \ref{def:righthr}.
\end{corollary}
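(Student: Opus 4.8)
The plan is to mirror the proof of Corollary~\ref{cor:realmvncor}, simply replacing the left mean value theorem by its right-form counterpart. First I would invoke Theorem~\ref{thm:rtqmvt}, which already supplies
\[
f(q_1)-f(q_0)=\int_0^1\left(\lambda\frac{\partial_r }{\partial q}+\lambda^i\frac{\partial_r }{\partial q^i}+\lambda^j\frac{\partial_r }{\partial q^j}+\lambda^k\frac{\partial_r }{\partial q^k}\right)f(q_0+t\lambda)\,dt,
\]
so the whole task reduces to collapsing this four-term integrand into a single real part.

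Next I would exploit that $f$ is real-valued. Since a real scalar commutes with every quaternion, $f^{\eta}=\eta f\eta^{-1}=f$ for each $\eta\in\{1,i,j,k\}$. Substituting this into the right-form property \eqref{pr:righthrpr2}, namely $\left(\tfrac{\partial_r f}{\partial q}\right)^{\eta}=\tfrac{\partial_r f^{\eta}}{\partial q^{\eta}}$, yields $\tfrac{\partial_r f}{\partial q^{\eta}}=\left(\tfrac{\partial_r f}{\partial q}\right)^{\eta}$. The crucial feature of the right form is that the differential $\lambda^{\eta}$ stands to the \emph{left} of the derivative, so I can apply the multiplicativity of rotation \eqref{pr:pqmu}, $(pq)^{\mu}=p^{\mu}q^{\mu}$, with $p=\lambda$ and $q=\tfrac{\partial_r f}{\partial q}$, to obtain
\[
\lambda^{\eta}\frac{\partial_r f}{\partial q^{\eta}}=\lambda^{\eta}\left(\frac{\partial_r f}{\partial q}\right)^{\eta}=\left(\lambda\frac{\partial_r f}{\partial q}\right)^{\eta}.
\]

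Summing over $\eta\in\{1,i,j,k\}$ and using the component identity underlying \eqref{eq:z1qlink}, namely $p+p^i+p^j+p^k=4\mathfrak{R}(p)$ for any $p\in\mathbb{H}$, collapses the integrand to $4\mathfrak{R}\!\left(\lambda\tfrac{\partial_r f}{\partial q}\right)$, establishing the first equality. For the second equality I would use $\mathfrak{R}(pq)=\mathfrak{R}(p^*q^*)$ together with the conjugate rule for real $f$: since the left and right HR derivatives coincide for real-valued functions, $\left(\tfrac{\partial_r f}{\partial q}\right)^{*}=\left(\tfrac{\partial f}{\partial q}\right)^{*}=\tfrac{\partial f}{\partial q^*}=\tfrac{\partial_r f}{\partial q^*}$, whence $\mathfrak{R}\!\left(\lambda\tfrac{\partial_r f}{\partial q}\right)=\mathfrak{R}\!\left(\lambda^*\tfrac{\partial_r f}{\partial q^*}\right)$.

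The only genuinely delicate step is keeping the non-commutative bookkeeping straight: one must confirm that the right-form derivative places $\lambda^{\eta}$ on the correct (left) side so that \eqref{pr:pqmu} applies as $(\lambda\,\partial_r f/\partial q)^{\eta}$ rather than in the reversed order, which would fail to simplify. Everything else is a routine transcription of the left-form argument, with the position of $\lambda^{\eta}$ and of the derivative interchanged relative to Corollary~\ref{cor:realmvncor}.
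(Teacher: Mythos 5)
Your proposal is correct and takes essentially the same approach as the paper: the paper omits this proof, stating it is the mirror image of Corollary \ref{cor:realmvncor}, and your argument is exactly that transcription — invoking Theorem \ref{thm:rtqmvt}, using $f^{\eta}=f$ for real-valued $f$ with \eqref{pr:righthrpr2} and \eqref{pr:pqmu} to collapse $\lambda^{\eta}\frac{\partial_r f}{\partial q^{\eta}}=\left(\lambda\frac{\partial_r f}{\partial q}\right)^{\eta}$, summing via \eqref{eq:z1qlink}, and obtaining the conjugate form from $\mathfrak{R}(pq)=\mathfrak{R}(p^*q^*)$ and the coincidence of left and right derivatives for real-valued functions. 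No gaps.
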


The proofs of Theorem \ref{thm:rtqmvt} and Corollary \ref{cor:rtrealmvncor}  are essentially the same as those of Theorem \ref{thm:qmvt} and Corollary \ref{cor:realmvncor}, thus omitted.

\subsection{Taylor's Theorem.}
In this section, we derive Taylor's theorem of quaternion-valued functions as a consequence of the univariate Taylor theorem.
\begin{lemma}[(Taylor's Theorem for Univariate Functions \cite{Apostol})]\label{lem:taylorthm1}
Let $f:D\subseteq\mathbb{R} \rightarrow\mathbb{R}$ be $(k+1)$-times continuously differentiable on an open interval D. If $x\in D$, then
\begin{equation}
f(x_0+h)=f(x_0)+f'(x_0)h+f''(x_0)\frac{h^2}{2}+\cdots+f^{(k)}(x_0)\frac{h^k}{k!}+R_k
\end{equation}
where the remainder $R_k$ is given by
\begin{equation}
R_k=\int_{x_0}^{x_0+h}\frac{f^{(k+1)}(t)}{k!}(x_0+h-t)^kdt
\end{equation}
\end{lemma}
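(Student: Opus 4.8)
The plan is to prove the formula by induction on $k$, using the fundamental theorem of calculus as the base case and a single integration by parts to pass from remainder order $k-1$ to order $k$. Since the hypothesis guarantees that $f$ is $(k+1)$-times continuously differentiable on the open interval $D$, every derivative and integral written below exists and is continuous, so the integration by parts is legitimate throughout.

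For the base case $k=0$, the claimed identity reduces to $f(x_0+h)=f(x_0)+\int_{x_0}^{x_0+h}f'(t)\,dt$, which is exactly the fundamental theorem of calculus applied to the continuously differentiable $f$. For the inductive step, I would assume that the expansion holds with remainder of order $k-1$, so that
\[
R_{k-1}=\int_{x_0}^{x_0+h}\frac{f^{(k)}(t)}{(k-1)!}(x_0+h-t)^{k-1}\,dt .
\]
The key computation is to integrate this by parts with the choice $u=f^{(k)}(t)$ and $dv=\frac{(x_0+h-t)^{k-1}}{(k-1)!}\,dt$, giving $du=f^{(k+1)}(t)\,dt$ and $v=-\frac{(x_0+h-t)^{k}}{k!}$. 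The boundary term vanishes at the upper limit $t=x_0+h$, since $(x_0+h-t)^k=0$ there, and contributes $f^{(k)}(x_0)\frac{h^k}{k!}$ at the lower limit $t=x_0$; the surviving integral is precisely $R_k$. Hence
\[
R_{k-1}=f^{(k)}(x_0)\frac{h^k}{k!}+R_k ,
\]
and substituting this into the order-$(k-1)$ expansion produces the order-$k$ expansion, which closes the induction.

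The only care required is bookkeeping: tracking the sign that arises because differentiating $(x_0+h-t)^k$ in $t$ introduces a factor $-1$ (which is why $v$ carries the minus sign), and confirming that the boundary evaluation lands exactly on the term $f^{(k)}(x_0)h^k/k!$ with the correct factorial. I do not anticipate any genuine conceptual obstacle, as this is the classical integral-remainder form of Taylor's theorem, quoted here from \cite{Apostol}. Its role is purely as the scalar engine for the quaternion Taylor theorem that follows, where the real difficulty — noncommutativity and the correct placement of the GHR derivatives relative to the increments $\lambda,\lambda^i,\lambda^j,\lambda^k$ — will appear.
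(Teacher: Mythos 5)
Your proof is correct: the base case is the fundamental theorem of calculus, the integration-by-parts bookkeeping (the sign in $v=-\frac{(x_0+h-t)^k}{k!}$ and the boundary evaluation yielding $f^{(k)}(x_0)\frac{h^k}{k!}$) is right, and the identity $R_{k-1}=f^{(k)}(x_0)\frac{h^k}{k!}+R_k$ closes the induction. Note, however, that the paper offers no proof to compare against — it quotes this lemma directly from \cite{Apostol} as a known classical result and only uses it as the scalar input to the quaternion Taylor theorems; your argument is the standard textbook proof of that cited result.
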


\begin{theorem}[(Taylor's Theorem of Left Form)]\label{thm:lefttaylorthm}
 Let $f:S\subseteq \mathbb{H} \rightarrow\mathbb{H}$ be continuous and its 3-times left HR derivatives exist and are continuous in the set $S$. If $q_0,q_0+\lambda\in S$ such that the segment joining them is also in $S$, then
\begin{equation}\label{eq:lefttaylorthm}
\begin{split}
f(q_0+\lambda)-f(q_0)&=\sum_{\mu}\frac{\partial f(q_0)}{\partial q^{\mu}}\lambda^{\mu}+\frac{1}{2}\sum_{\mu,\nu}\frac{\partial^2 f(q_0)}{\partial q^{\nu}\partial q^{\mu}}\lambda^{\nu}\lambda^{\mu}+O(\lambda^3)\\
&=\sum_{\mu}\frac{\partial f(q_0)}{\partial q^{\mu*}}\lambda^{\mu*}+\frac{1}{2}\sum_{\mu,\nu}\frac{\partial^2 f(q_0)}{\partial q^{\nu*}\partial q^{\mu*}}\lambda^{\nu*}\lambda^{\mu*}+O(\lambda^3)\quad
as \;\lambda\rightarrow 0\\
\end{split}
\end{equation}
where $\mu,\nu\in\{1,i,j,k\}$, $\frac{\partial^2 f}{\partial q^{\nu}\partial q^{\mu}}$ and $\frac{\partial^2 f}{\partial q^{\nu*}\partial q^{\mu*}}$ are the second order HR derivatives, given in Section \ref{sec:hgderiv}.
\end{theorem}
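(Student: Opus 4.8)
The plan is to reduce the quaternion statement to the univariate Taylor theorem (Lemma \ref{lem:taylorthm1}), in exactly the way the Mean Value Theorem (Theorem \ref{thm:qmvt}) was reduced to $g(1)-g(0)=\int_0^1 g'(t)\,dt$. First I would set $g(t)=f(q_0+t\lambda)$ for $t\in[0,1]$, where $\lambda=q_1-q_0$. Then $g$ is an $\mathbb{H}$-valued map which is continuous on $[0,1]$ and, because $f$ has continuous third-order left HR derivatives along the segment, three times continuously differentiable on $(0,1)$, with $g(0)=f(q_0)$ and $g(1)=f(q_0+\lambda)$.

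The core computation is to evaluate $g'$, $g''$ and $g'''$ by iterating the left chain rule of Corollary \ref{cor:leftchainrl}. The first derivative is already recorded in \eqref{eq:dgdtmvt}, namely $g'(t)=\sum_{\mu}\frac{\partial f(q_0+t\lambda)}{\partial q^{\mu}}\lambda^{\mu}$ with $\mu\in\{1,i,j,k\}$. The key observation is that each involution $\lambda^{\mu}$ is a fixed quaternion, independent of $t$, so it passes through $d/dt$, and I may apply Corollary \ref{cor:leftchainrl} once more to the composite $t\mapsto \frac{\partial f}{\partial q^{\mu}}(q_0+t\lambda)$, now treating $\frac{\partial f}{\partial q^{\mu}}$ as a genuine function of $q$. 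This yields $g''(t)=\sum_{\mu,\nu}\frac{\partial^2 f(q_0+t\lambda)}{\partial q^{\nu}\partial q^{\mu}}\lambda^{\nu}\lambda^{\mu}$ and, after one more iteration, $g'''(t)=\sum_{\mu,\nu,\rho}\frac{\partial^3 f(q_0+t\lambda)}{\partial q^{\rho}\partial q^{\nu}\partial q^{\mu}}\lambda^{\rho}\lambda^{\nu}\lambda^{\mu}$, with the mixed higher-order HR derivatives understood in the sense of Section \ref{sec:hgderiv}.

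Next I would apply Lemma \ref{lem:taylorthm1} to $g$ with $x_0=0$, $h=1$ and $k=2$ — applied to each of the four real components of $g$, since the integral-remainder form extends componentwise to $\mathbb{H}$-valued functions — to obtain $g(1)=g(0)+g'(0)+\tfrac12 g''(0)+R_2$ with $R_2=\int_0^1\tfrac12 g'''(t)(1-t)^2\,dt$. Substituting the expressions for $g(0),g(1),g'(0),g''(0)$ produces exactly the first line of \eqref{eq:lefttaylorthm}. To identify $R_2$ as $O(\lambda^3)$, I would use that involutions preserve the modulus, $|\lambda^{\mu}|=|\lambda|$, together with continuity (hence boundedness on the compact segment) of the third-order HR derivatives: each of the $64$ summands of $g'''(t)$ is bounded by $C|\lambda|^3$ for a constant $C$ independent of $t$, so $|R_2|\le \tfrac12\cdot 64\,C|\lambda|^3\int_0^1(1-t)^2\,dt=O(|\lambda|^3)$ as $\lambda\to0$. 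The second equality in \eqref{eq:lefttaylorthm} follows by repeating the argument verbatim with the conjugate GHR derivatives, replacing each $\lambda^{\mu}$ by $\lambda^{\mu*}$.

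The main obstacle is the iterated chain rule in the noncommutative setting: I must ensure that differentiating $g'$ produces the mixed derivative in the precise order $\frac{\partial^2 f}{\partial q^{\nu}\partial q^{\mu}}$ and that the constant involutions accumulate on the right in the matching order $\lambda^{\nu}\lambda^{\mu}$, since by Section \ref{sec:hgderiv} the mixed second-order HR derivatives do \emph{not} commute. The reason the bookkeeping works cleanly is that $\lambda^{\mu}$ is constant in $t$ and is multiplied only \emph{after} the spatial GHR differentiation, so I never need the more delicate product rule of Theorem \ref{thm:nwpleftpdrl}: the chain rule acts on $\frac{\partial f}{\partial q^{\mu}}$ as a function of $q$, and the trailing constants simply stack in order. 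Verifying this ordering carefully, and confirming that three continuous HR derivatives suffice to bound the remainder uniformly in $t$, is the one place where attention is required.
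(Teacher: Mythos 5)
Your proposal is correct and takes essentially the same route as the paper's own proof: the auxiliary function $g(t)=f(q_0+t\lambda)$, the iterated chain rule of Corollary \ref{cor:leftchainrl} giving $g'$, $g''$, $g'''$ with the involutions $\lambda^{\mu}$ stacking on the right, the univariate Taylor theorem (Lemma \ref{lem:taylorthm1}) with integral remainder, and the $O(|\lambda|^3)$ estimate of $R_2$ from boundedness of the third-order derivatives. The extra care you take --- applying Lemma \ref{lem:taylorthm1} componentwise to the $\mathbb{H}$-valued $g$, and checking that right-multiplication by the constant $\lambda^{\mu}$ commutes with $d/dt$ so the noncommuting mixed derivatives appear in the correct order --- only makes explicit what the paper leaves implicit.
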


\begin{proof}
Define the auxiliary function $g(t)=f(q_0+t\lambda)$ with $0\leq t\leq1$. By using the chain rule in Corollary \ref{cor:leftchainrl}, we obtain:
\begin{equation}
\begin{split}
&g'(t)=\sum_{\mu}\frac{\partial f(q_0+t\lambda)}{\partial q^{\mu}}\lambda^{\mu}\\
&g''(t)=\sum_{\mu,\nu}\frac{\partial^2 f(q_0+t\lambda)}{\partial q^{\nu}\partial q^{\mu}}\lambda^{\nu}\lambda^{\mu}\\
&g'''(t)=\sum_{\mu,\nu,\eta}\frac{\partial^3 f(q_0+t\lambda)}{\partial q^{\eta}\partial q^{\nu}\partial q^{\mu}}\lambda^{\eta}\lambda^{\nu}\lambda^{\mu}
\end{split}
\end{equation}
where $\mu,\nu,\eta\in\{1,i,j,k\}$. The second order Taylor polynomial in Lemma \ref{lem:taylorthm1} gives
\begin{equation}
g(1)=g(0)+g'(0)+\frac{1}{2}g''(0)+R_2
\end{equation}
This is equivalent to
\begin{equation}
f(q_0+\lambda)=f(q_0)+\sum_{\mu}\frac{\partial f(q_0)}{\partial q^{\mu}}\lambda^{\mu}+\frac{1}{2}\sum_{\mu,\nu}\frac{\partial^2 f(q_0)}{\partial q^{\nu}\partial q^{\mu}}\lambda^{\nu}\lambda^{\mu}+R_2
\end{equation}
where
\begin{equation}
R_2=\int_0^1\frac{(1-t)^2}{2}g'''(t)dt=\int_0^1\frac{(1-t)^2}{2}\sum_{\mu,\nu,\eta}\frac{\partial^3 f(q_0+t\lambda)}{\partial q^{\eta}\partial q^{\nu}\partial q^{\mu}}\lambda^{\eta}\lambda^{\nu}\lambda^{\mu}dt
\end{equation}
This integral contains three factors of $\lambda$ in it and the remaining factors are bounded. So, $R_2$ is of the order of $|\lambda|^3$ making the fraction $\frac{|R_2|}{|\lambda|^3}$ bounded, as $\lambda \rightarrow 0$. Hence, the first equality of the theorem follows, and the second equality can be proved in similar manner.
\end{proof}

\begin{corollary}\label{cor:lefttaylorcor}
Let $f:S\subseteq \mathbb{H} \rightarrow\mathbb{R}$ be continuous and its 3-times left HR derivatives exist and are continuous in the set $S$. If $q_0,q_0+\lambda\in S$ such that the segment joining them is also in $S$, then
\begin{equation}
\begin{split}
f(q_0+\lambda)-f(q_0)&=4\mathfrak{R}\left(\frac{\partial f(q_0)}{\partial q}\lambda\right)+2\sum_{\nu}\mathfrak{R}\left(\frac{\partial^2 f(q_0)}{\partial q^{\nu}\partial q}\lambda^{\nu}\lambda\right)+O(|\lambda|^3)\\
&=4\mathfrak{R}\left(\frac{\partial f(q_0)}{\partial q^*}\lambda^*\right)+2\sum_{\nu}\mathfrak{R}\left(\frac{\partial^2 f(q_0)}{\partial q^{\nu*}\partial q^*}\lambda^{\nu*}\lambda^*\right)+O(|\lambda|^3)\;\;
as \;\lambda\rightarrow 0
\end{split}
\end{equation}
where $\nu\in\{1,i,j,k\}$, $\frac{\partial^2 f}{\partial q^{\nu}\partial q}$ and $\frac{\partial^2 f}{\partial q^{\nu*}\partial q^*}$ are the second order HR derivatives in Section \ref{sec:hgderiv}
\end{corollary}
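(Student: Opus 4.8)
The plan is to reduce the corollary to Theorem~\ref{thm:lefttaylorthm} by exploiting the fact that, for a real-valued $f$, the two sums appearing in \eqref{eq:lefttaylorthm} collapse to real parts, exactly as the single first-order sum does in Corollary~\ref{cor:realmvncor}. Rather than manipulate the double sum $\frac{1}{2}\sum_{\mu,\nu}\frac{\partial^2 f}{\partial q^{\nu}\partial q^{\mu}}\lambda^{\nu}\lambda^{\mu}$ directly (its middle factor $\lambda^{\nu}$ makes factoring the inner $\mu$-sum awkward), I would work with the auxiliary \emph{real} scalar function $g(t)=f(q_0+t\lambda)$, $0\le t\le 1$, already introduced in the proof of Theorem~\ref{thm:lefttaylorthm}, and differentiate its reduced first-derivative form.

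First I would recall from Corollary~\ref{cor:realmvncor} that, because $f$ is real-valued, the chain rule of Corollary~\ref{cor:leftchainrl} together with the identities $\frac{\partial f}{\partial q^{\eta}}\lambda^{\eta}=(\frac{\partial f}{\partial q}\lambda)^{\eta}$ and $\sum_{\eta\in\{1,i,j,k\}}p^{\eta}=4\mathfrak{R}(p)$ (the latter from \eqref{eq:z1qlink}) give the compact form $g'(t)=4\mathfrak{R}\!\left(\frac{\partial f(q_0+t\lambda)}{\partial q}\lambda\right)$. The key step is then to differentiate this once more in $t$. Since $\mathfrak{R}$ is $\mathbb{R}$-linear and $\lambda$ is constant, the $t$-derivative passes inside $\mathfrak{R}$, and applying the chain rule of Corollary~\ref{cor:leftchainrl} to the \emph{quaternion-valued} function $\frac{\partial f}{\partial q}$ (not to $f$ itself) yields $\frac{d}{dt}\frac{\partial f(q_0+t\lambda)}{\partial q}=\sum_{\nu}\frac{\partial^2 f(q_0+t\lambda)}{\partial q^{\nu}\partial q}\lambda^{\nu}$, where the left superscript is the outer derivative as fixed in Section~\ref{sec:hgderiv}. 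Hence $g''(t)=4\sum_{\nu}\mathfrak{R}\!\left(\frac{\partial^2 f(q_0+t\lambda)}{\partial q^{\nu}\partial q}\lambda^{\nu}\lambda\right)$.

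With $g'$ and $g''$ in hand I would substitute into the univariate second-order expansion $g(1)=g(0)+g'(0)+\frac{1}{2}g''(0)+R_2$ from Lemma~\ref{lem:taylorthm1}, evaluate at $t=0$, and collect terms; the factor $\frac12$ combines with the $4$ carried by $g''$ to produce the coefficient $2$ in front of the second-order sum, giving the first equality. The remainder $R_2=\int_0^1\frac{(1-t)^2}{2}g'''(t)\,dt$ is $O(|\lambda|^3)$ by the same argument as in the proof of Theorem~\ref{thm:lefttaylorthm}, since $g'''$ carries three factors of $\lambda$ while the continuous third-order HR derivatives stay bounded on the segment. The second (conjugate) equality is obtained identically, starting instead from the conjugate reduction $g'(t)=4\mathfrak{R}\!\left(\frac{\partial f(q_0+t\lambda)}{\partial q^{*}}\lambda^{*}\right)$ of Corollary~\ref{cor:realmvncor} and differentiating $\frac{\partial f}{\partial q^{*}}$, or equivalently by invoking $\mathfrak{R}(pq)=\mathfrak{R}(p^{*}q^{*})$ term by term. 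The only genuinely delicate point is the justification of differentiating under $\mathfrak{R}$ and the correct bookkeeping of operator order (the left superscript denoting the outer derivative); once those are settled, the rest is a direct transcription of the scalar Taylor formula.
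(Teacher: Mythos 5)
Your proof is correct, and it uses the same toolkit the paper's one-sentence sketch invokes (the auxiliary function $g(t)=f(q_0+t\lambda)$, Lemma \ref{lem:taylorthm1}, the chain rule of Corollary \ref{cor:leftchainrl}, and the real-valuedness collapse from Corollary \ref{cor:realmvncor}), but you deploy them in a different order at the one point where the order matters. The paper's pattern, copied from the proof of Corollary \ref{cor:realmvncor}, is to take the already-proved expansion of Theorem \ref{thm:lefttaylorthm} and collapse its sums afterwards. For the first-order sum this is immediate; for the double sum it is not, because of the $\lambda^{\nu}$ sitting in the middle: one has to establish, for real-valued $f$,
\begin{equation*}
\frac{\partial^2 f}{\partial q^{\nu}\partial q^{\mu}}\,\lambda^{\nu}\lambda^{\mu}
=\left(\frac{\partial^2 f}{\partial q^{\mu^{*}\nu}\partial q}\,\lambda^{\mu^{*}\nu}\lambda\right)^{\mu}
\end{equation*}
via \eqref{pr:lefthrpr2}, \eqref{pr:leftghrpr1}, \eqref{pr:def1qmunu} and \eqref{pr:pqmu}, then re-index $\nu\mapsto\mu^{*}\nu$ (a bijection on the four involutions for each fixed $\mu$, signs being irrelevant to rotations), and only then apply \eqref{eq:z1qlink} to the $\mu$-sum. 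You avoid this re-indexing entirely: collapsing $g'$ first and then differentiating in $t$ is legitimate because $d/dt$ is a real derivative, so it commutes with $\mathfrak{R}$ and with right multiplication by the constant $\lambda$, and the chain rule applied to the quaternion-valued function $\partial f/\partial q$ (available since $f$ has continuous third-order HR derivatives) hands you $g''$ already in the collapsed form $4\sum_{\nu}\mathfrak{R}\bigl(\frac{\partial^{2}f}{\partial q^{\nu}\partial q}\lambda^{\nu}\lambda\bigr)$. After that, Lemma \ref{lem:taylorthm1} and the remainder estimate are verbatim from the proof of Theorem \ref{thm:lefttaylorthm}, with the bonus that your $g$ is genuinely real-valued, so the univariate lemma applies directly rather than componentwise. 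One small caveat: your parenthetical alternative for the conjugate form, pushing $\mathfrak{R}(pq)=\mathfrak{R}(p^{*}q^{*})$ through term by term, is not quite as routine as stated, since for real $f$ conjugation reverses the order of the mixed operators, $\bigl(\frac{\partial^{2}f}{\partial q^{\nu}\partial q}\bigr)^{*}=\frac{\partial^{2}f}{\partial q^{*}\partial q^{\nu*}}$ (Section \ref{sec:hgderiv}), so the resulting terms match the stated ones only after summing over $\nu$; your primary route, rerunning the argument with the conjugate chain rule, is the correct one and needs no such repair.
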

\begin{proof}
By the term $\frac{\partial \mathfrak{R}(q)}{\partial q}$ given in Table \ref{tb:fmlleftderiv} and the chain rule in Corollary \ref{cor:leftchainrl}, the corollary can be proved similar to the proof of Corollary \ref{cor:realmvncor}.
\end{proof}

\begin{theorem}[(Taylor's Theorem of Center Form)]\label{thm:cttaylorthm}
Let $f:S\subseteq \mathbb{H} \rightarrow\mathbb{R}$ be continuous and its 3-times left HR derivatives exist and are continuous in the set $S$. If $q_0,q_0+\lambda\in S$ such that the segment joining them is also in $S$, then
\begin{equation}\label{eq:cttaylorthm}
\begin{split}
f(q_0+\lambda)-f(q_0)&=\sum_{\mu}\frac{\partial f(q_0)}{\partial q^{\mu}}\lambda^{\mu}+\frac{1}{2}\sum_{\mu,\nu}\lambda^{\mu*}\frac{\partial^2 f(q_0)}{\partial q^{\nu}\partial q^{\mu*}}\lambda^{\nu}+O(\lambda^3)\\
&=\sum_{\mu}\frac{\partial f(q_0)}{\partial q^{\mu*}}\lambda^{\mu*}
+\frac{1}{2}\sum_{\mu,\nu}\lambda^{\mu}\frac{\partial^2 f(q_0)}{\partial q^{\nu*}\partial q^{\mu}}\lambda^{\nu*}+O(\lambda^3)\;\;
as \;\lambda\rightarrow 0\\
\end{split}
\end{equation}
where $\mu,\nu\in\{1,i,j,k\}$, $\frac{\partial^2 f}{\partial q^{\nu}\partial q^{\mu*}}$ and $\frac{\partial^2 f}{\partial q^{\nu*}\partial q^{\mu}}$ are the second order HR derivatives, given in Section \ref{sec:hgderiv}.
\end{theorem}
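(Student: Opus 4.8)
The plan is to mirror the proof of Theorem~\ref{thm:lefttaylorthm}: introduce the auxiliary scalar-path function $g(t)=f(q_0+t\lambda)$ on $[0,1]$, apply the univariate Taylor expansion of Lemma~\ref{lem:taylorthm1} to second order, and read off the two displayed identities from $g(1)=g(0)+g'(0)+\frac{1}{2}g''(0)+R_2$. Since $f$ maps into $\mathbb{R}$, here $g$ is a genuine real-valued function of the real variable $t$, and this is exactly the extra structure that produces the ``centered'' arrangement of the second-order term; it is why the hypothesis is $f:S\to\mathbb{R}$ rather than $f:S\to\mathbb{H}$ as in the left form.

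First I would compute $g'(t)$. By the chain rule of Corollary~\ref{cor:leftchainrl} (with the inner map the affine path $q_0+t\lambda$), $g'(t)=\sum_{\mu}\frac{\partial f(q_0+t\lambda)}{\partial q^{\mu}}\lambda^{\mu}$ with $\mu\in\{1,i,j,k\}$; this supplies the first-order term $\sum_\mu \frac{\partial f(q_0)}{\partial q^\mu}\lambda^\mu$ at $t=0$. The decisive observation is that $g'(t)\in\mathbb{R}$, so $g'(t)=\bigl(g'(t)\bigr)^{*}$. Conjugating the sum and using the real-function conjugate rule $\bigl(\frac{\partial f}{\partial q^{\mu}}\bigr)^{*}=\frac{\partial f}{\partial q^{\mu*}}$ together with $(\lambda^{\mu})^{*}=\lambda^{\mu*}$ (recall that $(pq)^*=q^*p^*$ reverses the order), I obtain the equivalent representation $g'(t)=\sum_{\mu}\lambda^{\mu*}\frac{\partial f(q_0+t\lambda)}{\partial q^{\mu*}}$, in which the differential factor $\lambda^{\mu*}$ now sits on the left as a constant.

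Next I would differentiate this second representation in $t$. Because each $\lambda^{\mu*}$ is constant, only $\frac{\partial f}{\partial q^{\mu*}}$ is differentiated, and applying Corollary~\ref{cor:leftchainrl} once more gives $\frac{d}{dt}\frac{\partial f}{\partial q^{\mu*}}=\sum_{\nu}\frac{\partial^2 f(q_0+t\lambda)}{\partial q^{\nu}\partial q^{\mu*}}\lambda^{\nu}$, whence $g''(t)=\sum_{\mu,\nu}\lambda^{\mu*}\frac{\partial^2 f(q_0+t\lambda)}{\partial q^{\nu}\partial q^{\mu*}}\lambda^{\nu}$. Evaluating at $t=0$ yields precisely the centered second-order term of the first identity. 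Substituting $g'(0)$ and $g''(0)$ into the second-order univariate Taylor formula and writing $R_2=\int_0^1\frac{(1-t)^2}{2}g'''(t)\,dt$, the remainder carries three factors of $\lambda$ against continuous (hence bounded) third derivatives, so $R_2=O(|\lambda|^3)$ as $\lambda\to0$, exactly as in Theorem~\ref{thm:lefttaylorthm}. The second identity follows symmetrically: one starts instead from the conjugate-involution form $g'(t)=\sum_\mu \frac{\partial f}{\partial q^{\mu*}}\lambda^{\mu*}$ of the chain rule (the form used in the second equality of Theorem~\ref{thm:qmvt}) for the first-order term, conjugates it to $g'(t)=\sum_\mu \lambda^{\mu}\frac{\partial f}{\partial q^{\mu}}$, and then differentiates using the conjugate-differential version of the chain rule so that the inner factor appears as $\lambda^{\nu*}$.

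The main obstacle I anticipate is bookkeeping rather than analysis. The conjugation step is licit only because $f$ is real-valued, and it must be paired with the correctly indexed variant of the chain rule so that after the second differentiation the two differential factors land as $\lambda^{\mu*}\,(\cdots)\,\lambda^{\nu}$ (respectively $\lambda^{\mu}\,(\cdots)\,\lambda^{\nu*}$) with the derivative genuinely sandwiched in the middle. One must also resist commuting factors, since $\mathbb{H}$ is noncommutative and the order forced by $(pq)^*=q^*p^*$ is exactly what creates the centered form. Once these orderings are tracked, everything else — continuity, the univariate expansion, and the $O(|\lambda|^3)$ bound — is routine and identical to the left-form argument.
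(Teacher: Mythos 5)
Your proof is correct and follows the route the paper intends: the paper omits the proof of this theorem, stating only that it is ``essentially the same'' as that of Theorem~\ref{thm:lefttaylorthm}, namely the auxiliary path function $g(t)=f(q_0+t\lambda)$, the chain rule of Corollary~\ref{cor:leftchainrl}, and the univariate Taylor formula of Lemma~\ref{lem:taylorthm1} with the $O(|\lambda|^3)$ remainder bound. Your conjugation step --- using that $g'(t)$ is real together with $\left(\partial f/\partial q^{\mu}\right)^{*}=\partial f/\partial q^{\mu*}$ and $(\lambda^{\mu})^{*}=\lambda^{\mu*}$ to rewrite $g'(t)=\sum_{\mu}\lambda^{\mu*}\,\partial f/\partial q^{\mu*}$ before differentiating once more along the path --- is precisely the detail that produces the centered second-order term $\lambda^{\mu*}\frac{\partial^2 f}{\partial q^{\nu}\partial q^{\mu*}}\lambda^{\nu}$, and is the step the paper leaves implicit when it appeals to the analogy with the left form.
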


\begin{theorem}[(Taylor's Theorem of Right Form)]\label{thm:righttaylorthm}
 Let $f:S\subseteq \mathbb{H} \rightarrow\mathbb{H}$ be continuous and its 3-times right HR derivatives exist and are continuous in the set $S$. If $q_0,q_0+\lambda\in S$ such that the segment joining them is also in $S$, then
\begin{equation}
\begin{split}
f(q_0+\lambda)-f(q_0)&=\sum_{\mu}\lambda^{\mu}\frac{\partial_r f(q_0)}{\partial q^{\mu}}+\frac{1}{2}\sum_{\mu,\nu}\lambda^{\mu}\lambda^{\nu}\frac{\partial^2_r f(q_0)}{\partial q^{\nu}\partial q^{\mu}}+O(\lambda^3)\\
&=\sum_{\mu}\lambda^{\mu*}\frac{\partial_r f(q_0)}{\partial q^{\mu*}}+\frac{1}{2}\sum_{\mu,\nu}\lambda^{\mu*}\lambda^{\nu*}\frac{\partial^2_r f(q_0)}{\partial q^{\nu*}\partial q^{\mu*}}+O(\lambda^3)\;\;
as \;\lambda\rightarrow 0
\end{split}
\end{equation}
where $\mu,\nu\in\{1,i,j,k\}$, $\frac{\partial^2_r f}{\partial q^{\nu}\partial q^{\mu}}$ and $\frac{\partial^2_r f}{\partial q^{\nu*}\partial q^{\mu*}}$ are the second order HR derivatives in Section \ref{sec:hgderiv}
\end{theorem}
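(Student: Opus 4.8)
The plan is to mirror the proof of Theorem~\ref{thm:lefttaylorthm} verbatim, reducing the quaternion statement to the univariate Taylor theorem (Lemma~\ref{lem:taylorthm1}) applied to a scalar‑parametrised auxiliary function, but now propagating every $\lambda$‑factor as a \emph{left} multiplier so as to respect the right GHR calculus. First I would introduce $g(t)=f(q_0+t\lambda)$ for $0\le t\le 1$; since $f$ has continuous third‑order right HR derivatives on the segment joining $q_0$ to $q_0+\lambda$, the composite $g$ is three times continuously differentiable on $[0,1]$.

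The central computation is to differentiate $g$ three times using the right chain rule (Corollary~\ref{cor:rightchainrl}), where the inner map $t\mapsto q_0+t\lambda$ is differentiated with respect to the real parameter $t$. The only inputs needed are the $t$‑derivatives of the involutions of the inner map: because each involution $(\cdot)^{\mu}$ is $\mathbb{R}$‑linear, one has $(q_0+t\lambda)^{\mu}=q_0^{\mu}+t\lambda^{\mu}$, whence $\frac{d}{dt}(q_0+t\lambda)^{\mu}=\lambda^{\mu}$ for every $\mu\in\{1,i,j,k\}$. Feeding this into Corollary~\ref{cor:rightchainrl} deposits the inner factor on the left of the outer right HR derivative, giving
\begin{equation*}
g'(t)=\sum_{\mu}\lambda^{\mu}\frac{\partial_r f(q_0+t\lambda)}{\partial q^{\mu}}.
\end{equation*}
Since each $\lambda^{\mu}$ is constant in $t$ while each factor $\frac{\partial_r f}{\partial q^{\mu}}$ is itself a composite in $t$, two further applications of the same chain rule yield
\begin{equation*}
g''(t)=\sum_{\mu,\nu}\lambda^{\mu}\lambda^{\nu}\frac{\partial^2_r f(q_0+t\lambda)}{\partial q^{\nu}\partial q^{\mu}},\qquad
g'''(t)=\sum_{\mu,\nu,\eta}\lambda^{\mu}\lambda^{\nu}\lambda^{\eta}\frac{\partial^3_r f(q_0+t\lambda)}{\partial q^{\eta}\partial q^{\nu}\partial q^{\mu}}.
\end{equation*}

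With these derivatives in hand I would apply Lemma~\ref{lem:taylorthm1} with $x_0=0$, $h=1$, $k=2$, giving $g(1)=g(0)+g'(0)+\tfrac{1}{2}g''(0)+R_2$ with $R_2=\int_0^1\frac{(1-t)^2}{2}g'''(t)\,dt$. Substituting $g(0)=f(q_0)$ and $g(1)=f(q_0+\lambda)$ produces exactly the first‑ and second‑order terms of the claimed expansion. Because $g'''(t)$ carries precisely three factors of $\lambda$ while the third‑order right HR derivatives, being continuous on the compact segment, are bounded there, one gets $|R_2|\le C|\lambda|^3$, i.e. $R_2=O(|\lambda|^3)$ as $\lambda\to 0$. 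The second displayed identity follows identically after replacing each involution $q^{\mu}$ by its conjugate $q^{\mu*}$ and using $\frac{d}{dt}(q_0+t\lambda)^{\mu*}=\lambda^{\mu*}$.

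The main obstacle is not analytic but is one of noncommutative bookkeeping: I must verify that successive applications of the right chain rule deposit each new factor $\lambda^{\nu}$ immediately to the \emph{left} of the higher‑order derivative yet to the \emph{right} of the previously accumulated $\lambda^{\mu}$, so that the quadratic term appears precisely as $\lambda^{\mu}\lambda^{\nu}\frac{\partial^2_r f}{\partial q^{\nu}\partial q^{\mu}}$ and not in a permuted order. This is the exact mirror image of the ordering arising in Theorem~\ref{thm:lefttaylorthm}, and it is guaranteed by the structure of Corollary~\ref{cor:rightchainrl}, which always places the inner‑variable factor on the left; crucially, no commutativity of the quaternion product is ever invoked.
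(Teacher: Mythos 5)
Your proposal is correct and follows essentially the same route the paper intends: the paper omits this proof precisely because it mirrors that of Theorem \ref{thm:lefttaylorthm}, namely forming $g(t)=f(q_0+t\lambda)$, differentiating three times via the right chain rule (Corollary \ref{cor:rightchainrl}) so that the factors $\lambda^{\mu}$ accumulate on the left, applying the univariate Taylor theorem (Lemma \ref{lem:taylorthm1}) at second order, and bounding the integral remainder by $O(|\lambda|^3)$. Your explicit check of the noncommutative ordering, that each new factor $\lambda^{\nu}$ lands between the previously accumulated $\lambda^{\mu}$ and the higher-order derivative, matches exactly the ordering $\lambda^{\mu}\lambda^{\nu}\frac{\partial^2_r f}{\partial q^{\nu}\partial q^{\mu}}$ in the statement.
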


\begin{corollary}\label{cor:righttaylorcor}
Let $f:S\subseteq \mathbb{H} \rightarrow\mathbb{R}$ be continuous and its 3-times right HR derivatives exist and are continuous in the set $S$. If $q_0,q_0+\lambda\in S$ such that the segment joining them is also in $S$, then

\begin{equation}
\begin{split}
f(q_0+\lambda)-f(q_0)&=4\mathfrak{R}\left(\lambda\frac{\partial_r f(q_0)}{\partial q}\right)+2\sum_{\nu}\mathfrak{R}\left(\lambda\lambda^{\nu}\frac{\partial^2_r f(q_0)}{\partial q^{\nu}\partial q}\right)+O(|\lambda|^3)\\
&=4\mathfrak{R}\left(\lambda^*\frac{\partial_r f(q_0)}{\partial q^*}\right)+2\sum_{\nu}\mathfrak{R}\left(\lambda^*\lambda^{\nu*}\frac{\partial^2_r f(q_0)}{\partial q^{\nu*}\partial q^*}\right)+O(|\lambda|^3)\;as \;\lambda\rightarrow 0
\end{split}
\end{equation}\label{eq:rttaylorrealf}
where $\nu\in\{1,i,j,k\}$, $\frac{\partial^2_r f}{\partial q^{\nu}\partial q}$ and $\frac{\partial^2_r f}{\partial q^{\nu*}\partial q^*}$ are the second order HR derivatives in Section \ref{sec:hgderiv}
\end{corollary}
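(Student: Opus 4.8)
The plan is to derive Corollary \ref{cor:righttaylorcor} directly from Theorem \ref{thm:righttaylorthm} by specializing the right-form Taylor expansion to a real-valued $f$, mirroring exactly the way Corollary \ref{cor:lefttaylorcor} is obtained from Theorem \ref{thm:lefttaylorthm} (and Corollary \ref{cor:realmvncor} from Theorem \ref{thm:qmvt}). The mechanism throughout is that for real-valued $f$ every involution satisfies $f^{\eta}=f$, which collapses the right HR/GHR involution identities, together with the summation identity $\sum_{\eta\in\{1,i,j,k\}}p^{\eta}=4\mathfrak{R}(p)$ that follows from \eqref{eq:z1qlink}. Concretely, I would start from $f(q_0+\lambda)-f(q_0)=\sum_{\mu}\lambda^{\mu}\frac{\partial_r f(q_0)}{\partial q^{\mu}}+\frac{1}{2}\sum_{\mu,\nu}\lambda^{\mu}\lambda^{\nu}\frac{\partial^2_r f(q_0)}{\partial q^{\nu}\partial q^{\mu}}+O(\lambda^3)$ and simplify the two sums in turn.

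For the linear term I would invoke the first line of \eqref{pr:righthrpr2}, which for real $f$ gives $\frac{\partial_r f}{\partial q^{\mu}}=\bigl(\frac{\partial_r f}{\partial q}\bigr)^{\mu}$ for $\mu\in\{1,i,j,k\}$. Combining this with the homomorphism property \eqref{pr:pqmu}, each summand becomes $\lambda^{\mu}\bigl(\frac{\partial_r f}{\partial q}\bigr)^{\mu}=\bigl(\lambda\frac{\partial_r f}{\partial q}\bigr)^{\mu}$, so that $\sum_{\mu}\lambda^{\mu}\frac{\partial_r f}{\partial q^{\mu}}=\sum_{\mu}\bigl(\lambda\frac{\partial_r f}{\partial q}\bigr)^{\mu}=4\mathfrak{R}\bigl(\lambda\frac{\partial_r f}{\partial q}\bigr)$ by the involution-sum identity. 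This reproduces the stated first-order term $4\mathfrak{R}\bigl(\lambda\frac{\partial_r f(q_0)}{\partial q}\bigr)$.

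The main work, and the only place I expect real difficulty, is the quadratic term. First I would establish the second-order involution rule for real $f$, namely $\bigl(\frac{\partial^2_r f}{\partial q^{\nu}\partial q^{\mu}}\bigr)^{\sigma}=\frac{\partial^2_r f}{\partial q^{\sigma\nu}\partial q^{\sigma\mu}}$, by applying \eqref{pr:grightghrpr1} twice—once to the outer derivative and once to the inner one—and using $f^{\sigma}=f$. Expanding the target through $\mathfrak{R}(p)=\frac{1}{4}\sum_{\sigma}p^{\sigma}$ and using \eqref{pr:pqmu}, \eqref{pr:def1qmunu} and this rule yields $2\sum_{\nu}\mathfrak{R}\bigl(\lambda\lambda^{\nu}\frac{\partial^2_r f}{\partial q^{\nu}\partial q}\bigr)=\frac{1}{2}\sum_{\nu,\sigma}\lambda^{\sigma}\lambda^{\sigma\nu}\frac{\partial^2_r f}{\partial q^{\sigma\nu}\partial q^{\sigma}}$. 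It then remains to match this against $\frac{1}{2}\sum_{\mu,\nu}\lambda^{\mu}\lambda^{\nu}\frac{\partial^2_r f}{\partial q^{\nu}\partial q^{\mu}}$ under the reindexing $\mu=\sigma$, $\nu=\sigma\nu'$. The delicate point is that the product $\sigma\nu'$ generally lands in $\{\pm1,\pm i,\pm j,\pm k\}$ rather than in $\{1,i,j,k\}$; I would resolve this by observing that the involution superscript depends only on the rotation axis, so that $q^{-\eta}=q^{\eta}$ and likewise $\frac{\partial_r f}{\partial q^{-\eta}}=\frac{\partial_r f}{\partial q^{\eta}}$, whence for each fixed $\sigma$ the map $\nu'\mapsto\sigma\nu'$ is a bijection of the four involution directions. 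This identifies the two double sums term-by-term and produces the stated quadratic term $2\sum_{\nu}\mathfrak{R}\bigl(\lambda\lambda^{\nu}\frac{\partial^2_r f(q_0)}{\partial q^{\nu}\partial q}\bigr)$, completing the first equality. The second (conjugate) equality follows identically after replacing each derivative by its starred counterpart and using $\mathfrak{R}(p)=\mathfrak{R}(p^{*})$, so the two displayed forms agree.
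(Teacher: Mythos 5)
Your proof is correct and takes essentially the same approach as the paper's: the paper omits this proof, stating it is the same as that of Corollary \ref{cor:lefttaylorcor} (and hence of Corollary \ref{cor:realmvncor}), whose mechanism --- using $f^{\eta}=f$ for real-valued $f$, the involution identities \eqref{pr:grightghrpr1}, \eqref{pr:pqmu}, \eqref{pr:def1qmunu}, and the collapse $\sum_{\eta}p^{\eta}=4\mathfrak{R}(p)$ from \eqref{eq:z1qlink} --- is exactly what you apply to Theorem \ref{thm:righttaylorthm}. Your explicit treatment of the sign/reindexing issue in the quadratic double sum (via $q^{-\eta}=q^{\eta}$) is a detail the paper leaves implicit, and it is handled correctly.
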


The proofs of Theorem \ref{thm:cttaylorthm} and \ref{thm:righttaylorthm} and Corollary \ref{cor:righttaylorcor}  are essentially the same as those of Theorem \ref{thm:lefttaylorthm} and Corollary \ref{cor:lefttaylorcor}, thus omitted.

\begin{remark}
The Taylor expansion in Theorem \ref{thm:lefttaylorthm} is concisely expressed using the HR derivatives. This is different from the Taylor expansion given by Schwartz \cite{Schwartz}, which decomposes a quaternion $q$ into two mutually perpendicular quaternions in a local coordinate system. In contrast, our idea is to extend the quaternion $q$ as an augmented quaternion based on quaternion involutions \cite{Ell}. Schwartz has also stated that his Taylor expansion would cause trouble when the function has terms $q\omega q$, where $\omega$ is a general quaternion, which limits the admissible class of functions to be real functions, the fixed point to be real and so on. Notice that there are no such restrictions in Theorem \ref{thm:lefttaylorthm}, which only requires the same condition as that in \cite{Schwartz}, that is, the functions $f$ should be  real analytic functions.
\end{remark}

\subsection{Method of Steepest Descent.}\label{sec:steepdesc}
In the CR calculus, the steepest descent direction of the real-valued function $f(z)$ is expressed as $-\frac{\partial f}{\partial z^*}$ in \cite{Brandwood}. For the quaternion case, we now show that the direction of steepest descent of real-valued function $f(q)$ is $-\frac{\partial f}{\partial q^*}$, that is, a generic extension from that in $\mathbb{R}$ and $\mathbb{C}$. Using the first order Taylor expansion in Corollary \ref{cor:lefttaylorcor}, we have
\begin{equation}\label{eq:sd1taylorexp}
\begin{split}
f(q)-f(q_k)= 4 \mathfrak{R}\left(\frac{\partial f(q_k)}{\partial q}(q-q_k)\right)+O\left(|q-q_k|^2\right)
\end{split}
\end{equation}
Set $q-q_k=\alpha d_k, \alpha\in\mathbb{R}^+$, then the second term can be neglected by shrinking $\alpha$.
In this case, $d_k$ is the direction of steepest descent if and only if $\mathfrak{R}\left(\frac{\partial f(q_k)}{\partial q}d_k\right)<0$.
 Obviously, in order to decrease $f(q)-f(q_k)$, the fastest way is to minimize $\mathfrak{R}\left(\frac{\partial f(q_k)}{\partial q}d_k\right)$. It then follows that
\begin{equation}
\left|\mathfrak{R}\left(\frac{\partial f(q_k)}{\partial q}d_k\right)\right|\leq \left|\frac{\partial f(q_k)}{\partial q}d_k\right|=
\left|\frac{\partial f(q_k)}{\partial q}\right|\left|d_k\right|
\end{equation}
Hence, the $\mathfrak{R}\left(\frac{\partial f(q_k)}{\partial q}d_k\right)$ is minimized if and only if $d_k=-\left(\frac{\partial f(q_k)}{\partial q}\right)^*=-\frac{\partial f(q_k)}{\partial q^*}$, showing that the steepest descent direction of $f(q)$ is $-\frac{\partial f(q_k)}{\partial q^*}$. The iterative rule of the steepest descent method can therefore be expressed as
\begin{equation}
q_{k+1}=q_k-\alpha\frac{\partial f(q_k)}{\partial q^*}
\end{equation}
where $\alpha\in\mathbb{R}^+$ is the step size.

\section{Applications of the GHR calculus}
The GHR calculus has an important significance in quaternion analysis, and can be used in optimization, statistics, signal processing, machine learning and other fields.

\subsection{The GHR Derivatives of Elementary Functions.}\label{sec:elemfuns}
We now present some of quaternion-valued derivatives of elementary functions, these functions are often used in nonlinear adaptive filters and quaternion-valued neural networks.

\begin{example}[(Power Function)]
Find the GHR derivative of the power function $f:\mathbb{H}\rightarrow \mathbb{H}$ given by
\begin{equation}
f(q)=q^n
\end{equation}
where $n$ is any positive integer number.
\end{example}
\textbf{Solution}: By using the product rule in Theorem \ref{thm:nwpleftpdrl}, it follows that
\begin{equation}
\begin{split}
\frac{\partial q^n}{\partial q^{\mu}}\mu&=\frac{\partial (qq^{n-1})}{\partial q^{\mu}}\mu=q\frac{\partial q^{n-1}}{\partial q^{\mu}}\mu+\frac{\partial q}{\partial q^{q^{n-1}\mu}}q^{n-1}\mu=q\frac{\partial q^{n-1}}{\partial q^{\mu}}\mu+\mathfrak{R}(q^{n-1}\mu)
\end{split}
\end{equation}
where the term $\frac{\partial q}{\partial q^{\mu}}\mu$, given in Table \ref{tb:fmlleftderiv}, was used in the last equality. Note that the above expression is recurrent about $\frac{\partial q^n}{\partial q^{\mu}}\mu$. Expanding this expression and using the initial condition $\frac{\partial q}{\partial q^{\mu}}\mu=\mathfrak{R}(\mu)$, yields
\begin{equation}\label{eq:res1powfun}
\frac{\partial q^n}{\partial q^{\mu}}\mu=\sum_{m=1}^nq^{n-m}\mathfrak{R}(q^{m-1}\mu)
\end{equation}
In a similar manner, we have
\begin{equation}
\begin{split}
\frac{\partial q^n}{\partial q^{\mu*}}\mu&=\frac{\partial (qq^{n-1})}{\partial q^{\mu*}}\mu=q\frac{\partial q^{n-1}}{\partial q^{\mu*}}\mu+\frac{\partial q}{\partial q^{q^{n-1}\mu*}}q^{n-1}\mu=q\frac{\partial q^{n-1}}{\partial q^{\mu*}}\mu-\frac{1}{2}(q^{n-1}\mu)^*
\end{split}
\end{equation}
which is equivalent to
\begin{equation}
\frac{\partial q^n}{\partial q^{\mu*}}\mu=-\frac{1}{2}\sum_{m=1}^nq^{n-m}(q^{m-1}\mu)^*
\end{equation}

\begin{example}\label{exp:expfun}
(Exponential Function): Find the GHR derivative of the function $f:\mathbb{H}\rightarrow \mathbb{H}$ given by
\begin{equation}
\exp(q)\triangleq \sum_{n=0}^{+\infty}\frac{q^n}{n!}
\end{equation}
\end{example}
\textbf{Solution}:
From \eqref{eq:res1powfun},  it follows that
\begin{equation}\label{eq:expderivative1}
\begin{split}
&\frac{\partial \exp(q)}{\partial q^{\mu}}\mu=\sum_{n=0}^{+\infty}\frac{1}{n!}\sum_{m=1}^nq^{n-m}\mathfrak{R}(q^{m-1}\mu)
\end{split}
\end{equation}
In a similar manner, we have
\begin{equation}\label{eq:expderivative5}
\frac{\partial \exp(q)}{\partial q^{\mu*}}\mu=-\frac{1}{2}\sum_{n=0}^{+\infty}\frac{1}{n!}\sum_{m=1}^nq^{n-m}(q^{m-1}\mu)^*
\end{equation}

\begin{remark}
The exponential function is the most important elementary function,
as both trigonometric functions and hyperbolic functions can be expressed in terms of the exponential function.
The elementary function in Example \ref{exp:expfun} is a power series, and does not change the direction of the vector part of quaternion.
Therefore, such elementary functions can swap positions with a quaternion $q$,
i.e., $f(q)q=qf(q)$, giving an important property, $f^*(q)=f(q^*)$, which can be used in practical applications,
such as quaternion neural networks \cite{Ell} and quaternion nonlinear adaptive filters \cite{Ujang11}.
It is important to note that if the quaternion variable $q$ degenerates into a real variable $x$ in the definitions of elementary functions in this subsection,
then the GHR derivatives simplify into the real derivative, e.g., the GHR derivative of the power function in \eqref{eq:res1powfun} will become $nx^{n-1}$.
Therefore, the GHR derivatives are a generalized form of the real derivatives and the real derivatives are a special case of the GHR derivatives.
\end{remark}

\subsection{Derivation of the QLMS Algorithm}\label{sec:qlms1}
In this section, we rederive the quaternion least mean square (QLMS) algorithm given in \cite{Took09} according to the rules of the GHR calculus. The same real-valued quadratic cost function as in LMS and CLMS is used, that is
\begin{equation}
J(n)=|e(n)|^2=e^*(n)e(n)
\end{equation}
where
\begin{equation}
e(n)=d(n)-y(n),\quad y(n)={\bf w}^T(n){\bf x}(n)
\end{equation}
The weight update of QLMS is then given by
\begin{equation}\label{eq:qlmsllearnrule}
{\bf w}(n+1)={\bf w}(n)-\alpha \nabla_{{\bf w}^*} J(n)={\bf w}(n)-\alpha\left(\frac{\partial J(n)}{\partial {\bf w}^*}\right)^T
\end{equation}
where $\alpha$ is the step size, and $\nabla_{{\bf w}^*} J(n)$ is the conjugate gradient of $J(n)$ with respect to ${\bf w}^*$. In Section \ref{sec:steepdesc}, we have shown the conjugate gradient is the direction of steepest descent. By using the novel product rule in Corollary \ref{cor:nwpleftpdrl}, the gradient is therefore calculated by
\begin{equation}\label{eq:qlmsdJdwconj}
\frac{\partial J(n)}{\partial {\bf w}^*}=e^*(n)\frac{\partial e(n)}{\partial {\bf w}^*}+\frac{\partial e^*(n)}{\partial {\bf w}^{e(n)*}}e(n)
\end{equation}
To find $\nabla_{{\bf w}^*} J(n)$, we need to calculate the following two derivatives
\begin{equation}\label{eq:qlmsdedwconj}
\frac{\partial e(n)}{\partial {\bf w}^*}=\frac{\partial \left(d(n)-{\bf w}^T(n){\bf x}(n)\right)}{\partial {\bf w}^*}=-\frac{\partial \left({\bf w}^T(n){\bf x}(n)\right)}{\partial {\bf w}^*}=\frac{1}{2}{\bf x}^H(n)
\end{equation}
\begin{equation}\label{eq:qlmsdedwconj2}
\begin{split}
\frac{\partial e^*(n)}{\partial {\bf w}^{e(n)*}}e(n)&=\frac{\partial \left(d^*(n)-{\bf x}^H(n){\bf w}^*(n)\right)}{\partial {\bf w}^{e(n)*}}e(n)\\
&=-\frac{\partial \left({\bf x}^H(n){\bf w}^*(n)\right)}{\partial {\bf w}^{e(n)*}}e(n)=-{\bf x}^H(n)\mathfrak{R}(e(n))
\end{split}
\end{equation}
where the terms $\frac{\partial (q\nu)}{\partial q^*}$ and $\frac{\partial (\omega q^*)}{\partial q^{\mu*}}\mu$ are given in Table \ref{tb:fmlleftderiv}, and are used in the
last equalities above. Substituting \eqref{eq:qlmsdedwconj} and \eqref{eq:qlmsdedwconj2} into \eqref{eq:qlmsdJdwconj} yields
\begin{equation}\label{eq:qlmsdJdwconj1b}
\begin{split}
\nabla_{{\bf w}^*} J(n)&=\left(\frac{\partial J(n)}{\partial {\bf w}^*}\right)^T=\frac{1}{2}e^*(n){\bf x}^*(n)-{\bf x}^*(n)\mathfrak{R}(e(n))\\
&=\left(\frac{1}{2}e^*(n)-\mathfrak{R}(e(n))\right){\bf x}^*(n)=-\frac{1}{2}e(n){\bf x}^*(n)
\end{split}
\end{equation}
Finally, the update of the adaptive weight vector of QLMS becomes
\begin{equation}\label{eq:qlmsllearnrule1b}
{\bf w}(n+1)={\bf w}(n)+\alpha\, e(n){\bf x}^*(n)
\end{equation}
where the constant $\frac{1}{2}$ in \eqref{eq:qlmsdJdwconj1b} is absorbed into $\alpha$.
\begin{remark}
Note that if we start from $y(n)={\bf w}^H(n){\bf x}(n)$, the final update rule would become ${\bf w}(n+1)={\bf w}(n)+\alpha\, {\bf x}(n)e^*(n)$.
The QLMS algorithm in \eqref{eq:qlmsllearnrule1b} is different from the QLMS in \cite{Took09}, due to the use of different product rule.
Although, the traditional product rule was used to derive the weight update rule in \cite{Took09}, our counter-examples in Section \ref{sec:limithr} illustrate that the traditional product rule is not applicable for the HR calculus. We therefore use the novel product rule within the GHR calculus to derive the weight update rule in \eqref{eq:qlmsdJdwconj}. Another advantage of QLMS in \eqref{eq:qlmsllearnrule1b}, derived based the GHR calculus, is that it has the same generic form as that of the CMLS \cite{Widrow}.
\end{remark}

\subsection{Derivation of the WL-QLMS Algorithm}
In this section, we rederive the WL-QLMS algorithm based on quaternion widely linear model given in \cite{Took11,Moreno12,Jvia10,Jvia11}. The cost function
to be minimized is a real-valued function of quaternion variables
\begin{equation}
J(n)=|e(n)|^2=e^*(n)e(n)
\end{equation}
\begin{equation}
e(n)=d(n)-y(n),\quad e^*(n)=d^*(n)-y^*(n)
\end{equation}
and
\begin{equation}\label{eq:nwwlqlmsoutput}
y(n)={\bf h}^H(n){\bf x}(n)+{\bf g}^H(n){\bf x}^i(n)+{\bf u}^H(n){\bf x}^j(n)+{\bf v}^H(n){\bf x}^k(n)
\end{equation}
The weight updates are then given by
\begin{equation}\label{eq:nwwlqlmsllearnrule}
\begin{split}
{\bf h}(n+1)={\bf h}(n)-\alpha \nabla_{{\bf h}^*} J(n),\quad {\bf u}(n+1)={\bf u}(n)-\alpha \nabla_{{\bf u}^*} J(n)\\
{\bf g}(n+1)={\bf g}(n)-\alpha \nabla_{{\bf g}^*} J(n), \quad {\bf v}(n+1)={\bf v}(n)-\alpha \nabla_{{\bf v}^*} J(n)
\end{split}
\end{equation}
where $\alpha$ is the step size, $\nabla_{{\bf h}^*} J(n)$, $\nabla_{{\bf g}^*} J(n)$, $\nabla_{{\bf u}^*} J(n)$ and $\nabla_{{\bf v}^*} J(n)$ are respectively the conjugate gradients of $J(n)$ with respect to ${\bf h}^*$, ${\bf g}^*$, ${\bf u}^*$ and ${\bf v}^*$. By using the novel product rule in Corollary \ref{cor:nwpleftpdrl}, $\nabla_{{\bf h}^*} J(n)$ is calculated by
\begin{equation}\label{eq:iqlmsdJdwconj}
\frac{\partial J(n)}{\partial {\bf h}^*}=e^*(n)\frac{\partial e(n)}{\partial {\bf h}^*}+\frac{\partial e^*(n)}{\partial {\bf h}^{e(n)*}}e(n)
\end{equation}
To find $\nabla_{{\bf h}^*} J(n)$, we need to calculate the following two derivatives
\begin{equation}\label{eq:iqlmsdedwconj}
\frac{\partial e(n)}{\partial {\bf h}^*}=-\frac{\partial \left({\bf h}^H(n){\bf x}(n)\right)}{\partial {\bf h}^*}=-\mathfrak{R}({\bf x}^T(n))
\end{equation}
\begin{equation}\label{eq:iqlmsdeconjdwconj}
\frac{\partial e^*(n)}{\partial {\bf h}^{e(n)*}}e(n)=-\frac{\partial \left({\bf x}^H(n){\bf h}(n)\right)}{\partial {\bf h}^{e(n)*}}e(n)=\frac{1}{2}{\bf x}^H(n)e^*(n)
\end{equation}
where the terms $\frac{\partial (q^*\nu)}{\partial q^*}$ and $\frac{\partial (\omega q)}{\partial q^{\mu*}}\mu$ are given in Table \ref{tb:fmlleftderiv},
and are used in the last equalities above. Substituting \eqref{eq:iqlmsdedwconj} and \eqref{eq:iqlmsdeconjdwconj} into \eqref{eq:iqlmsdJdwconj} yields
\begin{equation}\label{eq:iqlmsdJdwconj1b}
\begin{split}
\nabla_{{\bf h}^*} J(n)&=\left(\frac{\partial J(n)}{\partial {\bf h}^*}\right)^T=-e^*(n)\mathfrak{R}({\bf x}(n))+\frac{1}{2}{\bf x}^*(n)e^*(n)\\
&=\left(-\mathfrak{R}({\bf x}(n))+\frac{1}{2}{\bf x}^*(n)\right)e^*(n)=-\frac{1}{2}{\bf x}(n)e^*(n)
\end{split}
\end{equation}
The gradients $\nabla_{{\bf g}^*} J(n)$, $\nabla_{{\bf u}^*} J(n)$ and $\nabla_{{\bf v}^*} J(n)$  can be calculated in a similar way to \eqref{eq:iqlmsdJdwconj1b} and are given by
\begin{equation}\label{eq:nwwlqlmsdJdwconj1c}
\begin{split}
&\nabla_{{\bf g}^*} J(n)=-\frac{1}{2}{\bf x}^i(n)e^*(n),\quad \nabla_{{\bf v}^*} J(n)=-\frac{1}{2}{\bf x}^k(n)e^*(n)\\
&\nabla_{{\bf u}^*} J(n)=-\frac{1}{2}{\bf x}^j(n)e^*(n)
\end{split}
\end{equation}
Finally, the weight update within WL-QLMS can be expressed as
\begin{equation}\label{eq:nwwlqlmsllearnruleww}
\begin{split}
&{\bf h}(n+1)={\bf h}(n)+\alpha\, {\bf x}(n)e^*(n),\quad {\bf u}(n+1)={\bf u}(n)+\alpha\, {\bf x}^j(n)e^*(n)\\
&{\bf g}(n+1)={\bf g}(n)+\alpha\,{\bf x}^i(n)e^*(n),\quad {\bf v}(n+1)={\bf v}(n)+\alpha\,{\bf x}^k(n)e^*(n)
\end{split}
\end{equation}
where the constant $\frac{1}{2}$ in \eqref{eq:nwwlqlmsdJdwconj1c} is absorbed into $\alpha$.
\begin{remark}
There are many variations of WL-QLMS algorithms, such as the WL-QLMS algorithms based on variants $\{q^*,q^{i*},q^{j*},q^{k*}\}$, $\{q^{\mu},q^{\mu i},q^{\mu j},q^{\mu k}\}$ and $\{q^*,q^{\mu i*},q^{\mu j*},q^{\mu k*}\}$, $\mu\in \mathbb{H}$. If the Hermitian operator in \eqref{eq:nwwlqlmsoutput} is replaced with the transpose operator, another variant of WL-QLMS algorithm can be found in \cite{Ujang13,Jahanchahi10}.
\end{remark}

\subsection{Derivation of Quaternion Nonlinear Adaptive Filtering Algorithm.}
In this section, we derive the Quaternion nonlinear gradient descent (QNGD) algorithm given in \cite{Ujang11} according to the rules of GHR calculus. The same real-valued quadratic cost function as in LMS and CLMS is used, that is
\begin{equation}
J(n)=|e(n)|^2
\end{equation}
where
\begin{equation}
e(n)=d(n)-\Phi(s(n)),\quad s(n)={\bf w}^T(n){\bf x}(n)
\end{equation}
and $\Phi$ is the quaternion nonlinearity.  The weight update is given by
\begin{equation}\label{eq:nlllearnrule}
{\bf w}(n+1)={\bf w}(n)-\alpha \nabla_{{\bf w}^*} J(n)={\bf w}(n)-\alpha\left(\frac{\partial J(n)}{\partial {\bf w}^*}\right)^T
\end{equation}
where $\alpha$ is the step size, and $\nabla_{{\bf w}^*} J(n)$ is the conjugate gradient of $J(n)$ with respect to ${\bf w}^*$. By using the chain rule in Corollary \ref{cor:leftchainrl}, the gradient is calculated by
\begin{equation}\label{eq:nldJdwconj}
\frac{\partial J(n)}{\partial {\bf w}^*}
=\sum_{\mu\in\{1,i,j,k\}}\frac{\partial |e(n)|^2}{\partial e^{\mu*}(n)}\frac{\partial e^{\mu*}(n)}{\partial {\bf w}^*}
\end{equation}
where the derivatives of $|e(n)|^2$ can be calculated using the term $\frac{\partial |q|^2}{\partial q^{\mu*}}\mu$ in Table \ref{tb:fmlleftderiv}. It then follows that
\begin{equation}\label{eq:nldme2de}
\frac{\partial |e(n)|^2}{\partial e^{\mu*}(n)}=\frac{1}{2}e^{\mu}(n),\quad \forall \mu\in\{1,i,j,k\}
\end{equation}
To find $\nabla_{{\bf w}^*} J(n)$, we need to calculate the following derivative
\begin{equation}\label{eq:nldedwconj}
\begin{split}
\frac{\partial e^{\mu*}(n)}{\partial {\bf w}^*}&=\frac{\partial \left(d^{\mu*}(n)-y^{\mu*}(n)\right)}{\partial {\bf w}^*}
=-\frac{\partial y^{\mu*}(n)}{\partial {\bf w}^*}\\
&=-\frac{\partial \Phi^{\mu*}(s(n))}{\partial {\bf w}^*},\quad \forall \mu\in\{1,i,j,k\}
\end{split}
\end{equation}
By the chain rule in Corollary \ref{cor:leftchainrl}, it follows that
\begin{equation}\label{eq:nldphisdwconj}
\begin{split}
&\frac{\partial \Phi^{\mu*}(s(n))}{\partial {\bf w}^*}
=\sum_{\nu\in\{1,i,j,k\}}\frac{\partial \Phi^{\mu*}(s(n))}{\partial s^{\nu*}(n)}\frac{\partial s^{\nu*}(n)}{\partial {\bf w}^*}
\end{split}
\end{equation}
where the HR derivatives of $s^*(n)$ can be calculated by using the term $\frac{\partial (\omega q^*)}{\partial q^{*}}$ in Table \ref{tb:fmlleftderiv}, giving
\begin{equation}\label{eq:nldsconjdwconj}
\frac{\partial s^*(n)}{\partial {\bf w}^*}=\frac{\partial \left({\bf x}^H(n){\bf w}^*(n)\right)}{\partial {\bf w}^*}={\bf x}^H(n)
\end{equation}
Using the property \eqref{pr:lefthrpr2} and the term $\frac{\partial (\omega q^*)}{\partial q^{\mu*}}\mu$ in Table \ref{tb:fmlleftderiv}, we have
\begin{equation}\label{eq:nldsiconjdwconj}
\begin{split}
\frac{\partial s^{\nu*}(n)}{\partial {\bf w}^*}&=\left(\frac{\partial s^*(n)}{\partial {\bf w}^{\nu*}}\right)^{\nu}=-\nu\frac{\partial \left({\bf x}^H(n){\bf w}^*(n)\right)}{\partial {\bf w}^{\nu*}}\nu\\
&=-\nu{\bf x}^H(n)\mathfrak{R}(\nu)=0,\quad \forall \nu\in\{i,j,k\}
\end{split}
\end{equation}
Substituting \eqref{eq:nldsconjdwconj} and \eqref{eq:nldsiconjdwconj} into \eqref{eq:nldphisdwconj}, we arrive at
\begin{equation}\label{eq:nldphisdwconjres}
\begin{split}
\frac{\partial \Phi^{\mu*}(s(n))}{\partial {\bf w}^*}&=\frac{\partial \Phi^{\mu*}(s(n))}{\partial s^*(n)}{\bf x}^H(n),\quad \forall \mu\in\{1,i,j,k\}
\end{split}
\end{equation}
By combining \eqref{eq:nldme2de}, \eqref{eq:nldedwconj} and \eqref{eq:nldphisdwconjres} with \eqref{eq:nldJdwconj}, we obtain
\begin{equation}\label{eq:nldJdwconjres}
\nabla_{{\bf w}^*} J(n)=\left(\frac{\partial J(n)}{\partial {\bf w}^*}\right)^T=-\frac{1}{2}\sum_{\mu\in\{1,i,j,k\}}e^{\mu}(n)\frac{\partial \Phi^{\mu*}(s(n))}{\partial s^*(n)}{\bf x}^*
\end{equation}
Finally, the update of the adaptive weight vector of QNGD algorithm can be expressed as
\begin{equation}\label{eq:nonlinearllearnrule1b}
{\bf w}(n+1)={\bf w}(n)+\alpha\sum_{\mu\in\{1,i,j,k\}}e^{\mu}(n)\frac{\partial \Phi^{\mu*}(s(n))}{\partial s^*(n)}{\bf x}^*
\end{equation}
where the constant $\frac{1}{2}$ in \eqref{eq:nldJdwconjres} is absorbed into $\alpha$.
\begin{remark}
If the function $\Phi(q)=q$, then the QNGD algorithm will degenerate into QLMS in Section \ref{sec:qlms1}, that is, the QLMS algorithm is a special case of QNGD. Using the GHR calculus for the QNGD algorithm, the nonlinear function $\Phi$ does not need to satisfy the odd-symmetry condition required in \cite{Ujang11}. We can also derive the augmented QNGD (AQNGD) and widely linear QNGD (WL-QNGD) algorithms in the same way. In order to save space, we leave this to the interested readers.
\end{remark}

\section{Conclusions}
A new framework for the efficient
computation of quaternion derivatives, termed the GHR calculus, has been established.
The proposed methodology has been shown to greatly relax the existence condition for the derivatives of functions of quaternion variables,
and to simplify the calculation of quaternion derivatives through its novel product and chain rule, unlike the existing quaternion derivatives, the GHR calculus is general and can be used for both analytic and non-analytic functions of quaternion variables,
The core of the GHR calculus is the use of quaternion rotation to overcome the non-commutativity of quaternion product,
and the use of quaternion involutions to obtain a quaternion basis, such as $\{q,q^i,q^j,q^k\}$ or their conjugates.
The use of quaternion involutions has been instrumental in establishing two fundamental results: the quaternion mean value theorem and Taylor's theorem.
The proposed framework allows for real- and complex-valued optimization algorithms to be extended to the quaternion field
in a generic, compact and intuitive way. Illustrative examples in adaptive signal processing demonstrate the effectiveness of the proposed framework.

\section*{Appendix.}
In this section, we give the detail of proofs of theorems and combine some important results into a table to make them
more accessible.
\subsection*{Appendix A: The derivation of HR calculus.}
For any quaternion-valued function $f(q)\in \mathbb{H}$, we shall start
from (since the fields $\mathbb{H}$ and $\mathbb{R}^4$ are isomorphic)
\begin{equation}
\begin{split}
f(q)&=f_a(q_a,q_b,q_c,q_d)+if_b(q_a,q_b,q_c,q_d)+jf_c(q_a,q_b,q_c,q_d)+kf_d(q_a,q_b,q_c,q_d)
\end{split}
\end{equation}
where $f_a(\cdot),f_b(\cdot),f_c(\cdot),f_d(\cdot)\in \mathbb{R}$. Then, the function $f$ can be seen as a function of the four independent real-valued variables $q_a,q_b,q_c$ and $q_d$, and the differential of $f$ can be expressed as follows \cite{Sudbery}:
\begin{equation}\label{eq:case1df}
df=\frac{\partial f}{\partial q_a}dq_a+\frac{\partial f}{\partial q_b}dq_b+\frac{\partial f}{\partial q_c}dq_c+\frac{\partial f}{\partial q_d}dq_d
\end{equation}
or
\begin{equation}\label{eq:case2df}
df=dq_a\frac{\partial f}{\partial q_a}+dq_b\frac{\partial f}{\partial q_b}+dq_c\frac{\partial f}{\partial q_c}+dq_d\frac{\partial f}{\partial q_d}
\end{equation}
where $\frac{\partial f}{\partial q_a}$, $\frac{\partial f}{\partial q_b}$, $\frac{\partial f}{\partial q_c}$ and $\frac{\partial f}{\partial q_d}$ are the partial derivatives of $f$ with respect to $q_a$, $q_b$, $q_c$ and $q_d$, respectively. Note that the two equations are identical since $dq_a$, $dq_b$, $dq_c$ and $dq_d$ are real quantities. As a result, both equations are equally valid as a starting point for the derivation of the HR calculus.

\textit{The Left Case \eqref{eq:case1df}.}
There are two ways \eqref{eq:z1qlink} and \eqref{eq:z1conjqlink}  to link the real and quaternion differentials, which correspond to the HR derivatives and conjugate ${\rm HR}$ derivatives, respectively.
\textit{A1. The Left HR Derivatives}: From \eqref{eq:z1qlink}, the differentials of the components of a quaternion can be formulated as
\begin{equation}\label{eq:cas1dzdqlink}
\begin{split}
&dq_a=\frac{1}{4}(dq+dq^i+dq^j+dq^k),\quad  dq_b=-\frac{i}{4}(dq+dq^i-dq^j-dq^k)\\
&dq_c=-\frac{j}{4}(dq-dq^i+dq^j-dq^k),\quad  dq_d=-\frac{k}{4}(dq-dq^i-dq^j+dq^k)
\end{split}
\end{equation}
By inserting \eqref{eq:cas1dzdqlink} into \eqref{eq:case1df}, the differential of the function $f$ becomes
\begin{equation}\label{eq:case1dfdq}
\begin{split}
df&=\frac{1}{4}\frac{\partial f}{\partial q_a}(dq+dq^i+dq^j+dq^k)-\frac{1}{4}\frac{\partial f}{\partial q_b}i(dq+dq^i-dq^j-dq^k)\\
&\quad-\frac{1}{4}\frac{\partial f}{\partial q_b}j(dq-dq^i+dq^j-dq^k)-\frac{1}{4}\frac{\partial f}{\partial q_b}k(dq-dq^i-dq^j+dq^k)\\
&=\frac{1}{4}\left(\frac{\partial f}{\partial q_a}-\frac{\partial f}{\partial q_b}i-\frac{\partial f}{\partial q_c}j-\frac{\partial f}{\partial q_d}k\right)dq+\frac{1}{4}\left(\frac{\partial f}{\partial q_a}-\frac{\partial f}{\partial q_b}i+\frac{\partial f}{\partial q_c}j+\frac{\partial f}{\partial q_d}k\right)dq^i\\
&\quad+\frac{1}{4}\left(\frac{\partial f}{\partial q_a}+\frac{\partial f}{\partial q_b}i-\frac{\partial f}{\partial q_c}j+\frac{\partial f}{\partial q_d}k\right)dq^j+\frac{1}{4}\left(\frac{\partial f}{\partial q_a}+\frac{\partial f}{\partial q_b}i+\frac{\partial f}{\partial q_c}j-\frac{\partial f}{\partial q_d}k\right)dq^k\\
\end{split}
\end{equation}
Now, define the formal left derivatives $\frac{\partial f}{\partial q},\frac{\partial f}{\partial q^i},\frac{\partial f}{\partial q^j}$ and $\frac{\partial f}{\partial q^k}$ so that
\begin{equation}\label{eq:case1dfdqcmp}
df=\frac{\partial f}{\partial q}dq+\frac{\partial f}{\partial q^i}dq^i+\frac{\partial f}{\partial q^j}dq^j+\frac{\partial f}{\partial q^k}dq^k
\end{equation}
holds. Comparing \eqref{eq:case1dfdqcmp} with \eqref{eq:case1dfdq} and applying Lemma \ref{lm:dpindp}, we can obtain the following left HR derivatives
\begin{equation}
\left(\begin{array}{l}
\frac{\partial f}{\partial q}\\
\frac{\partial f}{\partial q^i}\\
\frac{\partial f}{\partial q^j}\\
\frac{\partial f}{\partial q^k}
\end{array}\right)^T=\frac{1}{4}
\left(\begin{array}{l}
\frac{\partial f}{\partial q_a}\\
\frac{\partial f}{\partial q_b}\\
\frac{\partial f}{\partial q_c}\\
\frac{\partial f}{\partial q_d}
\end{array}\right)^T
\left(\begin{array}{cccc}
1 & 1  & 1 & 1 \\
-i & -i & i  & i\\
-j & j  & -j & j\\
-k & k  & k  & -k\\
\end{array}\right)
\end{equation}
\textit{A2. The Left Conjugate $HR$ Derivatives}: From \eqref{eq:z1conjqlink}, the differentials of the components of a quaternion can be formulated as follows
\begin{equation}\label{eq:case1bdzdqlink}
\begin{split}
&dq_a=\frac{1}{4}(dq^*+dq^{i*}+dq^{j*}+dq^{k*}),\quad dq_b=\frac{i}{4}(dq^*+dq^{i*}-dq^{j*}-dq^{k*})\\
&dq_c=\frac{j}{4}(dq^*-dq^{i*}+dq^{j*}-dq^{k*}),\quad dq_d=\frac{k}{4}(dq^*+dq^{i*}+dq^{j*}+dq^{k*})
\end{split}
\end{equation}
By inserting \eqref{eq:case1bdzdqlink} into \eqref{eq:case1df}, the differential of $f$ can be written as
\begin{equation}\label{eq:case1bdfdq}
\begin{split}
df&=\frac{1}{4}\frac{\partial f}{\partial q_a}(dq^*+dq^{i*}+dq^{j*}+dq^{k*})+\frac{1}{4}\frac{\partial f}{\partial q_b}i(dq^*+dq^{i*}-dq^{j*}-dq^{k*})\\
&\quad+\frac{1}{4}\frac{\partial f}{\partial q_b}j(dq^*-dq^{i*}+dq^{j*}-dq^{k*})+\frac{1}{4}\frac{\partial f}{\partial q_b}k(dq^*-dq^{i*}-dq^{j*}+dq^{k*})\\
&=\frac{1}{4}\left(\frac{\partial f}{\partial q_a}+\frac{\partial f}{\partial q_b}i+\frac{\partial f}{\partial q_c}j+\frac{\partial f}{\partial q_d}k\right)dq^*+\frac{1}{4}\left(\frac{\partial f}{\partial q_a}+\frac{\partial f}{\partial q_b}i-\frac{\partial f}{\partial q_c}j-\frac{\partial f}{\partial q_d}k\right)dq^{i*}\\
&\quad+\frac{1}{4}\left(\frac{\partial f}{\partial q_a}-\frac{\partial f}{\partial q_b}i+\frac{\partial f}{\partial q_c}j-\frac{\partial f}{\partial q_d}k\right)dq^{j*}+\frac{1}{4}\left(\frac{\partial f}{\partial q_a}-\frac{\partial f}{\partial q_b}i-\frac{\partial f}{\partial q_c}j+\frac{\partial f}{\partial q_d}k\right)dq^{k*}\\
\end{split}
\end{equation}
Now, we define the formal left derivatives $\frac{\partial f}{\partial q^*},\frac{\partial f}{\partial q^{i*}},\frac{\partial f}{\partial q^{j*}}$ and $\frac{\partial f}{\partial q^{k*}}$ so that
\begin{equation}\label{eq:case1bdfdqcmp}
df=\frac{\partial f}{\partial q^*}dq^*+\frac{\partial f}{\partial q^{i*}}dq^{i*}+\frac{\partial f}{\partial q^{j*}}dq^{j*}+\frac{\partial f}{\partial q^k}dq^{k*}
\end{equation}
holds. Comparing \eqref{eq:case1bdfdqcmp} with \eqref{eq:case1bdfdq} and applying Lemma \ref{lm:dpconjindp}, we can obtain the following left conjugate ${\rm HR}$ derivatives
\begin{equation}
\left(\begin{array}{l}
\frac{\partial f}{\partial q^*}\\
\frac{\partial f}{\partial q^{i*}}\\
\frac{\partial f}{\partial q^{j*}}\\
\frac{\partial f}{\partial q^{k*}}
\end{array}\right)^T=\frac{1}{4}
\left(\begin{array}{l}
\frac{\partial f}{\partial q_a}\\
\frac{\partial f}{\partial q_b}\\
\frac{\partial f}{\partial q_c}\\
\frac{\partial f}{\partial q_d}
\end{array}\right)^T
\left(\begin{array}{cccc}
1 & 1  & 1 & 1 \\
i & i & -i  & -i\\
j & -j  & j & -j\\
k & -k  & -k  & k\\
\end{array}\right)
\end{equation}

\textit{The Right Case \eqref{eq:case2df}.}
There are two ways \eqref{eq:z1qlink} and \eqref{eq:z1conjqlink}  to link the real and quaternion differentials, which correspond the HR derivatives and conjugate ${\rm HR}$ derivatives, respectively.

\textit{A3. The right HR Derivatives}: By applying a rotation transformation on both sides of \eqref{eq:cas1dzdqlink}, we have
\begin{equation}\label{eq:cas2dzdqlink}
\begin{split}
&dq_a=\frac{1}{4}(dq+dq^i+dq^j+dq^k),\quad dq_b=(dq_b)^i=-\frac{1}{4}(dq+dq^i-dq^j-dq^k)i\\
&dq_c=-\frac{1}{4}(dq-dq^i+dq^j-dq^k)j,\quad dq_d=(dq_d)^k=-\frac{1}{4}(dq-dq^i-dq^j+dq^k)k
\end{split}
\end{equation}
Then, by substituting \eqref{eq:cas2dzdqlink} into \eqref{eq:case2df}, the differential of $f$ becomes
\begin{equation}\label{eq:case2dfdq}
\begin{split}
&df=\frac{1}{4}(dq+dq^i+dq^j+dq^k)\frac{\partial f}{\partial q_a}-\frac{1}{4}(dq+dq^i-dq^j-dq^k)i\frac{\partial f}{\partial q_b}\\
&\quad-\frac{1}{4}(dq-dq^i+dq^j-dq^k)j\frac{\partial f}{\partial q_b}-\frac{1}{4}(dq-dq^i-dq^j+dq^k)k\frac{\partial f}{\partial q_b}\\
&=\frac{1}{4}dq\left(\frac{\partial f}{\partial q_a}-i\frac{\partial f}{\partial q_b}-j\frac{\partial f}{\partial q_c}-k\frac{\partial f}{\partial q_d}\right)+\frac{1}{4}dq^i\left(\frac{\partial f}{\partial q_a}-i\frac{\partial f}{\partial q_b}+j\frac{\partial f}{\partial q_c}+k\frac{\partial f}{\partial q_d}\right)\\
&\quad+\frac{1}{4}dq^j\left(\frac{\partial f}{\partial q_a}+i\frac{\partial f}{\partial q_b}-j\frac{\partial f}{\partial q_c}+k\frac{\partial f}{\partial q_d}\right)+\frac{1}{4}dq^k\left(\frac{\partial f}{\partial q_a}+i\frac{\partial f}{\partial q_b}+j\frac{\partial f}{\partial q_c}-k\frac{\partial f}{\partial q_d}\right)\\
\end{split}
\end{equation}
Now, define the formal right derivatives $\frac{\partial_r f}{\partial q},\frac{\partial_r f}{\partial q^i},\frac{\partial_r f}{\partial q^j}$ and $\frac{\partial_r f}{\partial q^k}$ so that
\begin{equation}\label{eq:case2dfdqcmp}
df=dq\frac{\partial_r f}{\partial q}+dq^i\frac{\partial_r f}{\partial q^i}+dq^j\frac{\partial_r f}{\partial q^j}+dq^k\frac{\partial_r f}{\partial q^k}
\end{equation}
holds. Comparing \eqref{eq:case2dfdqcmp} and \eqref{eq:case2dfdq} and using Lemma \ref{lm:dpindp}, we obtain the following right HR derivatives
\begin{equation}
\left(\begin{array}{l}
\frac{\partial_r f}{\partial q}\\
\frac{\partial_r f}{\partial q^i}\\
\frac{\partial_r f}{\partial q^j}\\
\frac{\partial_r f}{\partial q^k}
\end{array}\right)=\frac{1}{4}
\left(\begin{array}{cccc}
1 & -i  & -j & -k \\
1 & -i & j  & k\\
1 & i  & -j & k\\
1 & i  & j  & -k\\
\end{array}\right)
\left(\begin{array}{l}
\frac{\partial f}{\partial q_a}\\
\frac{\partial f}{\partial q_b}\\
\frac{\partial f}{\partial q_c}\\
\frac{\partial f}{\partial q_d}
\end{array}\right)
\end{equation}

\textit{A4. The Right Conjugate $HR$ Derivatives}:
By applying a rotation transformation on both sides of \eqref{eq:case1bdzdqlink}, we have
\begin{equation}\label{eq:case2bdzdqlink}
\begin{split}
&dq_a=\frac{1}{4}(dq^*+dq^{i*}+dq^{j*}+dq^{k*}),\quad dq_b=(dq_b)^i=\frac{1}{4}(dq^*+dq^{i*}-dq^{j*}-dq^{k*})i\\
&dq_c=\frac{1}{4}(dq^*-dq^{i*}+dq^{j*}-dq^{k*})j,\quad dq_d=(dq_d)^k=\frac{1}{4}(dq^*-dq^{i*}-dq^{j*}+dq^{k*})k
\end{split}
\end{equation}
By substituting \eqref{eq:case2bdzdqlink} into \eqref{eq:case2df}, the differential of $f$ can be written as
\begin{equation}\label{eq:case2bdfdq}
\begin{split}
df&=\frac{1}{4}(dq^*+dq^{i*}+dq^{j*}+dq^{k*})\frac{\partial f}{\partial q_a}+\frac{1}{4}(dq^*+dq^{i*}-dq^{j*}-dq^{k*})i\frac{\partial f}{\partial q_b}\\
&\quad+\frac{1}{4}(dq^*-dq^{i*}+dq^{j*}-dq^{k*})j\frac{\partial f}{\partial q_b}+\frac{1}{4}(dq^*-dq^{i*}-dq^{j*}+dq^{k*})k\frac{\partial f}{\partial q_b}\\
&=\frac{1}{4}dq^*\left(\frac{\partial f}{\partial q_a}+i\frac{\partial f}{\partial q_b}+j\frac{\partial f}{\partial q_c}+k\frac{\partial f}{\partial q_d}\right)+\frac{1}{4}dq^{i*}\left(\frac{\partial f}{\partial q_a}+i\frac{\partial f}{\partial q_b}-j\frac{\partial f}{\partial q_c}-k\frac{\partial f}{\partial q_d}\right)\\
&\quad+\frac{1}{4}dq^{j*}\left(\frac{\partial f}{\partial q_a}-i\frac{\partial f}{\partial q_b}+j\frac{\partial f}{\partial q_c}-k\frac{\partial f}{\partial q_d}\right)+\frac{1}{4}dq^{k*}\left(\frac{\partial f}{\partial q_a}-i\frac{\partial f}{\partial q_b}-j\frac{\partial f}{\partial q_c}+k\frac{\partial f}{\partial q_d}\right)\\
\end{split}
\end{equation}
Now, define the formal right derivatives $\frac{\partial_r f}{\partial q^*},\frac{\partial_r f}{\partial q^{i*}},\frac{\partial_r f}{\partial q^{j*}}$ and $\frac{\partial_r f}{\partial q^{k*}}$ so that
\begin{equation}\label{eq:case2bdfdqcmp}
df=dq^*\frac{\partial_r f}{\partial q^*}+dq^{i*}\frac{\partial_r f}{\partial q^{i*}}+dq^{j*}\frac{\partial_r f}{\partial q^{j*}}+dq^{k*}\frac{\partial_r f}{\partial q^{k*}}
\end{equation}
holds. Comparing \eqref{eq:case2bdfdqcmp} with \eqref{eq:case2bdfdq} and using Lemma \ref{lm:dpconjindp}, we can obtain the following right conjugate ${\rm HR}$ derivatives

\begin{equation}
\left(\begin{array}{l}
\frac{\partial_r f}{\partial q^*}\\
\frac{\partial_r f}{\partial q^{i*}}\\
\frac{\partial_r f}{\partial q^{j*}}\\
\frac{\partial_r f}{\partial q^{k*}}
\end{array}\right)=\frac{1}{4}
\left(\begin{array}{cccc}
1 & i  & j & k \\
1 & i & -j  & -k\\
1 & -i  & j & -k\\
1 & -i  & -j  & k\\
\end{array}\right)
\left(\begin{array}{l}
\frac{\partial f}{\partial q_a}\\
\frac{\partial f}{\partial q_b}\\
\frac{\partial f}{\partial q_c}\\
\frac{\partial f}{\partial q_d}
\end{array}\right)
\end{equation}

\subsection*{Appendix B: The Proof of The Product Rule.}
Within the GHR calculus, when a quaternion-valued function is postmultiplied by a real-valued function, then the novel product rule will degenerate into the traditional product rule. This is stated in the next lemma.
\begin{lemma}\label{lem:qvfprodrl}
If the functions $f:\mathbb{H}\rightarrow \mathbb{H}$ and $g:\mathbb{H}\rightarrow \mathbb{R}$ have the left GHR derivatives, then
their product $fg$ satisfies the traditional product rule
\begin{equation*}
\frac{\partial (fg)}{\partial q^{\mu}}=f\frac{\partial g}{\partial q^{\mu}}+\frac{\partial f}{\partial q^{\mu}}g,\quad \frac{\partial (fg)}{\partial q^{\mu*}}=f\frac{\partial g}{\partial q^{\mu*}}+\frac{\partial f}{\partial q^{\mu*}}g
\end{equation*}
where $\frac{\partial f}{\partial q^{\mu}}$ and $\frac{\partial f}{\partial q^{\mu*}}$ are the left GHR derivatives in Definition \ref{def:leftghr}.
\end{lemma}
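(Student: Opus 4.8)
The plan is to compute directly from Definition \ref{def:leftghr}, exploiting the single structural fact that $g$ is real-valued: its real partial derivatives are ordinary scalars, and as scalars they commute with the rotated imaginary units $i^{\mu},j^{\mu},k^{\mu}$. Since $q_a,q_b,q_c,q_d$ are genuine independent real variables, the classical product rule for real partial derivatives applies termwise, so that $\frac{\partial (fg)}{\partial q_\xi}=\frac{\partial f}{\partial q_\xi}g+f\frac{\partial g}{\partial q_\xi}$ for each $\xi\in\{a,b,c,d\}$, with no ordering ambiguity because the derivatives are taken with respect to real scalars.

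First I would substitute these four expansions into the definition
$$\frac{\partial (fg)}{\partial q^{\mu}}=\frac{1}{4}\left(\frac{\partial (fg)}{\partial q_a}-\frac{\partial (fg)}{\partial q_b}i^{\mu}-\frac{\partial (fg)}{\partial q_c}j^{\mu}-\frac{\partial (fg)}{\partial q_d}k^{\mu}\right),$$
producing eight terms, and then sort them into a \emph{$g$-group} (the four terms carrying the undifferentiated factor $f$) and an \emph{$f$-group} (the four terms carrying the undifferentiated factor $g$). In the $g$-group every term already has $f$ standing on the left, so $f$ factors out cleanly, and the remaining bracket is precisely $\frac{\partial g}{\partial q^{\mu}}$ by Definition \ref{def:leftghr}, giving the summand $f\frac{\partial g}{\partial q^{\mu}}$. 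The crucial step is the $f$-group: each of its terms has the form $\frac{\partial f}{\partial q_\xi}\,g$ multiplied on the right by one of $1,i^{\mu},j^{\mu},k^{\mu}$, and because $g$ is real it commutes with each of these units; hence $g$ can be transported to the far right of every term, after which the four terms reassemble into $\frac{\partial f}{\partial q^{\mu}}$ with $g$ trailing on the right. Summing the two groups yields $\frac{\partial (fg)}{\partial q^{\mu}}=f\frac{\partial g}{\partial q^{\mu}}+\frac{\partial f}{\partial q^{\mu}}g$, and the conjugate identity follows from the identical computation with each minus sign in the definition replaced by a plus, the commutation of the real scalar $g$ being wholly insensitive to those signs.

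There is no deep obstacle here; the one point demanding care is the asymmetry in how the two factors are extracted. The factor $f$ comes out on the left automatically because differentiation acts on $g$ and leaves $f$ untouched on the left, whereas $g$ can be pulled through the rotated units to the right \emph{only} because it is real-valued. This is exactly the commutativity that fails when $g$ is quaternion-valued, in which case moving $g$ past $i^{\mu},j^{\mu},k^{\mu}$ instead rotates the basis and converts $\frac{\partial f}{\partial q^{\mu}}$ into $\frac{\partial f}{\partial q^{g\mu}}$, recovering the general novel product rule of Theorem \ref{thm:nwpleftpdrl}. Tracking this single commutation step correctly is therefore the whole content of the proof.
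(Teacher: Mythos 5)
Your proof is correct, and it takes a genuinely different route from the paper's. The paper proves this lemma by decomposing the quaternion-valued factor, $f=f_a+if_b+jf_c+kf_d$, writing $fg=f_ag+i(f_bg)+j(f_cg)+k(f_dg)$, pulling the constants $1,i,j,k$ out on the left via the left-constant property \eqref{pr:leftghrpr1}, applying the scalar product rule to each product $f_\xi g$ of two real-valued functions, and then regrouping the eight resulting terms into $f\frac{\partial g}{\partial q^{\mu}}+\frac{\partial f}{\partial q^{\mu}}g$. You instead keep $f$ intact and work directly from Definition \ref{def:leftghr}: you invoke the order-preserving product rule for real partial derivatives of algebra-valued functions, $\frac{\partial (fg)}{\partial q_\xi}=\frac{\partial f}{\partial q_\xi}g+f\frac{\partial g}{\partial q_\xi}$, and then make the single essential commutation $g\,i^{\mu}=i^{\mu}g$ (valid because $g$ is real) explicit inside the definition so that $g$ can be transported to the right of the rotated units. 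Both arguments ultimately rest on the same fact — realness of $g$ is what permits the commutation — but your version is more self-contained (it does not need \eqref{pr:leftghrpr1} or the component decomposition of $f$) and it isolates exactly the step that fails for quaternion-valued $g$: as you note, pushing a quaternion $g$ past $i^{\mu}$ rotates the basis, $g\,i^{\mu}=i^{g\mu}g$, which is precisely how the term $\frac{\partial f}{\partial q^{g\mu}}g$ in Theorem \ref{thm:nwpleftpdrl} arises. The paper's version, in exchange, keeps every manipulation at the level of scalar real calculus plus previously established GHR properties, so it never needs to justify the real-variable product rule for noncommutative-algebra-valued functions (a standard but unstated fact in your write-up).
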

\begin{proof}
Let $f=f_a+if_b+jf_c+kf_d$, where $f_a,f_b,f_c,f_d\in \mathbb{R}$, then
\begin{equation}
fg=f_ag+if_bg+jf_cg+kf_dg, \quad g\in \mathbb{R}
\end{equation}
Using the property $\frac{\partial (\eta f)}{\partial q^{\mu}}=\eta\frac{\partial f}{\partial q^{\mu}}$ in \eqref{pr:leftghrpr1}, we have
\begin{equation}
\begin{split}
&\frac{\partial (fg)}{\partial q^{\mu}}=\frac{\partial (f_ag)}{\partial q^{\mu}}+i\frac{\partial (f_bg)}{\partial q^{\mu}}+j\frac{\partial (f_cg)}{\partial q^{\mu}}+k\frac{\partial (f_dg)}{\partial q^{\mu}}\\
&=f_a\frac{\partial g}{\partial q^{\mu}}+if_b\frac{\partial g}{\partial q^{\mu}}+jf_c\frac{\partial g}{\partial q^{\mu}}+kf_d\frac{\partial g}{\partial q^{\mu}}+\frac{\partial f_a}{\partial q^{\mu}}g+i\frac{\partial f_b}{\partial q^{\mu}}g+j\frac{\partial f_c}{\partial q^{\mu}}g+k\frac{\partial f_d}{\partial q^{\mu}}g\\
&=f\frac{\partial g}{\partial q^{\mu}}+\left(\frac{\partial f_a}{\partial q^{\mu}}+i\frac{\partial f_b}{\partial q^{\mu}}+j\frac{\partial f_c}{\partial q^{\mu}}+k\frac{\partial f_d}{\partial q^{\mu}}\right)g\\
&=f\frac{\partial g}{\partial q^{\mu}}+\frac{\partial f}{\partial q^{\mu}}g
\end{split}
\end{equation}
The first part of the lemma is proved, and the second part can be shown in a similar way.
\end{proof}

\begin{proof*}[of Theorem~{\rm\ref{thm:nwpleftpdrl}}]
From \eqref{pr:prodijkmurl}, it is seen that $\{1,i^{\mu},j^{\mu},k^{\mu}\}$ is another orthogonal basis of $\mathbb{H}$. Then the quaternion-valued function $g$ can be expressed in the following way
\begin{equation}
 g=g_a+i^{\mu}g_b+j^{\mu}g_c+k^{\mu}g_d,\quad  g_a,g_b,g_c,g_d\in \mathbb{R}
\end{equation}
and $fg=fg_a+fi^{\mu}g_b+fj^{\mu}g_c+fk^{\mu}g_d$, $f\in\mathbb{H}$. Using the sum rule, it follows
\begin{equation}\label{eq:dfgdijkmu}
\frac{\partial (fg)}{\partial q^{\mu}}=\frac{\partial (fg_a)}{\partial q^{\mu}}+\frac{\partial (fi^{\mu}g_b)}{\partial q^{\mu}}+\frac{\partial (fi^{\mu}g_c)}{\partial q^{\mu}}+\frac{\partial (fi^{\mu}g_d)}{\partial q^{\mu}}
\end{equation}
By applying Lemma \ref{lem:qvfprodrl} to the right side of \eqref{eq:dfgdijkmu}, it can be shown that
\begin{equation}
\begin{split}
&\frac{\partial (fg)}{\partial q^{\mu}}=f\frac{\partial g_a}{\partial q^{\mu}}+fi^{\mu}\frac{\partial g_b}{\partial q^{\mu}}+fj^{\mu}\frac{\partial g_c}{\partial q^{\mu}}+fk^{\mu}\frac{\partial g_d}{\partial q^{\mu}}+\frac{\partial f}{\partial q^{\mu}}g_a+\frac{\partial (f{i^{\mu}})}{\partial q^{\mu}}g_b+\frac{\partial (f{j^{\mu}})}{\partial q^{\mu}}g_c+\frac{\partial (f{k^{\mu}})}{\partial q^{\mu}}g_d\\
&=f\left(\frac{\partial g_a}{\partial q^{\mu}}+i^{\mu}\frac{\partial g_b}{\partial q^{\mu}}+j^{\mu}\frac{\partial g_c}{\partial q^{\mu}}+k^{\mu}\frac{\partial g_d}{\partial q^{\mu}}\right)+\frac{\partial f}{\partial q^{\mu}}g_a+\frac{\partial (f{i^{\mu}})}{\partial q^{\mu}}g_b+\frac{\partial (f{j^{\mu}})}{\partial q^{\mu}}g_c+\frac{\partial (f{k^{\mu}})}{\partial q^{\mu}}g_d\\
&=f\frac{\partial g}{\partial q^{\mu}}
+\frac{\partial f}{\partial q^{\mu}}g_a+\frac{\partial (f{i^{\mu}})}{\partial q^{\mu}}g_b+\frac{\partial (f{j^{\mu}})}{\partial q^{\mu}}g_c+\frac{\partial (f{k^{\mu}})}{\partial q^{\mu}}g_d
\end{split}
\end{equation}
Next, by using the result $\frac{\partial (f \eta )}{\partial q^{\mu}}=\frac{\partial f}{\partial q^{\eta\mu}}\eta$ in \eqref{pr:leftghrpr1}, we have
\begin{equation}\label{eq:prfdfgimujmu}
\begin{split}
&\frac{\partial (fg)}{\partial q^{\mu}}=f\frac{\partial g}{\partial q^{\mu}}
+\frac{\partial f}{\partial q^{\mu}}g_a+\frac{\partial f}{\partial q^{\mu i}}i^{\mu}g_b+\frac{\partial f}{\partial q^{\mu j}}j^{\mu}g_c+\frac{\partial f}{\partial q^{\mu k}}k^{\mu}g_d\\
&=f\frac{\partial g}{\partial q^{\mu}}
+\frac{1}{4}\left(\frac{\partial f}{\partial q_a}-\frac{\partial f}{\partial q_b}i^{\mu}-\frac{\partial f}{\partial q_c}j^{\mu}-\frac{\partial f}{\partial q_d}k^{\mu}\right)g_a+\frac{1}{4}\left(\frac{\partial f}{\partial q_a}-\frac{\partial f}{\partial q_b}i^{\mu}+\frac{\partial f}{\partial q_c}j^{\mu}+\frac{\partial f}{\partial q_d}k^{\mu}\right)i^{\mu}g_b\\
&\quad +\frac{1}{4}\left(\frac{\partial f}{\partial q_a}+\frac{\partial f}{\partial q_b}i^{\mu}-\frac{\partial f}{\partial q_c}j^{\mu}+\frac{\partial f}{\partial q_d}k^{\mu}\right)j^{\mu}g_c+\frac{1}{4}\left(\frac{\partial f}{\partial q_a}+\frac{\partial f}{\partial q_b}i^{\mu}+\frac{\partial f}{\partial q_c}j^{\mu}-\frac{\partial f}{\partial q_d}k^{\mu}\right)k^{\mu}g_d
\end{split}
\end{equation}
where the Definition \ref{def:leftghr} and \eqref{pr:def1qmunu} were used in the last equality above. Grouping
$\frac{\partial f}{\partial q_a}$, $\frac{\partial f}{\partial q_b}$, $\frac{\partial f}{\partial q_c}$ and $\frac{\partial f}{\partial q_d}$ in \eqref{eq:prfdfgimujmu} yields
\begin{equation}
\begin{split}
\frac{\partial (fg)}{\partial q^{\mu}}&=f\frac{\partial g}{\partial q^{\mu}}+\frac{1}{4}\frac{\partial f}{\partial q_a}\left(g_a+i^{\mu}g_b+j^{\mu}g_c+k^{\mu}g_d\right)-\frac{1}{4}\frac{\partial f}{\partial q_b}\left(g_a+i^{\mu}g_b+j^{\mu}g_c+k^{\mu}g_d\right)i^{\mu}\\
&\quad-\frac{1}{4}\frac{\partial f}{\partial q_c}\left(g_a+i^{\mu}g_b+j^{\mu}g_c+k^{\mu}g_d\right)j^{\mu}-\frac{1}{4}\frac{\partial f}{\partial q_d}\left(g_a+i^{\mu}g_b+j^{\mu}g_c+k^{\mu}g_d\right)k^{\mu}\\
&=f\frac{\partial g}{\partial q^{\mu}}
+\frac{1}{4}\left(\frac{\partial f}{\partial q_a}g-\frac{\partial f}{\partial q_b}gi^{\mu}-\frac{\partial f}{\partial q_c}gj^{\mu}-\frac{\partial f}{\partial q_d}gk^{\mu}\right)\\
&=f\frac{\partial g}{\partial q^{\mu}}
+\frac{1}{4}\left(\frac{\partial f}{\partial q_a}-\frac{\partial f}{\partial q_b}gi^{\mu}g^{-1}-\frac{\partial f}{\partial q_c}gj^{\mu}g^{-1}-\frac{\partial f}{\partial q_d}gk^{\mu}g^{-1}\right)g\\
&=f\frac{\partial g}{\partial q^{\mu}}
+\frac{1}{4}\left(\frac{\partial f}{\partial q_a}-\frac{\partial f}{\partial q_b}i^{g\mu}-\frac{\partial f}{\partial q_c}j^{g\mu}-\frac{\partial f}{\partial q_d}k^{g\mu}\right)g\\
&=f\frac{\partial g}{\partial q^{\mu}}+\frac{\partial f}{\partial q^{g\mu}}g
\end{split}
\end{equation}
where \eqref{pr:def1qmunu} was used in the second to last equality above, and Definition \ref{def:leftghr} was used in the last equality.
Hence, the first part of the theorem follows, and the second part can be proved in an analogous manner.
\end{proof*}

\subsection*{Appendix C: The Proof of The Chain Rule.}
To prove the chain rule, the following lemma shall be used.
\begin{lemma}\label{lem:qvfchainrl}
Let $q=q_a+iq_b+jq_c+kq_d$, where $q_a,q_b,q_c,q_d\in\mathbb{R}$, then the partial derivatives of the quaternion-valued composite function $f(g(q))$ satisfy the following chain rule
\begin{equation*}
\begin{split}
&\frac{\partial f(g(q))}{\partial \xi}=\frac{\partial f}{\partial g^{\nu}}\frac{\partial g^{\nu}}{\partial \xi}+\frac{\partial f}{\partial g^{ \nu i}}\frac{\partial g^{\nu i}}{\partial \xi}+\frac{\partial f}{\partial g^{\nu j}}\frac{\partial g^{\nu j}}{\partial \xi}+\frac{\partial f}{\partial g^{\nu k}}\frac{\partial g^{\nu k}}{\partial \xi}\\
&\frac{\partial f(g(q))}{\partial \xi}=\frac{\partial f}{\partial g^{\nu*}}\frac{\partial g^{\nu*}}{\partial \xi}+\frac{\partial f}{\partial g^{ \nu i*}}\frac{\partial g^{\nu i*}}{\partial \xi}+\frac{\partial f}{\partial g^{\nu j*}}\frac{\partial g^{\nu j*}}{\partial \xi}+\frac{\partial f}{\partial g^{\nu k*}}\frac{\partial g^{\nu k*}}{\partial \xi}
\end{split}
\end{equation*}
where $\xi\in\{q_a,q_b,q_c,q_d\}$ and $\nu\in\mathbb{H},\nu \neq 0$.
\end{lemma}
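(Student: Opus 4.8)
The plan is to reduce the statement to the ordinary real multivariable chain rule and then repackage the result in the $\nu$-basis using the defining differential identity of the GHR derivative. Since $\mathbb{H}\cong\mathbb{R}^4$, I would regard $f$ as a function of the four real components $g_a,g_b,g_c,g_d$ of $g$, and regard each $g_s$ as a real-valued function of $\xi\in\{q_a,q_b,q_c,q_d\}$ through $g=g(q)$. The classical chain rule then gives
\begin{equation*}
\frac{\partial f(g(q))}{\partial\xi}=\sum_{s\in\{a,b,c,d\}}\frac{\partial f}{\partial g_s}\frac{\partial g_s}{\partial\xi},
\end{equation*}
where each $\frac{\partial g_s}{\partial\xi}$ is real, so the order of the factors is immaterial; this is the only place where genuine differentiation enters.

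The heart of the argument is to rewrite this sum in the $\nu$-basis. First I would establish the GHR analogue of the fundamental differential identity \eqref{eq:case1dfdqcmp}, namely that for any nonzero $\nu\in\mathbb{H}$,
\begin{equation*}
df=\frac{\partial f}{\partial g^{\nu}}dg^{\nu}+\frac{\partial f}{\partial g^{\nu i}}dg^{\nu i}+\frac{\partial f}{\partial g^{\nu j}}dg^{\nu j}+\frac{\partial f}{\partial g^{\nu k}}dg^{\nu k}.
\end{equation*}
This is proved exactly as in Appendix A, but with the orthogonal basis $\{1,i^{\nu},j^{\nu},k^{\nu}\}$ in place of $\{1,i,j,k\}$: starting from the real-component expansion \eqref{eq:case1df}, one expands $dg^{\nu},dg^{\nu i},dg^{\nu j},dg^{\nu k}$ through \eqref{eq:qijkmurot}, collects the coefficients of $dg_a,dg_b,dg_c,dg_d$, matches them against Definition \ref{def:leftghr} (using $q^{\mu\eta}=(q^{\eta})^{\mu}$ from \eqref{pr:def1qmunu} to identify $i^{\nu i}=i^{\nu}$, $j^{\nu i}=-j^{\nu}$, and so on), and invokes Lemma \ref{lm:dpindp} to confirm the coefficients are uniquely determined. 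Once this identity is in hand, I would substitute $g=g(q)$ and let only $\xi$ vary: each $g^{\nu\eta}$ becomes a quaternion-valued function of $\xi$, so $dg^{\nu\eta}=\frac{\partial g^{\nu\eta}}{\partial\xi}\,d\xi$ while $df=\frac{\partial f(g(q))}{\partial\xi}\,d\xi$. Because $d\xi$ is a real scalar standing on the right of every term, it cancels, yielding the first asserted identity.

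The main obstacle is the bookkeeping in establishing the $\nu$-basis differential identity, since the paper records only the $\nu=1$ case \eqref{eq:case1dfdqcmp} explicitly. Noncommutativity must be respected throughout: the coefficients $\frac{\partial f}{\partial g_s}$ stand on the left while the rotated units $i^{\nu},j^{\nu},k^{\nu}$ stand on the right, and conflating the two would corrupt the identification with Definition \ref{def:leftghr}. The second (conjugate) formula is obtained by the identical argument applied to the conjugate differential identity built from $\{g^{\nu*},g^{\nu i*},g^{\nu j*},g^{\nu k*}\}$ and Lemma \ref{lm:dpconjindp}; no new ideas are required.
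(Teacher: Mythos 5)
Your proposal is correct, and at its core it performs the same reduction as the paper: both arguments start from the real four-variable chain rule \eqref{eq:profdfdxi} applied to the components $g_a,g_b,g_c,g_d$, and both rest on the same two algebraic ingredients, namely the linear relations between $\{g_a,g_b,g_c,g_d\}$ and $\{g^{\nu},g^{\nu i},g^{\nu j},g^{\nu k}\}$ and the composition property \eqref{pr:def1qmunu} (giving $i^{\nu i}=i^{\nu}$, $j^{\nu i}=-j^{\nu}$, etc., so that the collected coefficients really are the GHR derivatives of Definition \ref{def:leftghr}). Where you differ is in the organization. The paper works directly with partial derivatives: it inverts Definition \ref{def:leftghr} to write $\partial f/\partial g_a,\dots,\partial f/\partial g_d$ in terms of the four GHR derivatives in \eqref{eq:profdfqabc}, writes $\partial g_a/\partial\xi,\dots,\partial g_d/\partial\xi$ in terms of $\partial g^{\nu\eta}/\partial\xi$ in \eqref{eq:prfdqabcxi}, then substitutes both into the chain rule and regroups. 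You instead first establish the $\nu$-basis first-order differential identity $df=\sum_{\eta\in\{1,i,j,k\}}\frac{\partial f}{\partial g^{\nu\eta}}\,dg^{\nu\eta}$ --- the GHR analogue of \eqref{eq:case1dfdqcmp}, which the paper explicitly declines to write out ``to save the space'' --- and then obtain the lemma by pulling the differentials back along $g(q)$ and cancelling the real scalar $d\xi$ on the right. The two computations are term-for-term equivalent, but your route buys a reusable intermediate statement (the differential form of the left GHR calculus for arbitrary $\nu$) and makes the composition step a one-line specialization, at the cost of proving that identity first; the paper's route stays entirely inside already-established material (Definition \ref{def:leftghr} and \eqref{eq:z1qlink}) and keeps the proof of the lemma self-contained. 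The one point in your version that deserves explicit mention is that the $\nu$-basis identity is derived with $g$ a free quaternion variable, and its persistence after substituting $g=g(q)$ is exactly the invariance of the first differential; this is legitimate here because, as you note, it is underwritten by the real chain rule, which is the only analytic input in either proof.
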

\begin{proof}
Let $g(q)=g_a+ig_b+jg_c+kg_d$, where $g_a,g_b,g_c,g_d\in \mathbb{R}$. Then, the function $f(g(q))$ can be seen as a function of the four real-valued variables $g_a,g_b,g_c$ and $g_d$, and the partial derivative of $f(g(q))$ can be expressed as
\begin{equation}\label{eq:profdfdxi}
\frac{\partial f}{\partial \xi}=\frac{\partial f}{\partial g_a}\frac{\partial g_a}{\partial \xi}+\frac{\partial f}{\partial g_b}\frac{\partial g_b}{\partial \xi}+\frac{\partial f}{\partial g_c}\frac{\partial g_c}{\partial \xi}+\frac{\partial f}{\partial g_d}\frac{\partial g_d}{\partial \xi}
\end{equation}
By Definition \ref{def:leftghr}, the partial derivatives $\frac{\partial f}{\partial g_a}$, $\frac{\partial f}{\partial g_b}$, $\frac{\partial f}{\partial g_c}$ and $\frac{\partial f}{\partial g_d}$ are given by
\begin{equation}\label{eq:profdfqabc}
\begin{split}
&\frac{\partial f}{\partial g_a}=\frac{\partial f}{\partial g^{\nu}}+\frac{\partial f}{\partial g^{\nu i}}+\frac{\partial f}{\partial g^{\nu j}}+\frac{\partial f}{\partial g^{\nu k}},\quad \frac{\partial f}{\partial g_b}=\left(\frac{\partial f}{\partial g^{\nu}}+\frac{\partial f}{\partial g^{\nu i}}-\frac{\partial f}{\partial g^{\nu j}}-\frac{\partial f}{\partial g^{\nu k}}\right)i^{\nu}\\
&\frac{\partial f}{\partial g_c}=\left(\frac{\partial f}{\partial g^{\nu}}-\frac{\partial f}{\partial g^{\nu i}}+\frac{\partial f}{\partial g^{\nu j}}-\frac{\partial f}{\partial g^{\nu k}}\right)j^{\nu},\quad \frac{\partial f}{\partial g_d}=\left(\frac{\partial f}{\partial g^{\nu}}-\frac{\partial f}{\partial g^{\nu i}}-\frac{\partial f}{\partial g^{\nu j}}+\frac{\partial f}{\partial g^{\nu k}}\right)k^{\nu}\\
\end{split}
\end{equation}
By applying the quaternion rotation transform $(\cdot)^{\nu}$ to both sides of \eqref{eq:z1qlink} and replacing $q$ with $g$, the real-valued components $g_a,g_b,g_c$ and $g_d$ can be expressed as
\begin{equation}
\begin{split}
&g_a=\frac{1}{4}\left(g^{\nu}+g^{\nu i}+g^{\nu j}+g^{\nu k}\right),\quad g_b=-\frac{i^{\nu}}{4}\left(g^{\nu}+g^{\nu i}-g^{\nu j}-g^{\nu k}\right)\\
&g_c=-\frac{j^{\nu}}{4}\left(g^{\nu}-g^{\nu i}+g^{\nu j}-g^{\nu k}\right),\quad g_d=-\frac{k^{\nu}}{4}\left(g^{\nu}-g^{\nu i}-g^{\nu j}+g^{\nu k}\right)
\end{split}
\end{equation}
Taking the partial derivative  of both sides of the above equations yields
\begin{equation}\label{eq:prfdqabcxi}
\begin{split}
&\frac{\partial g_a}{\partial \xi}=\frac{1}{4}\left(\frac{\partial g^{\nu}}{\partial \xi}+\frac{\partial g^{\nu i}}{\partial \xi}+\frac{\partial g^{\nu j}}{\partial \xi}+\frac{\partial g^{\nu k}}{\partial \xi}\right),\quad \frac{\partial g_b}{\partial \xi}=-\frac{i^{\nu}}{4}\left(\frac{\partial g^{\nu}}{\partial \xi}+\frac{\partial g^{\nu i}}{\partial \xi}-\frac{\partial g^{\nu j}}{\partial \xi}-\frac{\partial g^{\nu k}}{\partial \xi}\right)\\
&\frac{\partial g_c}{\partial \xi}=-\frac{j^{\nu}}{4}\left(\frac{\partial g^{\nu}}{\partial \xi}-\frac{\partial g^{\nu i}}{\partial \xi}+\frac{\partial g^{\nu j}}{\partial \xi}-\frac{\partial g^{\nu k}}{\partial \xi}\right),\quad \frac{\partial g_d}{\partial \xi}=-\frac{k^{\nu}}{4}\left(\frac{\partial g^{\nu}}{\partial \xi}-\frac{\partial g^{\nu i}}{\partial \xi}-\frac{\partial g^{\nu j}}{\partial \xi}+\frac{\partial g^{\nu k}}{\partial \xi}\right)
\end{split}
\end{equation}
By substituting the results from \eqref{eq:profdfqabc} and \eqref{eq:prfdqabcxi} into \eqref{eq:profdfdxi},
and using the distributive law and merging of similar items, a simple result follows
\begin{equation}
\frac{\partial f(g(q))}{\partial \xi}=\frac{\partial f}{\partial g^{\nu}}\frac{\partial g^{\nu}}{\partial \xi}+\frac{\partial f}{\partial g^{ \nu i}}\frac{\partial g^{\nu i}}{\partial \xi}+\frac{\partial f}{\partial g^{\nu j}}\frac{\partial g^{\nu j}}{\partial \xi}+\frac{\partial f}{\partial g^{\nu k}}\frac{\partial g^{\nu k}}{\partial \xi}
\end{equation}
Hence, the first equality of the lemma follows, the second equality can be derived in a similar fashion.
\end{proof}

\begin{proof*}[of Theorem~{\rm\ref{thm:nwpleftchainrl}}]
By using Definition \ref{def:leftghr}, the left HR derivative of the product $fg$ can be expressed as
\begin{equation}\label{eq:dfgqchain}
\begin{split}
&\frac{\partial f(g(q))}{\partial q^{\mu}}=\frac{1}{4}\left(\frac{\partial f(g(q))}{\partial q_a}-\frac{\partial f(g(q))}{\partial q_b}i^{\mu}-\frac{\partial f(g(q))}{\partial q_c}j^{\mu}-\frac{\partial f(g(q))}{\partial q_d}k^{\mu}\right)
\end{split}
\end{equation}
By substituting the first equality of Lemma \ref{lem:qvfchainrl} into \eqref{eq:dfgqchain}, it follows that
\begin{equation}\label{eq:profdfgqqmu}
\begin{split}
\frac{\partial f(g(q))}{\partial q^{\mu}}=&\frac{1}{4}\left(\frac{\partial f}{\partial g^{\nu}}\frac{\partial g^{\nu}}{\partial q_a}+\frac{\partial f}{\partial g^{\nu i}}\frac{\partial g^{\nu i}}{\partial q_a}+\frac{\partial f}{\partial g^{\nu j}}\frac{\partial g^{\nu j}}{\partial q_a}+\frac{\partial f}{\partial g^{\nu k}}\frac{\partial g^{\nu k}}{\partial q_a}\right)\\
&-\frac{1}{4}\left(\frac{\partial f}{\partial g^{\nu}}\frac{\partial g^{\nu}}{\partial q_b}+\frac{\partial f}{\partial g^{\nu i}}\frac{\partial g^{\nu i}}{\partial q_b}+\frac{\partial f}{\partial g^{\nu j}}\frac{\partial g^{\nu j}}{\partial q_b}+\frac{\partial f}{\partial g^{\nu k}}\frac{\partial g^{\nu k}}{\partial q_b}\right)i^{\mu}\\
&-\frac{1}{4}\left(\frac{\partial f}{\partial g^{\nu}}\frac{\partial g^{\nu}}{\partial q_c}+\frac{\partial f}{\partial g^{\nu i}}\frac{\partial g^{\nu i}}{\partial q_c}+\frac{\partial f}{\partial g^{\nu j}}\frac{\partial g^{\nu j}}{\partial q_c}+\frac{\partial f}{\partial g^{\nu k}}\frac{\partial g^{\nu k}}{\partial q_c}\right)j^{\mu}\\
&-\frac{1}{4}\left(\frac{\partial f}{\partial g^{\nu}}\frac{\partial g^{\nu}}{\partial q_d}+\frac{\partial f}{\partial g^{\nu i}}\frac{\partial g^{\nu i}}{\partial q_d}+\frac{\partial f}{\partial g^{\nu j}}\frac{\partial g^{\nu j}}{\partial q_d}+\frac{\partial f}{\partial g^{\nu k}}\frac{\partial g^{\nu k}}{\partial q_d}\right)k^{\mu}
\end{split}
\end{equation}
Grouping around the terms $\frac{\partial f}{\partial g^{\nu}}$, $\frac{\partial f}{\partial g^{\nu i}}$, $\frac{\partial f}{\partial g^{\nu j}}$ and $\frac{\partial f}{\partial g^{\nu k}}$ in \eqref{eq:profdfgqqmu}, we have
\begin{equation}
\begin{split}
\frac{\partial f(g(q))}{\partial q^{\mu}}=&\frac{\partial f}{\partial g^{\nu}}\frac{1}{4}\left(\frac{\partial g^{\nu}}{\partial q_a}-\frac{\partial g^{\nu}}{\partial q_b}i^{\mu}-\frac{\partial g^{\nu}}{\partial q_c}j^{\mu}-\frac{\partial g^{\nu}}{\partial q_d}k^{\mu}\right)\\
&+\frac{\partial f}{\partial g^{\nu i}}\frac{1}{4}\left(\frac{\partial g^{\nu i}}{\partial q_a}-\frac{\partial g^{\nu i}}{\partial q_b}i^{\mu}-\frac{\partial g^{\nu i}}{\partial q_c}j^{\mu}-\frac{\partial g^{\nu i}}{\partial q_d}k^{\mu}\right)\\
&+\frac{\partial f}{\partial g^{\nu j}}\frac{1}{4}\left(\frac{\partial g^{\nu j}}{\partial q_a}-\frac{\partial g^{\nu j}}{\partial q_b}i^{\mu}-\frac{\partial g^{\nu j}}{\partial q_c}j^{\mu}-\frac{\partial g^{\nu j}}{\partial q_d}k^{\mu}\right)\\
&+\frac{\partial f}{\partial g^{\nu k}}\frac{1}{4}\left(\frac{\partial g^{\nu k}}{\partial q_a}-\frac{\partial g^{\nu k}}{\partial q_b}i^{\mu}-\frac{\partial g^{\nu k}}{\partial q_c}j^{\mu}-\frac{\partial g^{\nu k}}{\partial q_d}k^{\mu}\right)\\
=&\frac{\partial f}{\partial g^{\nu}}\frac{\partial g^{\nu}}{\partial q^{\mu}}+\frac{\partial f}{\partial g^{\nu i}}\frac{\partial g^{\nu i}}{\partial q^{\mu}}+\frac{\partial f}{\partial g^{\nu j}}\frac{\partial g^{\nu j}}{\partial q^{\mu}}+\frac{\partial f}{\partial g^{\nu k}}\frac{\partial g^{\nu k}}{\partial q^{\mu}}
\end{split}
\end{equation}
Hence, the first equality of the theorem follows, the other equalities can be derived in a similar fashion.
\end{proof*}

\subsection*{Appendix D: Fundamental Results Based on the GHR Derivatives.}
Several of the most important results of left GHR derivatives are summarized in Table \ref{tb:fmlleftderiv} and are easy for the reader to locate, assuming $\nu$, $\omega$ and $\lambda$ to be constant quaternions, $q$ to be a quaternion-valued variable, and $\mu$ to be any quaternion constants or functions. To show how to use Table \ref{tb:fmlleftderiv}, the following examples are presented.

\begin{table}[htbp]
\caption{Important results of the GHR derivatives}\label{tb:fmlleftderiv}
\renewcommand{\arraystretch}{1.2} 
\arrayrulewidth=1.0pt \tabcolsep=2.5pt
\begin{tabular}{cccc}
 \hline
$f(q)$    & $\frac{\partial f}{\partial q^{\mu}}{\mu}$ & $\frac{\partial f}{\partial q^{\mu*}}{\mu}$ & Note  \\[0pt]
 \hline
$q$  & $\mathfrak{R}(\mu)$  &$-\frac{1}{2}\mu^*$& $--$ \\[5pt]
$\omega q$  & $\omega\mathfrak{R}(\mu)$  &$-\frac{1}{2}\omega\mu^*$& $\forall\omega\in\mathbb{H}$ \\[5pt]
$q\nu$  & $\mathfrak{R}(\nu\mu)$  &$-\frac{1}{2}(\nu\mu)^*$&$\forall\nu\in\mathbb{H}$ \\[5pt]
$\omega q\nu+\lambda$  & $\omega\mathfrak{R}(\nu\mu)$  &$-\frac{1}{2}\omega(\nu\mu)^*$&$\forall\omega,\nu,\lambda\in\mathbb{H}$ \\[5pt]
 \hdashline[1pt/1pt]
$q^*$  & $-\frac{1}{2}\mu^*$  &$\mathfrak{R}(\mu)$ &$--$ \\[5pt]
$\omega q^*$  & $-\frac{1}{2}\omega\mu^*$  &$\omega\mathfrak{R}(\mu)$ &$\forall\omega\in\mathbb{H}$ \\[5pt]
$q^*\nu$  & $-\frac{1}{2}(\nu\mu)^*$  &$\mathfrak{R}(\nu\mu)$& $\forall\nu\in\mathbb{H}$ \\[5pt]
$\omega q^*\nu+\lambda$  & $-\frac{1}{2}\omega(\nu\mu)^*$  &$\omega\mathfrak{R}(\nu\mu)$& $\forall\omega,\nu,\lambda\in\mathbb{H}$ \\[5pt]
 \hdashline[1pt/1pt]
 $\alpha=|V_q|$  & $-\frac{1}{4}\mu \hat{q}$  &$\frac{1}{4}\mu \hat{q}$ &$V_q=\mathfrak{I}(q)$  \\[5pt]
 $\hat{q}=\frac{V_q}{|V_q|}$  & $\frac{2\mu^*+\mu-\mu^{\hat{q}}}{4\alpha}$  &$-\frac{2\mu^*+\mu-\mu^{\hat{q}}}{4\alpha}$ &$\alpha=|V_q|$ \\[5pt]
 $\arctan\left(\frac{|V_q|}{S_q}\right)$  & $-\frac{\mu\hat{q}q^*}{4|q|^2}$  &$\frac{\mu\hat{q}q}{4|q|^2}$ &$S_q=\mathfrak{R}(q)$ \\[5pt]
 \hdashline[1pt/1pt]
 $q^{-1}$  & $-q^{-1}\mathfrak{R}(q^{-1}\mu)$  &$\frac{1}{2}q^{-1}\mu^*(q^*)^{-1}$ &$--$ \\[5pt]
$(q^*)^{-1}$  & $\frac{1}{2}(q^*)^{-1}\mu^*q^{-1}$  &$-(q^*)^{-1}\mathfrak{R}((q^*)^{-1}\mu)$ &$--$ \\[5pt]
$(\omega q \nu+\lambda)^{-1}$  & $-f\omega\mathfrak{R}(\nu f\mu)$  &$\frac{1}{2}f\omega(\nu f\mu)^*$ &$\forall\omega,\nu,\lambda\in\mathbb{H}$ \\[5pt]
$(\omega q^*\nu+\lambda)^{-1}$  & $\frac{1}{2}f\omega(\nu f\mu)^*$  &$-f\omega\mathfrak{R}(\nu f\mu)$ & $\forall\omega,\nu,\lambda\in\mathbb{H}$ \\[5pt]
 \hdashline[1pt/1pt]
$q^2$  & $q\mathfrak{R}(\mu)+\mathfrak{R}( q\mu)$  &$-\frac{1}{2}q\mu^*-\frac{1}{2}( q\mu)^*$& $--$  \\[5pt]
$(q^*)^2$  & $-\frac{1}{2}q^*\mu^*-\frac{1}{2}(q^*\mu)^*$  &$q^*\mathfrak{R}(\mu)+\mathfrak{R}(q^*\mu)$ & $--$\\[5pt]
$(\omega q\nu+\lambda)^2$  & $g\omega\mathfrak{R}(\nu\mu)+\omega\mathfrak{R}(\nu g\mu)$  &$-\frac{1}{2}g\omega(\nu\mu)^*-\frac{1}{2}\omega(\nu g\mu)^*$& $g=\omega q\nu+\lambda$ \\[5pt]
$(\omega q^*\nu+\lambda)^2$  & $-\frac{1}{2}g\omega(\nu\mu)^*-\frac{1}{2}\omega(\nu g\mu)^*$  &$g\omega\mathfrak{R}(\nu\mu)+\omega\mathfrak{R}(\nu g\mu)$ & $g=\omega q^*\nu+\lambda$\\[5pt]
 \hdashline[1pt/1pt]
$\mathfrak{R}(q)$  & $\frac{1}{4}\mu$  &$\frac{1}{4}\mu$ & $--$\\[5pt]
$\mathfrak{R}(\omega q\nu+\lambda)$  & $\frac{1}{4}\mu\nu\omega$  &$\frac{1}{4}\mu\omega^*\nu^*$ & $\forall\omega,\nu,\lambda\in\mathbb{H}$\\[5pt]
$\mathfrak{R}(\omega q^*\nu+\lambda)$  & $\frac{1}{4}\mu\omega^*\nu^*$ & $\frac{1}{4}\mu\nu\omega$ & $\forall\omega,\nu,\lambda\in\mathbb{H}$\\[5pt]
 \hdashline[1pt/1pt]
$\frac{q}{|q|}$  & $\frac{1}{|q|}\mathfrak{R}(\mu)-\frac{1}{4|q|^3}q\mu q^*$  &$-\frac{1}{2|q|}\mu^*-\frac{1}{4|q|^3}q\mu q$ & $--$\\[5pt]
$\frac{q^*}{|q|}$  & $-\frac{1}{2|q|}\mu^*-\frac{1}{4|q|^3}q^*\mu q^*$  &$\frac{1}{|q|}\mathfrak{R}(\mu)-\frac{1}{4|q|^3}q^*\mu q$ & $--$\\[5pt]
$\frac{\omega q\nu+\lambda}{|\omega q\nu+\lambda|}$  & $\frac{\omega}{2|g|}\mathfrak{R}(\nu\mu)+\frac{g}{4|g|^3}\nu^*(\omega^*g\mu)^*$  &$-\frac{\omega}{4|g|}(\nu\mu)^*-\frac{g}{2|g|^3}\nu^*\mathfrak{R}(\omega^*g\mu)$& $g=\omega q\nu+\lambda$ \\[5pt]
$\frac{\omega q^*\nu+\lambda}{|\omega q^*\nu+\lambda|}$  & $-\frac{\omega}{2|g|}(\nu\mu)^*-\frac{f}{|g|}\frac{\partial |g|}{\partial q^{\mu}}\mu$  &$\frac{\omega}{|g|}\mathfrak{R}(\nu\mu)-\frac{f}{|g|}\frac{\partial |g|}{\partial q^{\mu*}}\mu$ & $g=\omega q^*\nu+\lambda$ \\[5pt]
 \hdashline[1pt/1pt]
$|q|$  & $\frac{1}{4|q|}\mu q^*$  &$\frac{1}{4|q|}\mu q$ & $--$\\[5pt]
$|q|^2$  & $\frac{1}{2}\mu q^*$  &$\frac{1}{2}\mu q$ & $--$\\[5pt]
$|\omega q\nu+\lambda|$  & $\frac{g^*}{2|g|}\omega\mathfrak{R}(\nu\mu)-\frac{1}{4|g|}\nu^*(\omega^*g\mu)^*$  &$-\frac{g^*}{4|g|}\omega(\nu\mu)^*+\frac{1}{2|g|}\nu^*\mathfrak{R}(\omega^*g\mu)$& $g=\omega q\nu+\lambda$ \\[5pt]
$|\omega q^*\nu+\lambda|$  & $\frac{g}{2|g|}\nu^*\mathfrak{R}(\omega^*\mu)-\frac{1}{4|g|}\omega(\nu g^*\mu)^*$  &$-\frac{g}{4|g|}\nu^*(\omega^*\mu)^*+\frac{1}{2|g|}\omega\mathfrak{R}(\nu g^*\mu)$& $g=\omega q^*\nu+\lambda$ \\[5pt]
$|\omega q\nu+\lambda|^2$  & $g^*\omega\mathfrak{R}(\nu\mu)-\frac{1}{2}\nu^*(\omega^*g\mu)^*$  &$-\frac{1}{2}g^*\omega(\nu\mu)^*+\nu^*\mathfrak{R}(\omega^*g\mu)$& $g=\omega q\nu+\lambda$ \\[5pt]
$|\omega q^*\nu+\lambda|^2$  & $g\nu^*\mathfrak{R}(\omega^*\mu)-\frac{1}{2}\omega(\nu g^*\mu)^*$  &$-\frac{1}{2}g\nu^*(\omega^*\mu)^*+\omega\mathfrak{R}(\nu g^*\mu)$& $g=\omega q^*\nu+\lambda$ \\[5pt]
\hline
\end{tabular}\\[6pt]
\end{table}

\begin{example}
Find the GHR derivative of the function $f:\mathbb{H}\rightarrow \mathbb{H}$ given by
\begin{equation}
f(q)=q=q_a+iq_b+jq_c+kq_d
\end{equation}
\end{example}
\textbf{Solution}: By using Definition \ref{def:leftghr}, it follows that
\begin{equation}\label{eq:dqdqmu}
\begin{split}
\frac{\partial q}{\partial q^{\mu}}\mu&=\frac{1}{4}\left(\frac{\partial q}{\partial q_a}
-\frac{\partial q}{\partial q_b}i^{\mu}-\frac{\partial q}{\partial q_c}j^{\mu}-\frac{\partial q}{\partial q_d}k^{\mu}\right){\mu}=\frac{1}{4}\left(1-ii^{\mu}-jj^{\mu}-kk^{\mu}\right){\mu}\\
&=\frac{1}{4}\left(\mu-i\mu i-j\mu j-k\mu k\right)=\frac{1}{4}\left(\mu+\mu^i+\mu^j+\mu^k\right)=\mathfrak{R}(\mu)
\end{split}
\end{equation}
In a similar manner, it can be shown that
\begin{equation}\label{eq:dqdqmuconj}
\begin{split}
\frac{\partial q}{\partial q^{\mu*}}\mu&=\frac{1}{4}\left(\frac{\partial q}{\partial q_a}
+\frac{\partial q}{\partial q_b}i^{\mu}+\frac{\partial q}{\partial q_c}j^{\mu}+\frac{\partial q}{\partial q_d}k^{\mu}\right){\mu}=\frac{1}{4}\left(1+ii^{\mu}+jj^{\mu}+kk^{\mu}\right){\mu}\\
&=\frac{1}{4}\left(\mu+i\mu i+j\mu j+k\mu k\right)=\frac{1}{4}\left(\mu-\mu^i-\mu^j-\mu^k\right)=-\frac{1}{2}\mu^*
\end{split}
\end{equation}

\begin{example}
Find the GHR derivative of the function $f:\mathbb{H}\rightarrow \mathbb{H}$ given by
\begin{equation}
f(q)=\omega q\nu+\lambda
\end{equation}
where $\nu,\omega,\lambda\in\mathbb{H}$ are constants.
\end{example}
\textbf{Solution}: Using \eqref{pr:leftghrpr1}, we have
\begin{equation}\label{eq:dwqvydqu}
\frac{\partial (\omega q\nu+\lambda)}{\partial q^{\mu}}{\mu}=\frac{\partial (\omega q\nu)}{\partial q^{\mu}}{\mu}+\frac{\partial \lambda}{\partial q^{\mu}}{\mu}=\omega\frac{\partial ( q\nu)}{\partial q^{\mu}}{\mu}=\omega\frac{\partial q}{\partial q^{\nu\mu}}\nu\mu=\omega\mathfrak{R}(\nu\mu)
\end{equation}
where \eqref{eq:dqdqmu} was used in the last equality. In a similar manner, it follows that
\begin{equation}\label{eq:dwqvdqnp2}
\frac{\partial (\omega q\nu+\lambda)}{\partial q^{\mu*}}{\mu}=\frac{\partial (\omega q\nu)}{\partial q^{\mu*}}{\mu}=\omega\frac{\partial ( q\nu)}{\partial q^{\mu*}}{\mu}=\omega\frac{\partial q}{\partial q^{\nu\mu*}}\nu\mu=-\frac{1}{2}\omega(\nu\mu)*
\end{equation}
where \eqref{eq:dqdqmuconj} was used in the last equality above.

\begin{example}
Find the GHR derivative of the function $f:\mathbb{H}\rightarrow \mathbb{H}$ given by
\begin{equation}
f(q)=(\omega q\nu+\lambda)^2
\end{equation}
where $\nu,\omega,\lambda\in\mathbb{H}$ are constants.
\end{example}
\textbf{Solution}: Set $g(q)=\omega q\nu+\lambda$, then $f(q)=g(q)g(q)$. By using the product rule in Theorem \ref{thm:nwpleftpdrl}, we have
\begin{equation}\label{eq:dwqvydqu3}
\frac{\partial f}{\partial q^{\mu}}=\frac{\partial (gg)}{\partial q^{\mu}}=g\frac{\partial g}{\partial q^{\mu}}+\frac{\partial g}{\partial q^{g\mu}}g=g\frac{\partial (\omega q\nu+\lambda)}{\partial q^{\mu}}+\frac{\partial (\omega q\nu+\lambda)}{\partial q^{g\mu}}g
\end{equation}
From \eqref{eq:dwqvydqu}, it follows that
\begin{equation}
\frac{\partial (\omega q\nu+\lambda)}{\partial q^{\mu}}\mu=\omega\mathfrak{R}(\nu\mu),\quad
\frac{\partial (\omega q\nu+\lambda)}{\partial q^{f\mu}}f\mu=\omega\mathfrak{R}(\nu f\mu)
\end{equation}
By substituting the above expressions into \eqref{eq:dwqvydqu3}, it is found that
\begin{equation}
\frac{\partial f}{\partial q^{\mu}}\mu=g\omega\mathfrak{R}(\nu\mu)+\omega\mathfrak{R}(\nu g\mu)
\end{equation}
The next result can be derived in an analogous manner
\begin{equation}
\begin{split}
\frac{\partial f}{\partial q^{\mu*}}&=\frac{\partial (gg)}{\partial q^{\mu*}}=g\frac{\partial g}{\partial q^{\mu*}}+\frac{\partial g}{\partial q^{g\mu*}}g=g\frac{\partial (\omega q\nu+\lambda)}{\partial q^{\mu*}}+\frac{\partial (\omega q\nu+\lambda)}{\partial q^{g\mu*}}g\\
&=g\frac{\partial (\omega q\nu+\lambda)}{\partial q^{\mu*}}\mu\mu^{-1}+\frac{\partial (\omega q\nu+\lambda)}{\partial q^{g\mu*}}g\mu\mu^{-1}=-\frac{1}{2}g\omega(\nu\mu)^*\mu^{-1}-\frac{1}{2}\omega(\nu g\mu)^*\mu^{-1}
\end{split}
\end{equation}
where \eqref{eq:dwqvdqnp2} was used in the last quality above. This is equivalent to
\begin{equation}
\frac{\partial f}{\partial q^{\mu*}}\mu=-\frac{1}{2}g\omega(\nu\mu)^*-\frac{1}{2}\omega(\nu g\mu)^*
\end{equation}

\begin{example}
Find the GHR derivative of the function $f:\mathbb{H}\rightarrow \mathbb{H}$ given by
\begin{equation}
f(q)=|\omega q\nu+\lambda|^2
\end{equation}
where $\nu,\omega,\lambda\in\mathbb{H}$ are constants.
\end{example}
\textbf{Solution}:  Set $g(q)=\omega q\nu+\lambda$, then $f(q)=g^*g$. By using the product rule in Theorem \ref{thm:nwpleftpdrl},
 the results $\frac{\partial (\omega q\nu+\lambda)}{\partial q^{\mu}}$ and $\frac{\partial (\omega q^*\nu+\lambda)}{\partial q^{\mu}}$ in Table \ref{tb:fmlleftderiv}, it follows
\begin{equation}
\begin{split}
\frac{\partial f}{\partial q^{\mu}}\mu&=\frac{\partial (g^*g)}{\partial q^{\mu}}\mu=g^*\frac{\partial g}{\partial q^{\mu}}\mu+\frac{\partial g^*}{\partial q^{g\mu}}g\mu=g^*\frac{\partial (\omega q\nu+\lambda)}{\partial q^{\mu}}\mu+\frac{\partial (\nu^*q^*\omega^*+\lambda^*)}{\partial q^{g\mu}}g\mu\\
&=g^*\omega\mathfrak{R}(\nu\mu)-\frac{1}{2}\nu^*(\omega^*g\mu)^*
\end{split}
\end{equation}
In a similar manner,
\begin{equation}
\begin{split}
\frac{\partial f}{\partial q^{\mu*}}\mu&=\frac{\partial (g^*g)}{\partial q^{\mu*}}\mu=g^*\frac{\partial g}{\partial q^{\mu*}}\mu+\frac{\partial g^*}{\partial q^{g\mu*}}g\mu=g^*\frac{\partial (\omega q\nu+\lambda)}{\partial q^{\mu*}}\mu+\frac{\partial (\nu^*q^*\omega^*+\lambda^*)}{\partial q^{g\mu*}}g\mu\\
&=-\frac{1}{2}g^*\omega(\nu\mu)^*+\nu^*\mathfrak{R}(\omega^*g\mu)
\end{split}
\end{equation}

\begin{example}
Find the GHR derivative of the function $f:\mathbb{H}\rightarrow \mathbb{H}$ given by
\begin{equation}
f(q)=\frac{1}{\omega q\nu +\lambda}
\end{equation}
\end{example}
\textbf{Solution}: By applying the derivative operator to both sides of $1=(\omega q\nu +\lambda)f(q)$, we have
\begin{equation}
\begin{split}
0&=\frac{\partial ((\omega q\nu +\lambda)f)}{\partial q^{\mu}}\mu=(\omega q\nu +\lambda)\frac{\partial f}{\partial q^{\mu}}\mu+\frac{\partial (\omega q\nu +\lambda)}{\partial q^{f\mu}}f\mu\\
&=(\omega q\nu +\lambda)\frac{\partial f}{\partial q^{\mu}}\mu+\omega\mathfrak{R}(\nu f\mu)
\end{split}
\end{equation}
This is equivalent to
\begin{equation}
\frac{\partial f}{\partial q^{\mu}}\mu=-(\omega q\nu +\lambda)^{-1}\omega\mathfrak{R}(\nu f\mu)=-f\omega\mathfrak{R}(\nu f\mu)
\end{equation}
In a similar manner,
\begin{equation}
\begin{split}
0&=\frac{\partial ((\omega q\nu +\lambda)f)}{\partial q^{\mu*}}\mu=(\omega q\nu +\lambda)\frac{\partial f}{\partial q^{\mu*}}\mu+\frac{\partial (\omega q\nu +\lambda)}{\partial q^{f\mu*}}f\mu\\
&=(\omega q\nu +\lambda)\frac{\partial f}{\partial q^{\mu*}}\mu-\frac{1}{2}\omega(\nu f\mu)^*
\end{split}
\end{equation}
which is equivalent to
\begin{equation}
\frac{\partial f}{\partial q^{\mu*}}\mu=\frac{1}{2}(\omega q\nu +\lambda)^{-1}\omega(\nu f\mu)^*=\frac{1}{2}f\omega(\nu f\mu)^*
\end{equation}

\begin{example}
Find the GHR derivative of the function $f:\mathbb{H}\rightarrow \mathbb{H}$ given by
\begin{equation}
f(q)=\frac{\omega q\nu+\lambda}{|\omega q\nu+\lambda|}
\end{equation}
where $\nu,\omega,\lambda\in\mathbb{H}$ are constants.
\end{example}
\textbf{Solution}: Set $g(q)=\omega q\nu+\lambda$, then $g(q)=f(q)|g(q)|$. By using the product rule in Corollary \ref{cor:leftprodrl} and the result $\frac{\partial |\omega q\nu+\lambda|}{\partial q^{\mu}}$ in Table \ref{tb:fmlleftderiv}, we have
\begin{equation}\label{eq:unitdwqvdq1}
\begin{split}
\frac{\partial g}{\partial q^{\mu}}\mu&=\frac{\partial (f|g|)}{\partial q^{\mu}}\mu=f\frac{\partial |g|}{\partial q^{\mu}}\mu+\frac{\partial f}{\partial q^{\mu}}|g|\mu=f\frac{\partial |\omega q\nu+\lambda|}{\partial q^{\mu}}\mu+\frac{\partial f}{\partial q^{\mu}}\mu|g|\\
&=f\frac{g^*}{2|g|}\omega\mathfrak{R}(\nu\mu)-f\frac{1}{4|g|}\nu^*(\omega^*g\mu)^*+\frac{\partial f}{\partial q^{\mu}}\mu|g|\\
\end{split}
\end{equation}
From the result $\frac{\partial (\omega q\nu+\lambda)}{\partial q^{\mu}}$ in Table \ref{tb:fmlleftderiv},
\begin{equation}\label{eq:unitdwqvdq1b}
\frac{\partial g}{\partial q^{\mu}}\mu=\frac{\partial (\omega q\nu+\lambda)}{\partial q^{\mu}}\mu=\omega\mathfrak{R}(\nu\mu)
\end{equation}
By combining \eqref{eq:unitdwqvdq1} and \eqref{eq:unitdwqvdq1b}, we arrive at
\begin{equation}
\begin{split}
\frac{\partial f}{\partial q^{\mu}}\mu&=\frac{1}{|g|}\omega\mathfrak{R}(\nu\mu)-f\frac{g^*}{2|g|^2}\omega\mathfrak{R}(\nu\mu)+f\frac{1}{4|g|^2}\nu^*(\omega^*g\mu)^*\\
&=\frac{\omega}{|g|}\mathfrak{R}(\nu\mu)-\frac{\omega}{2|g|}\mathfrak{R}(\nu\mu)+\frac{f}{4|g|^2}\nu^*(\omega^*g\mu)^*=\frac{\omega}{2|g|}\mathfrak{R}(\nu\mu)+\frac{g}{4|g|^3}\nu^*(\omega^*g\mu)^*
\end{split}
\end{equation}
In a similar manner,
\begin{equation}
\begin{split}
&-\frac{1}{2}\omega(\nu\mu)^*=\frac{\partial (\omega q\nu+\lambda)}{\partial q^{\mu*}}\mu=\frac{\partial g}{\partial q^{\mu*}}\mu=\frac{\partial (f|g|)}{\partial q^{\mu*}}\mu=f\frac{\partial |g|}{\partial q^{\mu*}}\mu+\frac{\partial f}{\partial q^{\mu*}}|g|\mu\\
&=f\frac{\partial |\omega q\nu+\lambda|}{\partial q^{\mu*}}\mu+\frac{\partial f}{\partial q^{\mu*}}\mu|g|=-f\frac{g^*}{4|g|}\omega(\nu\mu)^*+\frac{f}{2|g|}\nu^*\mathfrak{R}(\omega^*g\mu)+\frac{\partial f}{\partial q^{\mu*}}\mu|g|\\
\end{split}
\end{equation}
which is equivalent to
\begin{equation}
\begin{split}
\frac{\partial f}{\partial q^{\mu*}}\mu
&=-\frac{1}{2|g|}\omega(\nu\mu)^*+f\frac{g^*}{4|g|^2}\omega(\nu\mu)^*-\frac{f}{2|g|^2}\nu^*\mathfrak{R}(\omega^*g\mu)\\
&=-\frac{1}{4|g|}\omega(\nu\mu)^*-\frac{g}{2|g|^3}\nu^*\mathfrak{R}(\omega^*g\mu)\\
\end{split}
\end{equation}

\begin{example}
Find the GHR derivative of the function $f:\mathbb{H}\rightarrow \mathbb{H}$ given by
\begin{equation}
f(q)=\mathfrak{R}(\omega q\nu+\lambda)
\end{equation}
where $\nu,\omega,\lambda\in\mathbb{H}$ are constants.
\end{example}
\textbf{Solution}: Set $g(q)=\omega q\nu+\lambda$. Applying the derivative operator to both sides of $f=\frac{1}{2}g+\frac{1}{2}g^*$, and using the results $\frac{\partial (\omega q\nu+\lambda)}{\partial q^{\mu}}$ and $\frac{\partial (\omega q^*\nu+\lambda)}{\partial q^{\mu}}$ in Table \ref{tb:fmlleftderiv}, we have
\begin{equation}
\begin{split}
\frac{\partial f}{\partial q^{\mu}}\mu&=\frac{1}{2}\frac{\partial g}{\partial q^{\mu}}\mu+\frac{1}{2}\frac{\partial g^*}{\partial q^{\mu}}\mu
=\frac{1}{2}\frac{\partial (\omega q\nu+\lambda)}{\partial q^{\mu}}\mu+\frac{1}{2}\frac{\partial (\nu^* q^*\omega^*+\lambda^*)}{\partial q^{\mu}}\mu\\
&=\frac{1}{2}\omega \mathfrak{R}(\nu\mu)-\frac{1}{4}\nu^*(\omega^*\mu)^*=\frac{1}{2}\left(\mathfrak{R}(\nu\mu)-\frac{1}{2}(\mu\nu)^*\right)\omega
=\frac{1}{4}\mu\nu\omega
\end{split}
\end{equation}
In a similar manner,
\begin{equation}
\begin{split}
\frac{\partial f}{\partial q^{\mu*}}\mu&=\frac{1}{2}\frac{\partial g}{\partial q^{\mu*}}\mu+\frac{1}{2}\frac{\partial g^*}{\partial q^{\mu*}}\mu
=\frac{1}{2}\frac{\partial (\omega q\nu+\lambda)}{\partial q^{\mu*}}\mu+\frac{1}{2}\frac{\partial (\nu^* q^*\omega^*+\lambda^*)}{\partial q^{\mu*}}\mu\\
&=-\frac{1}{4}\omega(\nu\mu)^*+\frac{1}{2}\nu^* \mathfrak{R}(\omega^*\mu)=\frac{1}{2}\left(-\frac{1}{2}\omega\mu^*+\mathfrak{R}(\omega^*\mu)\right)\nu^*=\frac{1}{4}\mu\omega^*\nu^*
\end{split}
\end{equation}

\affiliationone{
Dongpo Xu\\
College of Science, Harbin Engineering University, Harbin 150001, China
   \email{dongpoxu@gmail.com}}
\affiliationtwo{
Cyrus Jahanchahi and Danilo P. Mandic\\
   Department of Electrical and Electronic Engineering, Imperial College London, SW7
2AZ London, UK.
   \email{cyrus.jahanchahi05@imperial.ac.uk\\
   d.mandic@imperial.ac.uk}}
\affiliationthree{%
Clive C. Took\\
Department of Computing, University of Surrey, GU2 7XH Surrey, UK.
   \email{c.cheongtook@surrey.ac.uk}}
\end{document}